\DeclareMathAlphabet\gothic{U}{euf}{m}{n}
\def\eqnarray{\stepcounter{equation}\let\@currentlabel=\theequation
\global\@eqnswtrue
\tabskip\@centering\let\\=\@eqncr
$$\halign to \displaywidth\bgroup\hfil\global\@eqcnt\z@
  $\displaystyle\tabskip\z@{##}$&\global\@eqcnt\@ne
  \hfil$\displaystyle{{}##{}}$\hfil
  &\global\@eqcnt\tw@ $\displaystyle{##}$\hfil
  \tabskip\@centering&\llap{##}\tabskip\z@\cr}
\def\endeqnarray{\@@eqncr\egroup
      \global\advance\c@equation\m@ne$$\global\@ignoretrue}
\def\@yeqncr{\@ifnextchar [{\@xeqncr}{\@xeqncr[5pt]}}
\newtheorem{theorem}{Theorem}[section]
\newtheorem{thm}[theorem]{Theorem}
\newtheorem{lem}[theorem]{Lemma}
\newtheorem{cor}[theorem]{Corollary}
\newtheorem{prop}[theorem]{Proposition}
\theoremstyle{definition}
\newtheorem{defn}[theorem]{Definition}
\newtheorem{assu}[theorem]{Assumption}
\newtheorem{rem}[theorem]{Remark}
\theoremstyle{remark}
\newcounter{teller}
\newcounter{tellerr}
\newcounter{tellerrr}
\newcounter{proofstep}
\newcommand{\dual}[2]{\langle{#1}\,|\,{#2}\rangle}
 \newcommand{\upp}[1]{{#1}^\bullet}
 \newcommand{\low}[1]{{#1}_\bullet}
\newcommand{\with}{\,:\,}
 \newcommand{\der}{\,\mathrm{d}}
\newcommand{\field}[1]{\mathbb{#1}}
\newcommand{\R}{\field{R}}
 \newcommand{\lrdual}[2]{\left\langle{#1}\,|\,{#2}\right\rangle}
\newcommand{\supp}{\mathop{\rm supp}}
 \newcommand{\Diri}{{D}}
 \newcommand{\Neumann}{\Gamma}
 \newcommand{\reaction}{r}
 \newcommand{\chempot}{\chi}
 \newcommand{\ie}{i.e.}
 \newcommand{\eg}{e.g.}
\begin{document}
\bibliographystyle{amsplain}

{\bf The 3D transient semiconductor equations with gradient-dependent
 and interfacial recombination}

\begin{center}
Karoline Disser and Joachim Rehberg
\end{center}

\begin{abstract} 
We establish the well-posedness of the transient van Roosbroeck 
system in three space dimensions under realistic assumptions on the data: non-smooth domains, discontinuous
coefficient functions and mixed boundary conditions. Moreover, within this analysis,
 recombination terms may be concentrated on surfaces and interfaces and may not only depend on charge-carrier densities, but also on the 
electric field and currents. In particular, this
includes Avalanche recombination. The proofs are based on recent abstract results on maximal parabolic and
optimal elliptic regularity of divergence-form operators.  
\end{abstract}

\emph{Key words and phrases:} van Roosbroeck's system, semiconductor device, Avalanche recombination, surface recombination, nonlinear parabolic system, heterogeneous material, discontinuous coefficients and data, mixed boundary conditions
\par
\emph{2010 Mathematics Subject Classification.} 35K57 (primary),  35K55, 78A35, 35R05, 35K45
\par
\emph{Acknowledgements.} The authors would like to thank Herbert Gajewski, Annegret Glitzky, Thomas Koprucki, Matthias Liero, Hagen Neidhardt and Marita Thomas for stimulating discussions and an ongoing exchange of ideas on this topic. K.D. was partially supported by the European Research 
Council via ``ERC-2010-AdG no. 267802 (Analysis of Multiscale Systems 
Driven by Functionals)'' and by the DFG International Research Training Group IRTG
1529. 

\tableofcontents

\vspace*{10mm}

\newpage

\section[Introduction]{Introduction}
\label{sec:intro}

In 1950, van~Roosbroeck \cite{roosbroeck50} established a system of
partial differential equations describing the dynamics of electron and
hole densities in a semiconductor device due to drift and diffusion within a
self-consistent electrical field.
%This consists of two current--continuity equations ---
%one for electrons, another one for holes --- which are coupled to a
%Poisson equation for the electrostatic potential, and which comprise 
%generation-recombination terms for the electrons and holes. 
In 1964, Gummel \cite{gummel64}
published the first report on the numerical solution of these
drift--diffusion equations for an operating semiconductor device. 
In the mathematical literature, there are now a number of related models and results. For excellent overviews, see \cite{jerome} or \cite{Mark} and references therein. Very active recent areas of research are, for example, the modelling and analysis of hydrodynamic models, active interfaces, e.g. in solar cells, and organic semiconductors, \cite{Glitzky, GL17, Gran, GS, JZ, HsiaoMarkWang, WuMarkZheng}.    
In real device simulation, drift-diffusion formulations and adaptive codes based on van Roosbroeck's system represent the state of the art, \cite{FKF, GLNS, sentaurus}. Regarding the numerics and analysis of these systems, we highlight three main difficulties:
\begin{itemize}
	\item The devices exhibit \emph{non-smoothness}, referring to non-smooth boundary regularity of their domains, inhomogeneous, mixed boundary conditions due to external contacts, and discountinuous material coefficients due to their heterogeneous, mostly layered, structure.
	\item The dynamics include \emph{nonlinearities} of high order, both in the expressions for the currents and for recombination, depending, for example, on the electric field itself rather than its potential. A highly relevant prototype is \emph{Avalanche recombination}.
	\item Some processes concentrate on or are active on lower-dimensional substructures only, like surfacial or \emph{interfacial recombination} due to material structure or impurities.  
	\end{itemize}
	The aim of this paper is to establish  a functional analytic setting for van Roosbroeck's system that allows us to simultaneously handle these aspects. It is tayolered exactly to the combination of a lack of regularity due to non-smoothness, and the need for regularity due to nonlinearity (we refer to a more detailed discussion in Section \ref{sec:nl-reform}). In particular, even though interfacial recombination in general prevents the existence of strong solutions, we can show well-posedness in a suitable norm and H\"older regularity of solutions, cf. Theorem \ref{mr}.
These results provide a strong basis for further numerical analysis, cf. for example the discussion in \ref{sec:nl-poisson}, for the modeling of more complex devices and coupled effects, and for future optimization and optimal control of the system.\\ 
%From
%that time on van~Roosbroeck's system became one model within a whole hierarchy of models for semiconductor devices.
%For an excellent overview on this see \cite{jerome} or \cite{Mark} and references therein.
%\JJJ Verhaelt es sich bis zum heutigen Tage so, dass fuer REALE  Halbleiter-Strukturen -- mehr oder weniger --
%immer noch nur VAn R. taugt?!? \EEE 
The first proof of global existence and uniqueness of
weak solutions for van Roosbroeck's system \emph{under realistic physical and geometrical conditions}
%was established in 1986 by 
is due to Gajewski and Gr{\"o}ger  
\cite{gajewski:groeger86,gajewski:groeger89}. It was shown that the solution
tends to thermodynamical equilibrium, if this is admitted by the boundary conditions. %are compatible.  
The key for proving these results is a Lyapunov functional.
%Although the analysis in \cite{gajewski:groeger89} is quite satisfactory, here and in all other
At least one serious drawback of these and related results is that only recombination terms are admissable which depend on the densities, and this mostly even under some
additional structural conditions, see %-- which include, of course, the Shock-Reed-Hall and the
%Auger recombination terms, see 
\cite[2.2.3]{gajewski93}, \cite[Ch.~6]{gajewski:groeger90},  \cite{skrypnik:02} and \cite{veiga}. The only exception seems to be the paper of Seidman \cite{Seid}, where
Avalanche generation -- also called \emph{impact ionization}, is included. However,  his analytic framework requires (generically) smooth geometries and necessarily excludes mixed boundary conditions, cf. \cite[Ch. 5]{Seid}, and interfacial recombination, which are essentially indispensable for real device modeling.\\
%It is the main aim of this paper to deliver a functional analytic concept which allows
%to treat also recombination terms which depend on the electrical field and the currents, with the 
%Avalanche generation as a highly relevant prototype.  We consider this as the most relevant advantage over all previous
On the other hand, Avalanche generation is the determining operating priniciple of both Avalanche diodes and 
Avalanche transistors, \cite{ebers, hamilt, spirit}, and it %Avalanche generation 
is of interest for modeling solar cells, see \cite{kolod, marti}. 
In the case of Avalanche generation, no energy functional for van Roosbroeck's system is known and, as is already observed in 
\cite{Seid}, methods based on maximum principles are not applicable. 
Thus, global existence cannot be expected (and may not be desirable) in such a general context, compare \cite{fila, kenned}, \cite[p. 55]{Marko}.\\
%, and see also 
%\cite{kenned} and the discussion in \cite[pg. 55]{Marko}.
Hence, our approach is different and rests on a reformulation of the system as the nonlocal quasilinear dynamics of the quasi Fermi levels, in an appropriate Banach space, cf. Section \ref{sec:nl-reform} and cf. \cite{vanroos2d} for a similar approach to the two-dimensional problem in an $L^p$-space without Avalanche recombination. We can then show well-posedness using maximal parabolic regularity of the linearized problem and the contraction mapping principle. Some special (mathematical) aspects of this approach are the following:
\begin{itemize}
	\item It includes a detailed analysis of the nonlinear Poisson equation specific to the system. This also gives rise to efficient numerical schemes, compare \cite{gajewski93} and the discussion in Subsection \ref{sec:nl-poisson}. 
	\item A quite elaborate choice of the underlying Banach space, providing the spatial regularity of rates a.e. in time, cf. Section \ref{sec:main}. In particular, spaces of types $L^p$ and $W^{-1,2}$ are excluded by non-smoothness, interfacial terms and nonlinearity, respectively, and spaces of type $W^{-1,p}$ are also not suitable. Our choice can be viewed as an adequate framework for the treatment
of generalized second-order quasilinear parabolic problems with nonsmooth data when including semilinear terms that depend on (powers of) gradients
of the unknowns.
	\item Many intricate properties of the non-smooth Poisson operators $-\mathrm{div} \, \mu \nabla \cdot $, entering in the equation for the electrostatic potential and the current fluxes,  are essential to the analysis and were achieved only recently (see e.g.
Proposition \ref{t-zwischen}  and references):
\begin{itemize}
	\item They provide topological isomorphims between the spaces $W^{1,q}_D(\Omega) $ and $W^{-1,q}_D(\Omega) $
with $q$ larger than the space dimension $3$, cf. Assumption \ref{assu:iso}. 
An assumption like this was already introduced in \cite{gajewski:groeger89} (compare \cite[Introduction]{veiga})
 as an ad hoc
 assumption in order to show uniqueness in case of Fermi-Dirac statistics,
 but is now substantially covered by \cite{disser}  in cases
of mixed boundary conditions and heterogeneous, \emph{layered} materials. Here, `layered' can be interpreted in a fairly broad sense that may cover many specific devices. 
\item They have maximal parabolic
regularity, even when considered on interpolation spaces of $W^{-1,q}$ and $L^q$, cf. Proposition \ref{t-zwischen}.
\item Even with varying coefficients due to the quasilinearity of the system, they have a (sufficiently regular) common domain of definition on these interpolation spaces, and the operator norm can be estimated suitably, cf. Lemma \ref{t-multipl}.
\item The domains of (suitable) fractional powers can be determined, due to the pioneering results of \cite{ausch}. In particular, it can be shown that they may embed into $W^{1,q}$.
\end{itemize} 
%$\bullet$ the occurring elliptic operators - in their 'quasilinear dependence' on the solution --  do all have the 
%same domain of definition when taking the functions entering in the 'quasilinearities' from a suitable
%interpolation space between the chosen Banach space and the (common) domain of definition. Moreover, the
%dependence of the operators with respect to these 'quasilinearities' is well-behaved, cf. Lemma \ref{t-multipl}
%below\\
%$\bullet$ the common domain of definition is 'sufficiently small', in other words: the elements of this are
%'sufficiently regular' -- despite the nonsmooth geometry of the domain and the mixed boundary conditions \\
%$\bullet$  Basing on the pioneering results of \cite{ausch}, one can determine the domain for (suitable) fractional 
%powers of the diffusion operators $-\nabla \cdot \mu_k \nabla$ explicitly. Having this at hand, the Banach space to consider the
% system in is chosen in a way that even the domains of these fractional
%powers embed into $W^{1,q}$. It is exactly this what enables us -- in correspondence with the 'abstract' parabolic
%theory --  to include recombination terms which indeed depend not only on the particle densities, but on the
%electric field and also on the currents, as is the case e.g. for Avalanche generation.\\
	\end{itemize}
Even with some technicalities in the functional analytic framework, we want to present a main result that is straightforwardly applicable to real devices. Thus, we have taken care to motivate and discuss the mathematical assumptions, using known results, examples, relevant physical quantities and additional figures. \\ 
The outline of this paper is as follows: In the next section, we introduce van Roosbroeck's model, including examples of expressions for bulk and surface recombination. In Section \ref{sec:rigorous}, we collect mathematical prerequisites. In particular, this includes assumptions and preliminary results associated to the non-smoothness of the setting and inhomogeneous data and to Avalanche recombination. In Section \ref{sec:nl-reform}, we  introduce and explain the functional analytic setting, analyse the nonlinear Poisson equation for the electrostatic potential given in terms of quasi Fermi levels, and deduce how the system can then be rewritten as a quasilinear abstract Cauchy problem. In Section \ref{sec:main}, we prove the main result on well-posedness and discuss regularity of solutions. 
%%%%%%%%%%%%%%%%%%%%%%%%%%%%%%%%%%%%%%%%%%%%%%%%%%%%%%%%%%%%%%%%%%%%%%%%%%%%%%%%%%%%%%%%%%%%%%%%%%%%%%%%%
%
\section[The van Roosbroeck system]{The van Roosbroeck system}
\label{sec:setting}
%
%%%%%%%%%%%%%%%%%%%%%%%%%%%%%%%%%%%%%%%%%%%%%%%%%%%%%%%%%%%%%%%%%%%%%%%%%%%%%%%%%%%%%%%%%%%%%%%%%%%%%%%%%%%%%
 In this section we introduce the van Roosbroeck system for modeling the transport of charges in semiconductor devices. 
Therein, the negative and positive charge carriers, electrons and holes, 
move by diffusion and drift in a self-consistent electrical field and on their 
way, due to various mechanisms, they may recombine to charge-neutral 
electron-hole pairs or, vice versa, negative and positive charge carriers 
may be generated from charge-neutral electron-hole pairs.  
\par 
The electronic state of the semiconductor device resulting from these phenomena  
is described by the triple $(u_1,u_2,\varphi)$ of {\bf unknowns} that consists of 
\begin{itemize}
\item 
the densities $u_1$ and $u_2$ of electrons and holes, and
\item 
the electrostatic potential $\varphi$.  
\end{itemize}
Moreover, further physical quantities associated with $(u_1,u_2,\varphi)$ are used to describe the state of the device: 
\begin{itemize}
\item 
the chemical potentials $\chi_1$ and $\chi_2$, 
\item
the quasi Fermi levels
$\Phi_1, \Phi_2$, and,
\item 
the electron and hole currents $j_1$ and $j_2$. 
\end{itemize} 
Their precise relations are given in Section \ref{relations}.    
\par 
Throughout this work we assume that the semiconductor device occupies a bounded domain $\Omega\subset\R^3$.  
Its boundary $\partial\Omega$ with outer unit normal $\nu,$ 
consists of a Dirichlet part $D\subset \partial\Omega$ and of a Neumann, resp.\ Robin part 
$\Gamma:=\partial\Omega\backslash D$. In addition, two-dimensional interfaces $\Pi \subset \Omega$ 
are taken into account, where additional recombination mechanisms may take place, triggered e.g.\ by material impurities. 
The precise mathematical assumptions on the geometry of these objects are collected in Assumption \ref{spatial}. 
The evolution of the charge carriers is monitored during a finite 
time interval $J=]0,T[$ with $T \in ]0, \infty[$. 
\par 
The {\bf van Roosbroeck system} \eqref{vanRoos}, defined on $J\times\Omega$, 
then consists of the {\bf Poisson equation \eqref{Poisson-eq}} and the {\bf current continuity equations \eqref{CuCo-eq}}:   
\begin{subequations}
\label{vanRoos}
\begin{equation}
\label{Poisson-eq}
\begin{aligned}[2]
\text{Poisson equation:}\hspace*{3cm}     -\mathrm{div} 
    \left( \varepsilon \nabla \varphi
    \right) 
    & =  {d} + u_1 - u_2 &
    \quad&
    \text{in} \, \Omega, \\
\varphi & = {\varphi}_D 
    &\quad& 
    \text{on }D, \\ 
   {\nu}\cdot{ 
      \left(
        \varepsilon \nabla \varphi
      \right)
    }
    + 
   \varepsilon_{\Gamma} \varphi 
   & ={\varphi}_\Gamma 
    &\quad&
    \text{on }\Gamma,
\end{aligned} 
\end{equation}
and with $k\in\{1,2\},$ $k=1$ for electrons and $k=2$ for holes, the 
\begin{equation}
\label{CuCo-eq}
\begin{aligned}[2]
\text{current-continuity equation:}\qquad
  \partial_t u_k - \text{div} j_k 
  &= r^\Omega  &\quad& \text{in } J\times (\Omega \setminus \Pi)\\
    \Phi_k(t) &=\Phi_k^D(t) 
    &\quad 
    &\text{on $\Diri$,} 
    \\   
{\nu}\cdot{j_k} 
    &= {r}^\Gamma 
    &\quad 
    &\text{on $\Neumann$,} 
%\; \text{where}\; \hat{r}^\Gamma\; \text{is recombination on} \;\Gamma, 
\\
    [{\nu}\cdot{j_k}] 
    &= {r}^ \Pi 
    &\quad 
    &\text{on $ \Pi$.} %\; \text{where}\; {r}^ H\; \text{is recombination on the surface} \; H,
\end{aligned}
\end{equation}
 The evolution starts from initial conditions $\Phi_k(0)=\Phi_{k,0}$.
\end{subequations}\\
The parameters in the Poisson equation are the dielectric permittivity $\varepsilon:\Omega\to\R^{3\times 3}$ and, 
on the right-hand side, the (prescribed) doping profile $d$. The latter is allowed to be located also on 
a two-dimensional surface in $\overline \Omega$ (cf. \cite{naz} \cite{drumm}), see our mathematical requirement on $d$ in Assumption 
\ref{assu:poi-diri-bv} below.
Moreover, in the corresponding boundary conditions,  
$\varepsilon_{\Gamma}:\Gamma\to[0,\infty)$ represents the capacity of the part of the corresponding
device surface, ${\varphi}_D$ and ${\varphi}_\Gamma$ are the voltages applied at 
the contacts of the device, and may, therefore depend on time. \\
From now on we denote the pair $(\Phi_1,\Phi_2)$ of quasi Fermi levels by $\Phi$.
Analogously, we always write $u$ for the pair of densities $(u_1,u_2)$.
\par 
The current-continuity equations feature the currents $j_k$ on their left-hand side and reaction or recombination terms 
$r^\Omega, r^\Gamma, r^\Pi$ on their right-hand side. Here, $r^\Omega$ acts in the bulk and, additionally, the Neumann conditions
in \eqref{CuCo-eq} balance the normal fluxes cross the exterior boundary $\Gamma$ with surface recombinations $r^\Gamma$
taking place on $\Gamma,$ resp.\ the jump of the normal fluxes $ [{\nu}\cdot{j_k}] $ across $\Pi$ with surface recombinations
${r}^ \Pi$ taking place on the surface $\Pi$. Details on $j_k$ and $r^\Omega$,  $r^\Gamma$,  $r^\Pi$ and in particular on their
dependence of the quantities $u$,$\varphi$, and $\Phi$ are given in Sections \ref{relations} and \ref{sec:reactions}. 

% 
% 

%%%%%%%%%%%%%%%%%%%%%%%%%%%%%%%%%%%%%%%%%%%%%%%%%%%%%%%%%%%%%%%%%%%%%%%%%%%%%%%%%%%%%
%
\subsection{Carrier densities and currents}
\label{relations}
%
%%%%%%%%%%%%%%%%%%%%%%%%%%%%%%%%%%%%%%%%%%%%%%%%%%%%%%%%%%%%%%%%%%%%%%%%%%%%%%%%%%%%%%%%%%%%
An essential modeling ingredient of van Roosbroeck's system is the relation of 
 the densities of electrons and holes with their chemical potentials. We assume
\begin{equation} \label{eq:densities}
  u_k(t,x) 
 =
  \mathcal{F}_k 
  \left(
    \chempot_k(t,x)
  \right)\, ,
   \quad  x \in \Omega 
  ,
  \qquad k=1,2,
\end{equation}

where the functions $\mathcal{F}_1$ and $\mathcal{F}_2$ represent the
statistical distribution of the electrons and holes in the energy band.  In general, Fermi--Dirac statistics applies, \ie\ 
\begin{equation}
  \label{eq:fermi-integral}
%{\bf Vorfaktoren??} \quad  
\mathcal{F}_k (s) 
  =
  \frac{2}{\sqrt{\pi}}  
  \int_0^\infty 
  \frac{\sqrt{t}}{1+e^(t-s)}
  \der{t}
  ,
  \qquad
  s\in \R
  ,
  \qquad k=1,2
  .
\end{equation}
Sometimes, Boltzmann statistics is a good approximation: 
\begin{equation}
  \label{eq:boltzmann}
  \mathcal{F}_k (s) 
  = e^s.
\end{equation}

As is common, we assume that the electron and hole current is driven by
the gradient of the quasi Fermi level of electrons $\Phi_1$
and holes $\Phi_2$, respectively. More precisely, the
currents are given by
\begin{equation}
  \label{eq:curr-dens}
  j_k(t,x) =  u_k(t,x) \mu_k(x) \, \nabla \Phi_k(t,x)\; , 
  \quad x \in \Omega
  ,
  \qquad k = 1,2, 
\end{equation}
where the  quasi Fermi levels $\Phi_k$ are related to the chemical potentials $\chi_k$ via
\begin{equation} \label{e-relation}
\chi_k=\Phi_k+(-1)^k\varphi, \quad k=1,2 
\end{equation}
Here, $\mu_k:\Omega\to\R^{3\times 3}$ are the mobility tensors for electrons and holes, respectively.
 We specify the mathematical prerequisites on the functions $\mathcal F_k$ in the following 
\begin{assu}
  \label{assu:distri}
 The functions $\mathcal F_k \colon
      \mathbb{R}
      \to 
      ]0,\infty[$, $k=1,2$
are twice continuously differentiable with $\mathcal F_k(s) \to +\infty$ as $s \to +\infty$. 
Moreover, their derivatives $\mathcal F_k'$ are bounded from above and below on bounded intervals by strictly positive constants.
\end{assu}
This includes Boltzmann statistics \eqref{eq:boltzmann}, as well as Fermi--Dirac statistics
  \eqref{eq:fermi-integral}, for the distribution functions. 
%%%%%%%%%%%%%%%%%%%%%%%%%%%%%%%%%%%%%%%%%%%%%%%%%%%%%%%%%%%%%%%%%%%%%%%%%%%%%%%%%%%%%%%%%%%%%%%%%%%%%
%
\subsection[Recombination terms]{Recombination terms}
\label{sec:reactions}
%
%%%%%%%%%%%%%%%%%%%%%%%%%%%%%%%%%%%%%%%%%%%%%%%%%%%%%%%%%%%%%%%%%%%%%%%%%%%%%%%%%%%%%%%%%%%%%%%%%%%%%%%%%%

The recombination term $r^\Omega$ on the right-hand side of the current--continuity
equations \eqref{CuCo-eq} can be given by rather general functions of the electrostatic potential, of the 
currents, and of the vector of electron/hole densities. It describes the production
of electrons and holes, respectively --- production or destruction, depending on the sign. Our formulation of the reaction
rates remains abstract, cf. Section \ref{sec:rigorous}, but in particular, it includes a variety of models %for the
 %recombination and generation of electron--hole pairs 
 for semiconductors. It covers non-radiative
recombination like the Shockley--Read--Hall recombination due to phonon 
transition and Auger recombination (three particle transition) as well as Avalanche generation,
%Generally spoken, modelling recombination of electron--hole pairs in semiconductor
%material is an art in itself, \cf\ \eg\ \cite{landsberg91}.
 %For illustration, we list these common recombination
%models, 
see \eg\ \cite{selberherr84,landsberg91,gajewski93} and the references
cited there.

\subsubsection{Bulk recombination}
A rather general model for many recombination terms, valid under \emph{any} statistics, is 
\begin{equation*} %\label{e-genrecomb}
r^\Omega (u_1, u_2, \Phi_1, \Phi_2) = \hat r(u_1,u_2)\bigl (g - \exp (\Phi_1+\Phi_2)\bigr ),
\end{equation*}
cf. \cite[Sect. 9.2]{bonc}. %\cite{gajewski:groeger89}
In case of \emph{Boltzmann statistics}, this includes the well-known Shockley--Read--Hall recombination (SRH)
and the Auger recombination (AUG):\\
%\begin{compactitem}
%\item 
(SRH) \emph{Shockley--Read--Hall recombination} :                             %
\begin{equation}\label{e-shockleyrh}
\reaction^\Omega_{SRH} 
                                %\equiv \reaction_2^{SRH}
  =
  \frac{u_1 u_2 - n_i^2}{\tau_2(u_1+n_1)+\tau_1(u_2+n_2)},
  \qquad
\end{equation}
where $n_i$ is the intrinsic carrier density, $n_1$, $n_2$ are
reference densities, and $\tau_1$, $\tau_2$ are the lifetimes of
electrons and holes, respectively.  $n_i$, $n_1$, $n_2$, and $\tau_1$,
$\tau_2$ are parameters of the semiconductor material.\\
% thus, depend on the space variable, and ultimately, also on time.\\
(AUG) 
\emph{Auger recombination} (three particle transitions):
\begin{equation} \label{e-auger}
\reaction^\Omega_{Auger} 
=
  (u_1 u_2 - n_i^2)(c_1^{Auger} u_1 + c_2^{Auger}u_2),
  \qquad
\end{equation}
where $c_1^{Auger}$ and $c_2^{Auger}$ are the Auger capture
coefficients of electrons and holes, respectively, in the
semiconductor material.\\
(AVA) An analytical expression for \emph{Avalanche generation} (impact ionization), valid at least in the cases of Silicon or Germanium, is 
%\begin{color}{blue}
\begin{equation} \label{e-0}
 r^\Omega_{Ava}(u,\varphi,\Phi)=c_n |j_n| \exp \Bigl ({\frac {-a_n}{|E \cdot \mathfrak j_{n}|}}\Bigr )
+c_p |j_p| \exp \Bigl ({\frac {-a_p}{|E \cdot \mathfrak j_{p}|}}\Bigr ),
\end{equation}
where $E=\nabla \varphi$ is the electrical field and $\mathfrak  j_{n,p}$ are the normalized currents
$\frac {j_{n,p}}{|j_{n,p}|}$ of the corresponding type. The parameters $a, c_{n,p}$
are given, see \cite[p. 111/112]{selberherr84} and references; in particular 
Tables 4.2-3/4.2-4, and see also  \cite[Ch. p. 17, p. 54/55]{Marko}.
%\begin{color}{green}  
%{\bf moeglicherweise ist dies 'graphisch' noch nicht optimal...}
%\EEE
%\end{compactitem}

% %%%%%%%%%%%%%%%%%%%%%%%%%%%%%%%%%%%%%%%%%%%%%%%%%%%%%%%%%%%%%%%%%%%%%%%%%%%%%%%%%%%%%%%%%%%%%%%%%%%%%%%%%%
\subsubsection{Surface recombination}
Our model also allows for surface recombination terms $r^\Gamma$ along 
an exterior (Neumann/Robin) part of the boundary and  $r^\Pi$ along interior, 2-dimensional surfaces $\Pi$,
cf. \cite[p. 110]{selberherr84} and references given there, see also \cite{skrypnik:02}.
Of course,  if $r^\Gamma \equiv 0$, then the semiconductor is isolated at $\Gamma$, i.e the current through
$\Gamma$ is zero.\\
The functional analytic requirements on the reaction terms are specified in
Subsection \ref{Reco}. A typical example of  surface recombination is  analogous to Shockley-Read-Hall, at gate contacts, 
\begin{equation*}
	r^\Gamma_{Surf}(u) =  \frac{u_1 u_2 - n_i^2}{v_2(u_1+n_1)+v_1(u_2+n_2)},
	\end{equation*} 
with additional parameters $v_1, v_2$.

%%%%%%%%%%%%%%%%%%%%%%%%%%%%%%%%%%%%%%%%%%%%%%%%%%%%%%%%%%%%%%%%%%%%%%%%%%%%%%%%%%%%%%%%%%%%%%%%%%%%%%%%%%%%%%%%%
%
\section[Prerequisites]{Mathematical prerequisites and assumptions}
\label{sec:rigorous}
%
%%%%%%%%%%%%%%%%%%%%%%%%%%%%%%%%%%%%%%%%%%%%%%%%%%%%%%%%%%%%%%%%%%%%%%%%%%%%%%%%%%%%%%%%%%%%%%%%%%%%%%%%%%%%%%%%%%%%%%
In this section, we introduce some mathematical terminology and state mathematical prerequisites for the analysis of the van Roosbroeck system \eqref{vanRoos}.
\par 
In particular, we have the following requirements on the domain $\Omega$ occupied by the device.  Figure \ref{fig-technoa} shows a typical example. 

\begin{figure}[htbp]
\centerline{
\includegraphics[height=2in]
{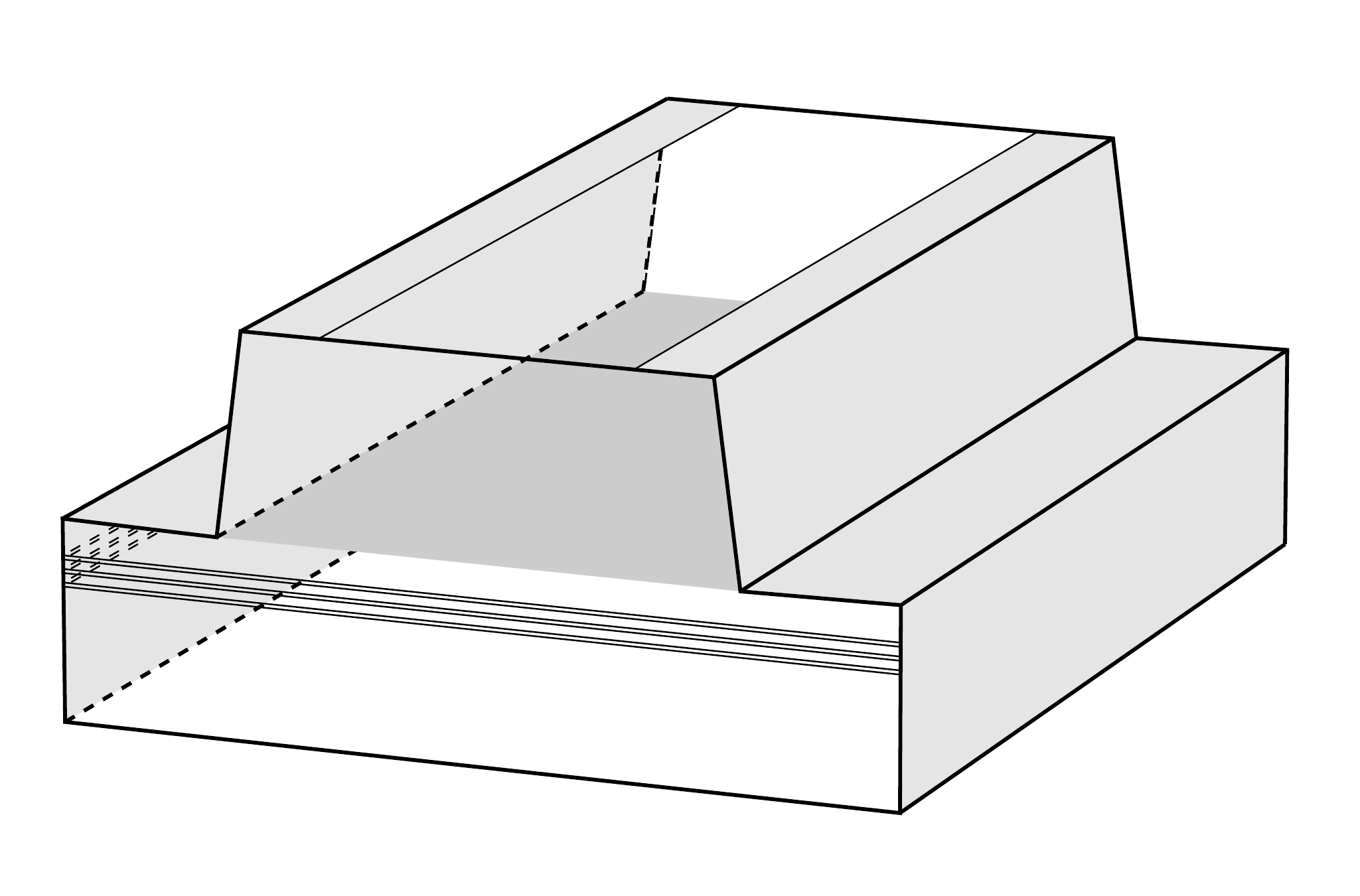}
}
\caption{\label{fig-technoa} Scheme of a ridge waveguide quantum well laser 
(detail 3.2$\mu m \times 1.5 \mu m \times 4\mu m$). The device domain has two material layers. The material
interface (darkly shaded) and the Neumann boundary part (lightly shaded) meet at an edge. At the bottom and the top of the structure, contacts give rise
to Dirichlet boundary conditions for the electrostatic potential. A triple quantum well
structure is indicated where the light beam forms in the symmetry plane of the domain.
}
\end{figure}

\begin{assu} \label{spatial}
The device under consideration occupies a bounded domain $\Omega\subset\R^3$. The
 boundary $\partial \Omega$ is decomposed into a Dirichlet boundary part $D$ and
 its complement $\Gamma:=\partial \Omega \setminus D$.
% is a bounded Lipschitz domain (cf.-- being different 
%from \emph{strong Lipschitz domain}, which is much more restrictive and in fact identical with 
%\emph{uniform cone domain}, see again~\cite[Ch.\ 1.1.9]{mazya}), compare also \cite[Ch. 1.2.2]{grisvard85}. \\
It holds that
\begin{itemize}
\item the Dirichlet boundary part $D$ is a $(d-1)$-set in the sense of Jonsson/Wallin, cf. \cite[Ch. II]{jons}), and that 
\item every point $x$ in the closure of $\overline \Gamma$ admits a Lipschitzian boundary chart, cf.
\cite[Ch.\ 1.1.9]{mazya}) or \cite[Def. 1.2.1.2]{grisvard85}.
\end{itemize}
Moreover, $ \Pi \subset \Omega$ is a Lipschitz surface (not necessarily connected) which forms a $(d-1)$-set, cf.
\cite[Ch. II/Ch. VIII.1]{jonsson}, and $\sigma$ is the surface measure on $\Gamma \cup \Pi$, cf. \cite[Ch.~3.3.4C]
{evans:gariepy92} or \cite[Ch.~3.1]{coerc} (being identical with the restriction of the 2-dimensional Hausdorff measure
to this set). 
\end{assu}
This defines the general geometric framework that is restricted implicitly
later on by Assumption \ref{assu:iso}. We are convinced that this
setting is sufficiently broad to cover (almost) all relevant semiconductor geometries -- in particular, referring to the arrangement of
$D$ and $\Gamma$. Please see also the more elaborate Remark \ref{r-kommentar} on this topic below.
%%%%%%%%%%%%%%%%%%%%%%%%%%%%%%%%%%%%%%%%%%%%%%%%%%%%%%%%%%%%%%%%%%%%%%%%%%%%%%%%%%%%%%%%%
% 
\subsection[Notation]{Notation}
\label{sec:general}
%
%%%%%%%%%%%%%%%%%%%%%%%%%%%%%%%%%%%%%%%%%%%%%%%%%%%%%%%%%%%%%%%%%%%%%%%%%%%%%%%%%%%%%%%%%%%%%
For a Banach space $X$ we denote its norm by $\Vert{\cdot}\Vert_X$.
% and the value of a bounded linear functional $f$ on $X$ in $\psi\in{X}$ by $\dual{f}{\psi}_{X}$.
 $\mathbf X$ denotes the direct sum $X{\oplus}X$ of $X$ with itself. $\mathcal{L}(X;Y)$ is the space of
linear, bounded operators from the Banach space $X$ into the Banach space $Y$. We abbreviate
$\mathcal{L}(X):=\mathcal{L}(X;X)$. If $Z$ is a Banach space and $Z^*$ the space of (anti)linear forms
on $Z$, then  $\lrdual{\cdot}{\cdot}_Z$ always denotes the (anti)dual pairing between 
$Z$ and $Z^*$.\\
The (standard) notation $[X,Y]_\theta$, $(X,Y)_{\theta,r}$, respectively, is used for the complex, respectively real
interpolation spaces of $X$ and $Y$ with indices $\theta \in ]0,1[$, $r \in [1,\infty]$.
If $v$ is a function on an interval $J=]0,T[$ taking its values in a Banach space $X$, 
then $\dot{v}$ indicates its derivative in the sense of $X$-valued distributions,
cf. \cite[Ch. III.1.1]{amannbuch}. 
%%%%%%%%%%%%%%%%%%%%%%%%%%%%%%%%%%%%%%%%%%%%%%%%%%%%%%%%%%%%%%%%%%%%%%%%%%%%%%%%%%%%%%%%%%%%%%%%%%%%%%%%
%
\subsection[Function spaces]{Function spaces}
\label{sec:spaces}
%
%%%%%%%%%%%%%%%%%%%%%%%%%%%%%%%%%%%%%%%%%%%%%%%%%%%%%%%%%%%%%%%%%%%%%%%%%%%%%%%%%%%%%%%%%%%%%%%%%%%%%%%%%%%%%
We exemplarily define spaces of functions on the bounded domain $\Omega\subset\mathbb{R}^3$
and on its boundary. 
In the following, we (mostly) %surpress $\Omega$ in the notation, i.e. we 
write $L^2$ instead of $L^2(\Omega)$ and use this convention for all spaces of functions, functionals
 or distributional objects on the bulk domain $\Omega$. \\
If $p\in[1,\infty]$, then $L^p$ is the usual real Lebesgue space on $\Omega$. 
%measurable, $p$-integrable functions on $\Omega$ and $L^\infty$ is the
%space of real, Lebesgue measurable, essentially bounded functions on
%$\Omega$. 
$H^{\theta,q}$ denotes the space of real Bessel potentials (cf. \cite[Ch. 4.2]{triebel}), which coincides with the usual Sobolev space $W^{1,q}$ on $\Omega$ in case of 
$\theta =1$, cf. \cite[Ch. 2.3.3]{triebel}.
$H^{\theta,q}_{\Diri}$ denotes the closure of
\begin{equation*}
C^\infty_D =  \left\{
    \psi|_{\Omega}
    \with
    \psi \in C^\infty_0(\mathbb{R}^3)
    , \;
    \supp \psi 
    \cap \Diri =\emptyset 
  \right \}
  ,
\end{equation*}
in $H^{\theta,q}$, which means that 
 $H^{\theta,q}_{\Diri}$ consists of all elements of
$W^{1,q}$ with vanishing trace on $\Diri$, -- if the trace exists, compare \cite[Thm. 3.7/Corollary 3.8]{jonsson}.
$H^{-\theta,q}_{\Diri}$ denotes the dual of $H^{\theta,q'}_{D}$, where $\frac{1}{q}+\frac{1}{q'}=1$. 
The requirements on $\Omega$ and on $D$ imply the usual interpolation properties within the $\{W^{1,q}_D\}_q$- and 
$\{W^{-1,q}_D\}_q$-scales, cf. \cite{jonsson}.\\
If $Z$ is a Banach space and $A$ is a linear and closed operator in $Z$, then we denote its domain of definition
by $dom_Z(A)$.
%%%%%%%%%%%%%%%%%%%%%%%%%%%%%%%%%%%%%%%%%%%%%%%%%%%%%%%%%%%%%%%%%%%%%%%%%%%%%%%%%%%%%%%%%%%%%%%%%%%%%%%%%%%%%%%%%%%%%%%%%%%
%

\subsection[Elliptic operators I: realistic assumptions on material geometry and coefficients]
{Weak elliptic operators in non-smooth settings}

Before defining the elliptic operators relevant for \eqref{vanRoos}, we introduce %on the dielectric permittivity $\varepsilon$ and 
%on the mobilities $\mu_k$, $k=1,2$ 
the following symmetry and ellipticity conditions:
\begin{defn} \label{d-coeff}
A bounded, measurable, elliptic coefficient function $\rho$
on $\Omega$ that takes its values in the set of symmetric $3\times 3$-matrices, is called an 
\emph{elliptic coefficient function}. Bounded and elliptic means the existence of two 
constants $\low{\rho}$ and $\upp{\rho}$ such that   
 \begin{equation*}
     \low{\rho} |\mathrm y|^2 \leq
    {(\rho(x)\mathrm y)}\cdot{\mathrm y}
    \leq \upp{\rho}|\mathrm y|^2
    ,\quad \text{for a.a. }x \in \Omega,\quad\text{for all }\mathrm y \in \mathbb{R}^3.
  \end{equation*}
\end{defn}

\begin{assu} \label{a-coercitiv}
			\renewcommand{\labelenumi}{\roman{enumi})}
			\begin{enumerate}
\item
The dielectric permittivity $\varepsilon$ and 
 the mobilities $\mu_k$, $k=1,2$ are elliptic coefficient functions.
% in the spirit of the above definition.
\item
We assume that either the boundary measure of the Dirichlet boundary part
$D$ is positive or $\varepsilon_\Gamma$ is strictly positive on a subset
of $\Gamma$ which has positive boundary measure.
Physically spoken, the device has a Dirichlet contact or part of its surface has a positive
capacity.  
\end{enumerate}
\end{assu}

Considering the coefficient functions $\varepsilon$ and $\varepsilon_{\Gamma}$ from now on as fixed, we
define the Robin Poisson operator 
$\widehat{P}: W^{1,2} \rightarrow W^{-1,2}_{D}$ by 
\begin{equation} \label{e-poiss}
\dual{\widehat{P}\psi}{\vartheta}_{W^{1,2}_D}
=  \int_{{\Omega}}
    {(\varepsilon \nabla \psi)}\cdot{\nabla \vartheta}
    \der{x}
    +
    \int_{\Gamma}
    \varepsilon_{\Gamma}  \psi \;\vartheta
    \der{{\sigma}}
    ,
    \;   \psi \in {W}^{1,2},
    \;
    \vartheta \in {W}_{D}^{1,2}.
\end{equation}
Correspondingly, $P$ denotes the restriction of  $\widehat{P}$  to the domain
  ${W}_{D}^{1,2}$.\\ By a slight abuse of notation, $P$  may also denote the maximal restriction of $P$ to any 
range space which continuously embeds into ${W}_{D}^{-1,2}$.
\begin{rem} \label{r-coerc}
Assumption \ref{a-coercitiv} assures that the Poisson operator is coercive, cf. \cite{coerc} and \cite{egert}, and, hence, 
$P:W^{1,2}_D \to W^{-1,2}_D$ is a topological isomorphism.
\end{rem}
Let $\rho$ be an elliptic coefficient function on $\Omega$. Then we define the elliptic operator
$A_{\rho}: W_D^{1,2} \rightarrow W^{-1,2}_{D} $ by 
  \begin{equation} \label{e-ellippp}  
 \dual{ A_{\rho}\psi}{\vartheta}_{W^{-1,2}_D}
 =  \int_{{\Omega}}
      {(\rho \nabla \psi)}\cdot{\nabla \vartheta}
      \der{x}, \quad
      \;   \psi, \vartheta \in {W}_{D}^{1,2},
  \end{equation}
 which may also denote its maximal restriction to a smaller range space. The operator $\widehat{A}_\rho$ is defined accordingly, acting on $W^{1,2}$. Of particular interest is the 
case $\rho= \eta \mu_k$, with $\eta$ a bounded, strictly positive scalar function.

\par 
For our analysis of van Roosbroeck's system, the following assumption is crucial.
\begin{assu} \label{assu:iso}
There is a common integrability exponent $q \in ]3,4[$, such that the operators
\begin{equation} \label{e-iso1}
P:W^{1,q}_D \to W^{-1,q}_D
\end{equation}
and 
\begin{equation} \label{e-iso02}
A_{\mu_k}:W^{1,q}_D \to W^{-1,q}_D, \; k=1,2,
\end{equation} 
are topological isomorphisms.
\end{assu}
\begin{rem} \label{r-kommentar}
			\renewcommand{\labelenumi}{\roman{enumi})}
			\begin{enumerate}
\item
Gajewski and Gr\"oger have already observed in their pioneering
paper \cite{gajewski:groeger89} that a condition like this -- in 1989 being an ad hoc assumption -- 
would lead to a more satisfactory 
analysis of van Roosbroeck's system, compare also the discussion in \cite{veiga}.
\item
If \eqref{e-iso1} or \eqref{e-iso02} is a topological isomorphism for a $q >2$, then
this property remains true for all $\tilde q \in [2,q[$ by Lax-Milgram and 
interpolation, cf. \cite{jonsson}, so the set of such $q$s above $2$ always forms an interval. Thus, it is actually sufficient
 to assume that each of the operators in Assumption \ref{assu:iso} is an isomorphism for some $q>3$. 
Moreover, if $A_\rho:W^{1,q}_D \to W^{-1,q}_D$ is a topological isomorphism, then this property is 
maintained for coefficient functions $\eta \rho$, if the scalar function $\eta$ is strictly 
positive and uniformly continuous on $\Omega$, cf. \cite[Ch. 6]{disser}.
\item
	Assumption \ref{assu:iso} is fulfilled by very general classes of ``layered'' structures
and  additionally, if $D$ and its complement do not meet in a ``too wild'' manner, cf. \cite{RoChristJo} for the most
relevant model settings. A global framework has recently been established in \cite{disser}. 
However, Assumption \ref{assu:iso} indeed restricts the class of admissable coefficient
functions $\varepsilon$ and $\mu_k$. For instance, it is typically not satisfied if three or more different
 materials meet at one edge.
\item
Assumption \ref{assu:iso} also includes interesting geometric constellations that are not covered in \cite{disser}. A relevant example are \emph{buried contacts}, cf. Figure \ref{fig-buried}. The characteristic property of these constellations is that they
touch themselves `from the other side' -- but only at the Dirichlet boundary part $D$. In particular, they
need not be Lipschitz domains. %(As mathematicians, we do not know however, whether such geometries are of 
%relevance for nowadays devices.)...
%\begin{color}{green}{\bf  hier muss die Graphik Bildnonlipsch.pdf her} \end{color}
	\begin{figure}[htbp]
	\centerline{\includegraphics[height=2in]{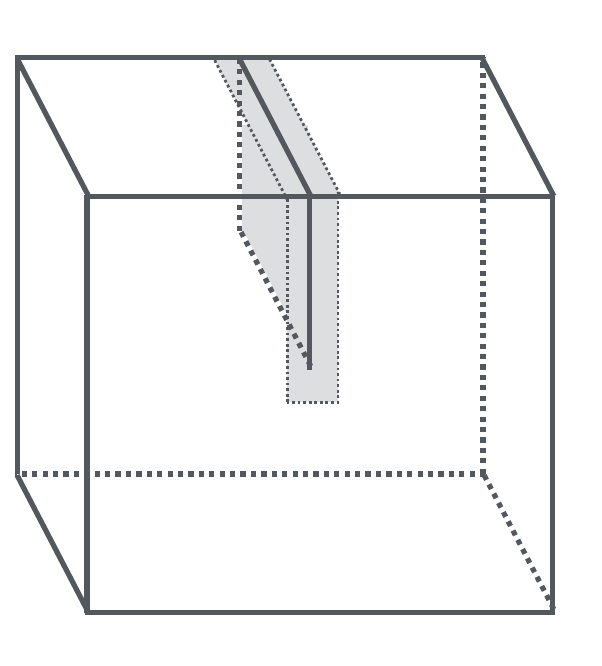}
	}
	\caption{\label{fig-buried}{Sketch of an idealized buried contact as an example of an admissible geometric setting. Dirichlet boundary conditions hold at the contact, i.e.\ on the shaded areas at the inner (buried) surface and close to its outer contact line. 
	}}
	\end{figure}

\item
Note that it is typically not restrictive to assume that all three differential operators
provide topological isomorphisms, if one of them does, since this
property mainly depends on the (possibly) discontinuous
coefficient functions versus the geometry of $D$. This is determined by the material properties of the device on
 $\Omega$, i.e., the coefficient functions $\mu_1, \mu_2, \varepsilon$ 
will often exhibit similar discontinuities and degeneracies.
\end{enumerate}
\end{rem}

%%%%%%%%%%%%%%%%%%%%%%%%%%%%%%%%%%%%%%%%%%%%%%%%%%%%%%%%%%%%%%%%%%%%%%%%%%%%%%%%%%%%%%%%%%%%%%%%%%%%%%%%%%
%
   \subsection[Gradient-dependent and interfacial recombination]{Assumptions on recombination terms  in \eqref{CuCo-eq}}   
   \label{Reco}
%
%%%%%%%%%%%%%%%%%%%%%%%%%%%%%%%%%%%%%%%%%%%%%%%%%%%%%%%%%%%%%%%%%%%%%%%%%%%%%%%%%%%%%%%%%%%%%%%%%%%%%%%%%%%%%%%
For the recombination terms $\reaction^\Omega,$ $\reaction^\Pi, r^\Gamma$ in \eqref{CuCo-eq}, we require the following. 
\begin{assu} \label{assu:recomb}
Let $q$ be as in Assumption \ref{assu:iso}.
We assume that the reaction term in the  bulk, $\reaction^\Omega$,
 %$k=1,2$, 
is a locally Lipschitzian mapping
\begin{equation*}
\reaction^\Omega:\mathbf{W}^{1,q} \times W^{1,q} \times 
{\mathbf W}^{1,q} \ni (u, \varphi, \Phi) \mapsto r^\Omega(u, \varphi, \Phi) \in L^{\frac {q}{2}}.
\end{equation*}
\end{assu}

\begin{assu}\label{recombGamma}
We assume that the reaction term on $\Gamma$, $\reaction^\Gamma$, is a locally Lipschitzian mapping
\begin{equation*}
	r^\Gamma:
	\mathbf  W^{1,q} \times W^{1,q} \times \mathbf W^{1,q} \ni (u, \varphi, \Phi) \mapsto 
r^\Gamma(u, \varphi,\Phi) \in L^{4}(\Gamma, \sigma).
		\end{equation*}
 The same assumption holds, mutatis mutandis, for $r^\Pi$.
\end{assu}
%It is not hard to see that, e.g., generation/recombination terms which are induced by (suitable) superposition
%operators fulfill Assumption \ref{assu:recomb}. 
In particular, the recombination terms introduced in 
\eqref{e-shockleyrh} and \eqref{e-auger} are included.
It is nontrivial to see that the Avalanche generation term, depending on the electric
field and the currents also satisfies Assumption \ref{assu:recomb}. Since the generality of Assumption
\ref{assu:recomb} causes considerable functional analytic effort in the analysis of the system, we give a 
detailed proof that Avalanche generation \eqref{e-0} is indeed included:
It is straightforward to check that the mappings
\begin{equation*} %\label{e-lipp}
L^\infty \times W^{1,q} \times W^{1,q} \ni (u_k,\varphi,\Phi_k) \mapsto \bigl (\nabla \varphi, j_{k}(u_k,\Phi_k)\bigr ) \in \mathbf 
L^q(\Omega;\R^3)
\end{equation*}
are boundedly Lipschitzian.
% Before we present a corresponding result, we give the expression in \eqref{e-0} a 
%precise meaning -- even when 
If $\nabla \varphi$ and $j_k$ are orthogonal to each other, in order to give the expression in \eqref{e-0} a precise meaning, 
we introduce the function $\varkappa:\R^3 \times \R^3\to [0,\infty[$ with
\begin{equation} \label{e-hyper}
\varkappa (\mathfrak e,\mathfrak j) =\begin{cases} 0, \quad \text {if} \quad \mathfrak e \cdot \mathfrak j =0,\\
|\mathfrak j| \exp(\frac {-a}{|\mathfrak e \cdot
\frac {\mathfrak j}{|\mathfrak j|}|}),\quad \text{otherwise,}
\end{cases}
\end{equation}
for $a>0$.
%Thanks to the local Lipschitz property of \eqref{e-lipp} 
It then suffices to show the following result. 
\begin{lem} \label{l-Lipsch}
The mapping 
\begin{equation*}% \label{e-Abb}
 L^q(\Omega; \R^3) \times L^q(\Omega;\R^3) \ni (\mathfrak e, \mathfrak j) \mapsto \varkappa \bigl (\mathfrak e (\cdot),\mathfrak j(\cdot) \bigr) 
\end{equation*}
takes its values in the space $L^{q}(\Omega)$ and
admits 
the Lipschitz estimate
\begin{equation} \label{e-locLipsch}
\Vert \varkappa (\mathfrak e_1 , \mathfrak j_1  ) - \varkappa (
\mathfrak e_2, \mathfrak j_2  ) \Vert_{q/2}
\le
 (|\Omega|^{1/q} + 2 L_a \| \mathfrak{e}_1 \|_{q}) \|\mathfrak{j}_1 -\mathfrak{j}_2 \|_{q} +
L_a \|\mathfrak{j}_2\|_{q} \|\mathfrak{e}_1 -\mathfrak{e}_2 \|_{q}, 
\end{equation}
in $L^{q/2}(\Omega;\R^3)$, where $\| \cdot \|_{q/2}, \| \cdot \|_q $ are the norms in 
$L^{q/2}(\Omega;\R^3)$, $ L^q(\Omega;\R^3)$, respectively, and where $L_a = \frac{4}{e^2a} < \frac{0.542}{a}$. 
\end{lem}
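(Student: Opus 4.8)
The plan is to prove the pointwise Lipschitz bound for the scalar function $\varkappa$ first, and then integrate. The key observation is that $\varkappa(\mathfrak e,\mathfrak j)$ depends on $\mathfrak j$ in two ways: through the prefactor $|\mathfrak j|$, which is globally Lipschitz with constant $1$, and through the unit vector $\tfrac{\mathfrak j}{|\mathfrak j|}$ inside the exponential. The function $t\mapsto \exp(-a/|t|)$ on $\R$ (extended by $0$ at $t=0$) is globally Lipschitz: its derivative is $\tfrac{a}{t^2}\exp(-a/|t|)$, and an elementary one-variable calculus exercise shows $\sup_{t>0}\tfrac{a}{t^2}e^{-a/t}$ is attained at $t=a/2$, giving the bound $\tfrac{4}{e^2 a}=:L_a$ (and numerically $\tfrac{4}{e^2}<0.542$). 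So first I would record the scalar estimate
\[
\bigl| e^{-a/|s|} - e^{-a/|t|}\bigr| \le L_a\, |s-t|,\qquad s,t\in\R,
\]
with the convention $e^{-a/0}=0$, and note that the map $\mathfrak e\mapsto \mathfrak e\cdot \tfrac{\mathfrak j}{|\mathfrak j|}$ is $1$-Lipschitz in $\mathfrak e$ for fixed $\mathfrak j\neq 0$ (it is linear with operator norm $1$, since $\tfrac{\mathfrak j}{|\mathfrak j|}$ is a unit vector).

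Next I would assemble the pointwise bound by a telescoping argument. Write $\varkappa(\mathfrak e_1,\mathfrak j_1)-\varkappa(\mathfrak e_2,\mathfrak j_2) = \bigl(\varkappa(\mathfrak e_1,\mathfrak j_1)-\varkappa(\mathfrak e_1,\mathfrak j_2)\bigr) + \bigl(\varkappa(\mathfrak e_1,\mathfrak j_2)-\varkappa(\mathfrak e_2,\mathfrak j_2)\bigr)$. For the second term, $|\mathfrak j_2|$ is fixed and only the exponential changes, so using the scalar estimate and the $1$-Lipschitz property in $\mathfrak e$ we get $\le |\mathfrak j_2|\,L_a\,|\mathfrak e_1-\mathfrak e_2|$. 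For the first term I would further split: the prefactor changes from $|\mathfrak j_1|$ to $|\mathfrak j_2|$ (contributing $\le \bigl|\,|\mathfrak j_1|-|\mathfrak j_2|\,\bigr|\cdot \exp(\cdots)\le |\mathfrak j_1-\mathfrak j_2|$ since the exponential is $\le 1$), and the exponential's argument changes from $\mathfrak e_1\cdot\tfrac{\mathfrak j_1}{|\mathfrak j_1|}$ to $\mathfrak e_1\cdot\tfrac{\mathfrak j_2}{|\mathfrak j_2|}$, contributing $\le |\mathfrak j_1|\,L_a\,\bigl|\mathfrak e_1\cdot\bigl(\tfrac{\mathfrak j_1}{|\mathfrak j_1|}-\tfrac{\mathfrak j_2}{|\mathfrak j_2|}\bigr)\bigr| \le |\mathfrak j_1|\,L_a\,|\mathfrak e_1|\cdot\bigl|\tfrac{\mathfrak j_1}{|\mathfrak j_1|}-\tfrac{\mathfrak j_2}{|\mathfrak j_2|}\bigr|$. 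The elementary inequality $\bigl|\tfrac{\mathfrak j_1}{|\mathfrak j_1|}-\tfrac{\mathfrak j_2}{|\mathfrak j_2|}\bigr| \le \tfrac{2}{|\mathfrak j_1|}|\mathfrak j_1-\mathfrak j_2|$ (proved by adding and subtracting $\tfrac{\mathfrak j_2}{|\mathfrak j_1|}$) then cancels the $|\mathfrak j_1|$ and yields $\le 2L_a\,|\mathfrak e_1|\,|\mathfrak j_1-\mathfrak j_2|$. One has to be a little careful with the cases where $\mathfrak j_1$ or $\mathfrak j_2$ vanishes, but since $\varkappa$ is defined to be $0$ whenever $\mathfrak e\cdot\mathfrak j=0$ (in particular when $\mathfrak j=0$), and the exponential factor stays in $[0,1]$, each of these degenerate cases is dominated by the same right-hand side; I would check them directly. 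Collecting terms gives the pointwise estimate
\[
\bigl|\varkappa(\mathfrak e_1,\mathfrak j_1)-\varkappa(\mathfrak e_2,\mathfrak j_2)\bigr|
\le \bigl(1 + 2L_a|\mathfrak e_1|\bigr)\,|\mathfrak j_1-\mathfrak j_2| + L_a|\mathfrak j_2|\,|\mathfrak e_1-\mathfrak e_2|,\qquad\text{a.e.\ on }\Omega.
\]

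Finally I would pass from the pointwise bound to the $L^{q/2}$-bound. Raising to the power $q/2$ and integrating, then applying the triangle inequality in $L^{q/2}(\Omega)$ to the two summands, the first summand is handled by Hölder with exponents splitting $\tfrac{q}{2}=\tfrac{1}{q}+\tfrac{1}{q}$ — wait, more precisely $\tfrac{2}{q}=\tfrac{1}{q}+\tfrac{1}{q}$ — so $\|(1+2L_a|\mathfrak e_1|)|\mathfrak j_1-\mathfrak j_2|\|_{q/2}\le \|1+2L_a|\mathfrak e_1|\|_q\,\|\mathfrak j_1-\mathfrak j_2\|_q \le (|\Omega|^{1/q}+2L_a\|\mathfrak e_1\|_q)\|\mathfrak j_1-\mathfrak j_2\|_q$, and the second summand similarly gives $L_a\|\mathfrak j_2\|_q\|\mathfrak e_1-\mathfrak e_2\|_q$. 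This is exactly \eqref{e-locLipsch}. Along the way, taking $\mathfrak e_2=\mathfrak j_2=0$ shows $\varkappa(\mathfrak e,\mathfrak j)\in L^{q/2}(\Omega)$ with $\|\varkappa(\mathfrak e,\mathfrak j)\|_{q/2}\le (|\Omega|^{1/q}+2L_a\|\mathfrak e\|_q)\|\mathfrak j\|_q$, so the map is well defined into $L^{q/2}$. The main obstacle is not any single step but the bookkeeping: isolating the right ``intermediate'' point in the telescoping so that the factor $|\mathfrak j_1|$ produced by differentiating the unit-vector map is cleanly absorbed by the $2/|\mathfrak j_1|$ in the unit-vector difference estimate, and simultaneously keeping track of the degenerate cases $\mathfrak e\cdot\mathfrak j=0$ where $\varkappa$ is declared zero. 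The sharp constant $L_a=4/(e^2a)$ is the only genuinely computational point, and it is a routine one-variable maximization.
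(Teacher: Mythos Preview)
Your proof is correct and follows essentially the same route as the paper: decompose $\varkappa(\mathfrak e,\mathfrak j)=|\mathfrak j|\,f_a\bigl(\varpi(\mathfrak e,\mathfrak j)\bigr)$ with $f_a(t)=e^{-a/|t|}$ and $\varpi(\mathfrak e,\mathfrak j)=\mathfrak e\cdot\tfrac{\mathfrak j}{|\mathfrak j|}$, compute the Lipschitz constant $L_a=4/(e^2a)$ of $f_a$ by maximizing $at^{-2}e^{-a/t}$, telescope exactly as you do, use the unit-vector estimate $\bigl|\tfrac{\mathfrak j_1}{|\mathfrak j_1|}-\tfrac{\mathfrak j_2}{|\mathfrak j_2|}\bigr|\le \tfrac{2}{|\mathfrak j_1|}|\mathfrak j_1-\mathfrak j_2|$, and finish with H\"older. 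One small addendum: the statement asserts that $\varkappa(\mathfrak e,\mathfrak j)\in L^q(\Omega)$ (not just $L^{q/2}$), which your argument via $\mathfrak e_2=\mathfrak j_2=0$ does not quite give, but this follows immediately from the pointwise bound $|\varkappa(\mathfrak e,\mathfrak j)|\le |\mathfrak j|$.
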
 

\begin{proof}
	For $\mathfrak{e}, \mathfrak{j} \in \R^3$, we consider the function $\varkappa$ in \eqref{e-hyper} as composed of the functions $f_a \colon [0,+\infty[ \ni t \mapsto e^{\frac{-a}{t}} $ and $\varpi \colon \R^3 \times \R^3 \ni (\mathfrak{e}, \mathfrak{j}) \mapsto |\mathfrak{e}\cdot \frac{\mathfrak{j}}{\Vert \mathfrak{j} \Vert}|$.Regarding $f_a$, note that it is analytic on $]0,+\infty[$, bounded by $1$, and has Lipschitz constant $L_a = \frac{4}{e^2 a}$, and the last two properties extend into $0$. To show the Lipschitz estimate, consider $\mathfrak e_1, \mathfrak e_2, \mathfrak j_1, \mathfrak j_2 \in \mathbb{R}^3$. If $\mathfrak j_1 = \mathfrak j_2 = 0$, then the estimate is trivial. Without loss of generality, let $\mathfrak j_1 \neq 0$. Regarding $\varpi$, we estimate
	\begin{equation*}
		|\varpi(\mathfrak{e}_1, \mathfrak{j}_1) - \varpi(\mathfrak{e}_1, \mathfrak{j}_2)| \leq \Vert \mathfrak{e}_1 \Vert \frac{2}{\Vert \mathfrak{j}_1 \Vert} \Vert \mathfrak{j}_1 - \mathfrak{j}_2 \Vert, 
		\end{equation*} 
		and
		\begin{equation*}
			|\varpi(\mathfrak{e}_1, \mathfrak{j}_2) - \varpi(\mathfrak{e}_2, \mathfrak{j}_2)| \leq \Vert \mathfrak{e}_1 - \mathfrak{e}_2 \Vert.
			\end{equation*} 
			Thus, we obtain
			\begin{align*}
			|\varkappa(\mathfrak{e}_1, \mathfrak{j}_1) - \varkappa(\mathfrak{e}_2, \mathfrak{j}_2)| & \leq |\varkappa(\mathfrak{e}_1, \mathfrak{j}_1) - \varkappa(\mathfrak{e}_1, \mathfrak{j}_2)| + |\varkappa(\mathfrak{e}_1, \mathfrak{j}_2) - \varkappa(\mathfrak{e}_2, \mathfrak{j}_2)| \\
			& \leq \Vert \mathfrak{j}_1 \Vert |f_a(\varpi(\mathfrak{e}_1, \mathfrak{j}_1)) - f_a(\varpi(\mathfrak{e}_1, \mathfrak{j}_2)) | \\
			& \qquad + \Vert \mathfrak{j}_1 - \mathfrak{j}_2 \Vert | f_a (\varpi (\mathfrak{e}_1, \mathfrak{j}_2))| + L_a \Vert \mathfrak{e}_1	 - \mathfrak{e}_2 \Vert \\
		& \leq  (2L_a \Vert \mathfrak{e}_1 \Vert +1) \Vert \mathfrak{j}_1	 - \mathfrak{j}_2 \Vert + L_a \Vert \mathfrak{e}_1	 - \mathfrak{e}_2 \Vert. 			\end{align*}
				The estimate \eqref{e-locLipsch} now follows from H\"older's inequality. 
	\end{proof}

%%%%%%%%%%%%%%%%%%%%%%%%%%%%%%%%%%%%%%%%%%%%%%%%%%%%%%%%%%%%%%%%%%%%%%

\subsection[Elliptic operators II: the domains of fractional powers]{Elliptic operators II: the domains of fractional powers}
\label{powers}
We choose an abstract formulation for the system that intricately solves the analytical problems arising from combining non-smoothness of material and geometry and nonlinearity of the dynamics. This gives rise to some technicalities in the proof. For example, on one hand, our techniques heavily rest on
\emph{complex} methods; this is in particular the instrument to provide exact descriptions for
the domains of fractional powers of the elliptic operators involved. On the other hand, the system is intrisically a real one -- of course, we are (only) interested in  real solutions. %This has, among other things, the unpleasant consequence
%that we are forced to show the existence of a solution by an approach which works in spaces of \emph{real}
%functions (cf. Prop. \ref{p-pruess} below) and, afterwards, to show by \emph{complex} methods that this
%solution is indeed classically differentiable (cf. Prop. \ref{p-luna} below).
In this subsection, we consider complex Banach spaces and complexifications of the elliptic operators $A_\rho$.
In order to avoid further indices, the complex objects are denoted analogously to the real ones,
only furnished by an underline. 
%Let $\underline H^{\theta,q}_D$ $\theta \in ]0,1[$, denote the 
%\emph{complex} space of Bessel potentials (observing that $\underline H^{1,q}_D$ is identical with the Sobolev space
%$\underline W^{1,q}_D$), compare Ch. \ref{sec:spaces}.  Let $\underline H^{-\theta,q}_D$ denote the space of continuous anti-linear
% forms on $\underline H^{\theta,q'}_D$.\\

Let $\rho$ be an elliptic coefficient function on $\Omega$. Then we define the elliptic operator
$\underline A_\rho: \underline W_D^{1,2} \rightarrow \underline W^{-1,2}_{D} $ by 
  \begin{equation*} %\label{e-poi}  
 \dual{\underline A_\rho \psi_1}{\psi_2}_{\underline W^{-1,2}_D}
  = \int_{{\Omega}}
      {(\rho \nabla \psi_1)}\cdot{\nabla \overline \psi_2}
      \der{x}, \quad
      \;   \psi_{1}, \psi_{2} \in \underline {W}_D^{1,2},
  \end{equation*}
We show that the isomorphism property \eqref{e-iso02}
 transfers to the complex spaces.
\begin{lem} \label{l-isocomplex}
If $\rho$ is a real, elliptic coefficient function, such that 
\begin{equation} \label{e-isoreell}
A_\rho:W^{1,q}_D \to W^{-1,q}_D
\end{equation}
is a topological isomorphism,  then 
\begin{equation} \label{e-isoreco}
\underline A_\rho:\underline  W^{1,q}_D \to \underline  W^{-1,q}_D
\end{equation}
is a topological isomorphism.
\end{lem}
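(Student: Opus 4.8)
The plan is to deduce the complex isomorphism property from the real one by exploiting the fact that a complexified operator acts diagonally with respect to the decomposition into real and imaginary parts. First I would observe that $\underline{W}^{1,q}_D = W^{1,q}_D \oplus i\, W^{1,q}_D$ and similarly $\underline{W}^{-1,q}_D = W^{-1,q}_D \oplus i\, W^{-1,q}_D$, and that since $\rho$ is a \emph{real} coefficient function, the sesquilinear form defining $\underline{A}_\rho$ splits: for $\psi_1 = f_1 + i g_1$ and $\psi_2 = f_2 + i g_2$ with $f_j, g_j$ real, one has $\int_\Omega (\rho \nabla \psi_1) \cdot \nabla \overline{\psi_2}\, dx = \bigl(\langle A_\rho f_1 \mid f_2 \rangle + \langle A_\rho g_1 \mid g_2 \rangle\bigr) + i\bigl(\langle A_\rho g_1 \mid f_2 \rangle - \langle A_\rho f_1 \mid g_2 \rangle\bigr)$. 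Reading this against arbitrary real test pairs $(f_2, g_2)$ shows that $\underline{A}_\rho(f_1 + i g_1) = A_\rho f_1 + i\, A_\rho g_1$; that is, $\underline{A}_\rho$ is simply $A_\rho \oplus A_\rho$ under the real/imaginary identification.

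From there the conclusion is immediate: the direct sum of two topological isomorphisms is a topological isomorphism, with inverse $\underline{A}_\rho^{-1}(\phi_1 + i\phi_2) = A_\rho^{-1}\phi_1 + i\, A_\rho^{-1}\phi_2$, and the norm bounds on $\underline{A}_\rho$ and $\underline{A}_\rho^{-1}$ are controlled by those on $A_\rho$ and $A_\rho^{-1}$ (up to the equivalence constants between the norm on a complexified Banach space and the norm on the corresponding real direct sum, which are universal). Hence \eqref{e-isoreco} follows from \eqref{e-isoreell}. One should make sure the complexified norms on $\underline{W}^{1,q}_D$ and $\underline{W}^{-1,q}_D$ are taken to be (equivalent to) the natural ones on $W^{1,q}_D \oplus W^{1,q}_D$, which is the standard convention; duality between $\underline{W}^{1,q'}_D$ and $\underline{W}^{-1,q}_D$ is compatible with this splitting precisely because the antidual pairing conjugates the second slot, matching the $\overline{\psi_2}$ in the form.

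The only genuinely delicate point is \textbf{bookkeeping of the conjugation}: because the pairing $\langle\cdot\mid\cdot\rangle_{\underline{W}^{-1,q}_D}$ is \emph{anti}linear in its second argument while $A_\rho$ is a real operator, one must verify that no stray signs or conjugates appear when identifying $\underline{A}_\rho$ with $A_\rho \oplus A_\rho$ rather than with something twisted like $A_\rho \oplus (-A_\rho)$ or a rotation. The computation above confirms the clean diagonal form, so this is a matter of care rather than a real obstacle. Everything else --- that real interpolation and Lax--Milgram arguments transfer, that the integrability exponent $q$ is unchanged --- is formal once the diagonal structure is in hand.
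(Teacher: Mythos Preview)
Your proof is correct and follows essentially the same idea as the paper's: both exploit that, because $\rho$ is real, $\underline{A}_\rho$ acts diagonally as $A_\rho \oplus A_\rho$ on the real/imaginary decomposition, so the complex inverse is built componentwise from the real one. The paper phrases this via a $*$-operation on the codomain $\underline{W}^{-1,q}_D$ to split functionals into real and imaginary parts, whereas you compute the sesquilinear form directly on the domain side; these are two presentations of the same argument.
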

\begin{proof}
We define  a *-operation in $\underline  W^{-1,q}_D$ by setting
 $\langle f^*|\psi \rangle_{\underline  W^{-1,q}_D} :=\overline 
{\langle f| \overline \psi \rangle_{\underline  W^{-1,q}_D}},$ , for $\psi \in \underline W^{1,q'}_D$.  Evidently, one has $f=\frac {f +f^*}{2} + i \frac {f -f^*}{2i}$ and both $f_1:=\frac {f +f^*}{2}$ and $f_2:=\frac {f -f^*}{2i}$ attain real values for real functions
 $\psi \in W^{1,q'}_D$ . Hence, $f_1, f_2$ may be viewed as elements of
the real space $W^{-1,q}_D$. 
Moreover, since $\underline A_\rho^{-1}$ transforms real elements $f=f^* \in W^{-1,q}_D$ into real functions, the 
isomorphism property \eqref{e-isoreell} carries over to the one in \eqref{e-isoreco}.
\end{proof}

%\begin{defn} \label{d-XY}
%From now on we systematically denote $H^{-\iota,q}_D$, with $\iota = \frac {1}{2}(1+\frac {3}{q})$ by $X$, $\mathbf X:=X \oplus X$.
%$Y$ denotes the space $H^{-\frac {3}{q},q}_D$, $\mathbf Y:=Y \oplus Y$. By $\mathcal D_\mu$ we denote the direct sum
%$dom_X(A_{\mu_1} ) \oplus dom_X(A_{\mu_1} ) $ cf. Def. \ref{d-XY}.
%\JJJ hier auch $\underline Y$ bzw. $\underline {\mathbf Y}$ einfuehren?? \EEE
%\end{defn}
In case of smooth data (smooth domains, coefficients and absence of mixed boundary conditions) the determination of the
domains of fractional powers is classical, cf. \cite{seeley}. In our situation, this does not work, but the subsequent
powerful results from \cite{ausch} apply.
\begin{prop}\label{p-ausch}
Assume $q \ge 2$ and let $\rho$ be an elliptic coefficient function on $\Omega$. Then
	\renewcommand{\labelenumi}{\roman{enumi})}
			\begin{enumerate}
\item 
$(\underline A_\rho+1)^{\frac {1}{2}}$ provides a topological isomorphism of $\underline  L^q$
 and $\underline  W^{-1,q}_D$,
\item
the operator $\underline A_\rho+1$ is positive on both spaces, $\underline  L^q$ and $\underline  W^{-1,q}_D$, i.e.\ it 
satisfies resolvent estimates
of the kind 
\begin{equation} \label{e-reslv}
\|(\underline A_\rho +1+\lambda)^{-1}\|_{\mathcal L(\underline  L^q)} \le \frac {1}{1+\lambda}, \quad \|(\underline A_\rho +1+\lambda)^{-1}
\|_{\mathcal L(\underline  W^{-1,q}_D)}
 \le \frac {c}{1+\lambda}, 
\end{equation}
for all $ \lambda \in [0,\infty[$ and some constant $c$, cf \cite[Ch. 1.14]{triebel}.
In consequence, all fractional powers are well-defined, cf. \cite[Ch. 1.15]{triebel}.			%
\item
the operator $\underline A_\rho+1$ admits \emph{bounded purely imaginary powers} on $\underline  W^{-1,q}_D$, i.e.\ one has
\[
\sup_{\tau \in [0,1]} \|(\underline A_\rho+1) ^{i \tau} \|_{\mathcal L(\underline  W^{-1,q}_D)} < \infty.
\]
\end{enumerate}
\end{prop}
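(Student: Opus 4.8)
\textbf{Proof proposal for Proposition \ref{p-ausch}.}

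The plan is to deduce all three statements from the general machinery of Auscher, McIntosh and collaborators \cite{ausch} applied to the divergence-form operator $\underline A_\rho$ with bounded measurable symmetric coefficients. First I would address (ii), the positivity (sectoriality with angle zero) of $\underline A_\rho+1$ on the two scales. On $\underline L^q$ this is standard: $-\underline A_\rho$ generates a contraction semigroup on $\underline L^2$ by the Lax--Milgram/form estimate (using only boundedness and ellipticity of $\rho$), and this semigroup is submarkovian, hence extends to a contraction semigroup on every $\underline L^q$, $1\le q\le\infty$; the shift by $1$ then yields the resolvent bound $\|(\underline A_\rho+1+\lambda)^{-1}\|_{\mathcal L(\underline L^q)}\le (1+\lambda)^{-1}$ for $\lambda\ge 0$ directly from the Hille--Yosida inequality for a contraction semigroup. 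On $\underline W^{-1,q}_D$ one argues by duality/transposition: $\underline A_\rho$ on $\underline W^{-1,q}_D$ is (up to the sign and the form realization) the adjoint of $\underline A_\rho$ acting on $\underline W^{1,q'}_D$, and since $\underline A_\rho+1\colon \underline W^{1,q'}_D\to \underline W^{-1,q'}_D$ is an isomorphism (this is Lemma \ref{l-isocomplex} combined with Assumption \ref{assu:iso}, for the relevant $q'<q$ — note $q'<2<q$ so this is covered), one transfers the resolvent estimate from the $L^{q'}$-scale, picking up the absolute constant $c$ in \eqref{e-reslv}. Once positivity on both spaces is established, the existence and good behaviour of all fractional powers $(\underline A_\rho+1)^\alpha$ is automatic by the Balakrishnan/Komatsu calculus, cf. \cite[Ch. 1.15]{triebel}.

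For (i), the square-root statement, the key input is the resolution of the Kato square-root problem for divergence-form operators on $\mathbb R^n$ and its local/mixed-boundary-condition versions, which is exactly what \cite{ausch} supplies: $(\underline A_\rho+1)^{1/2}$ maps $\dom_{\underline L^q}((\underline A_\rho+1)^{1/2})$ isomorphically onto $\underline L^q$, and for $q$ in a neighbourhood of $2$ (which contains our fixed $q\in\,]3,4[$ under Assumption \ref{assu:iso}, since that assumption forces the $W^{1,q}_D$--$W^{-1,q}_D$ isomorphism and hence the relevant range of $q$ for the square-root estimate to reach past $3$) one has $\dom_{\underline L^q}((\underline A_\rho+1)^{1/2}) = \underline W^{1,q}_D$. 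On the other hand, by Proposition's part (ii) and the isomorphism $\underline A_\rho+1\colon \underline W^{1,q}_D\to \underline W^{-1,q}_D$, the half-power considered between the $\underline W^{-1,q}_D$ and $\underline L^q$ scales is exactly the "square root of the isomorphism", so $(\underline A_\rho+1)^{1/2}\colon \underline L^q\to \underline W^{-1,q}_D$ is an isomorphism; concretely one factors $(\underline A_\rho+1) = \big[(\underline A_\rho+1)^{1/2}\colon \underline W^{1,q}_D\to\underline L^q\big]\circ\big[(\underline A_\rho+1)^{1/2}\colon \underline L^q\to \underline W^{-1,q}_D\big]$ and invokes the Kato estimate on the first factor together with the isomorphism \eqref{e-iso02} (complexified via Lemma \ref{l-isocomplex}) to conclude the second factor is an isomorphism.

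For (iii), the bounded imaginary powers on $\underline W^{-1,q}_D$, the route is again to cite the bounded holomorphic functional calculus for $\underline A_\rho+1$ provided by \cite{ausch}: a divergence-form operator of this type has a bounded $H^\infty$-calculus of arbitrarily small angle on $\underline L^q$ for $q$ near $2$, and — by the same duality/transference used in part (ii), moving along the $\{\underline W^{-1,q}_D\}$-scale via the isomorphism with the $\{\underline W^{1,q'}_D\}$-scale — this calculus persists on $\underline W^{-1,q}_D$. Since $z\mapsto z^{i\tau}$ is bounded and holomorphic on every sector of angle $<\pi$, uniformly for $\tau$ in a bounded set, the $H^\infty$-calculus yields $\sup_{\tau\in[0,1]}\|(\underline A_\rho+1)^{i\tau}\|_{\mathcal L(\underline W^{-1,q}_D)}<\infty$. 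I expect the main obstacle to be purely bookkeeping rather than conceptual: one must check carefully that the exponent $q$ fixed in Assumption \ref{assu:iso} actually lies in the range for which the results of \cite{ausch} (Kato estimate, $H^\infty$-calculus, identification of $\dom((\underline A_\rho+1)^{1/2})$ with $\underline W^{1,q}_D$) are valid in the present non-smooth, mixed-boundary-condition geometry — equivalently, that the elliptic isomorphism assumption is strong enough to propagate into the square-root and functional-calculus statements — and that the complexification (the underlined objects) does not disturb any of these properties, which is precisely the content of Lemma \ref{l-isocomplex} and a routine check for the self-adjoint-type structure of the form.
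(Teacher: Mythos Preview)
Your argument for (ii) on $\underline L^q$ (submarkovian contraction semigroup, then Hille--Yosida) is correct and matches the paper. But there is a genuine gap elsewhere: the proposition is stated for an \emph{arbitrary} elliptic coefficient function $\rho$ and all $q\ge 2$, with no isomorphism hypothesis, yet your arguments for (i) and for the $\underline W^{-1,q}_D$-part of (ii) repeatedly invoke Assumption~\ref{assu:iso}. That assumption concerns only the three specific operators $P$, $A_{\mu_1}$, $A_{\mu_2}$ at one particular exponent; it says nothing about a general $A_\rho$, and the proposition must hold without it. In particular, your factorization in (i), which needs $\underline A_\rho+1:\underline W^{1,q}_D\to\underline W^{-1,q}_D$ to be an isomorphism, is simply not available here, so the argument does not prove the statement as written.

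The paper's route avoids this entirely and is much shorter. Part (i) is not re-derived but cited: it is the main result of \cite{ausch} (Thm.~5.1 there), valid for all $q\ge2$ and any elliptic $\rho$ under the geometric Assumption~\ref{spatial} alone. Part (iii) is likewise a direct citation of \cite[Ch.~11]{ausch}. With (i) already in hand, the $\underline W^{-1,q}_D$-resolvent estimate in (ii) is immediate by conjugating the $\underline L^q$-resolvent with the isomorphism $(\underline A_\rho+1)^{1/2}:\underline L^q\to\underline W^{-1,q}_D$. So the logical order is (i) first, then (ii); your reversal forces you to manufacture the $\underline W^{-1,q}_D$-estimate by a duality argument, and that is exactly where the unwarranted extra hypothesis enters.
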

\begin{proof}
i) is the main result of \cite{ausch}, see Thm. 5.1. Regarding ii), it is well-known that, under the above conditions,
 $\underline A_\rho$ generates a strongly continuous contraction semigroup on every $\underline  L^p$,
 $ p \in ]1,\infty[$, cf. \cite[Thm. 4.28]{Ouh05}. Thus, the first resolvent estimate in \eqref{e-reslv} 
follows by the Hille-Yosida theorem, cf \cite[Thm. X.47a]{reed2}.
The second estimate is deduced from the first by i). Finally, iii) is proved in \cite[Ch. 11]{ausch}.
\end{proof}

%%%%%%%%%%%%%%%%%%%%%%%%%%%%%%%%%%%%%%%%%%%%%%%%%%%%%%%%%%%%%%%%%%%%%%%%%%%%%%%%%%%%%%%%%%%%
%
\subsection[Inhomogeneous data]{Inhomogeneous data}
\label{sec:diribv}
%
%%%%%%%%%%%%%%%%%%%%%%%%%%%%%%%%%%%%%%%%%%%%%%%%%%%%%%%%%%%%%%%%%%%%%%%%%%%%%%%%%%%%%%%%%%%%%%
For setting up the Poisson and current--continuity equations in
appropriate function spaces, we split the unknowns into two parts,
where one part each represents the inhomogeneous Dirichlet boundary values
${\varphi}_D$ and $\Phi_k^D$, $k=1,2$. %Concerning the inhomogeneties ${\varphi}_D$ and $\Phi_k^D$  in
%the boundary conditions of \eqref{vanRoos} and with respect to the doping profile $d$ we make the following assumptions.

\begin{assu} \label{assu:poi-diri-bv}
	The data $d, \varphi_\Gamma, \varphi_D$ and $\Phi_k^D$ in \eqref{vanRoos} are such that
\renewcommand{\labelenumi}{\roman{enumi})}
  \begin{enumerate}
\item 
the doping $d$ is either contained in $ W^{1,\infty}(J;L^{q/2})$ or it is independent of time and satisfies $d \in W^{-1,q}_D$, 
which would include dopings concentrated on surfaces, cf.\ \cite{naz} \cite{drumm}.

\item 
the Robin boundary value $\varphi_\Gamma$ satisfies $\varphi_\Gamma \in W^{1,\infty}(J;L^4(\Gamma,\sigma))$, 

\item
there are functions 
$\varphi^D, \Phi^d_k \in W^{1,\infty}(J;L^{q/2})\cap L^\infty(J;W^{1,q})$ that also satisfy $\widehat{A}_{\mu_k} \Phi^d_k,
 \widehat{P} \varphi^D \in L^\infty(J;L^{q/2})$  such that $\varphi^D(t)\vert_D = \varphi_D (t)$ and $ \Phi^d_k(t)\vert_D
 = \Phi^D_k(t)$ in the sense of traces.
\end{enumerate}
\end{assu}

\begin{rem} \label{r-L4}
Note that we do not suppose that the function $\varphi_\Gamma$ takes its values in $L^\infty(\Gamma, \sigma)$
with regularity assumptions for the dependence on time. If there were a 
continuity requirement on the mapping $\overline{J} \ni t \mapsto 
\varphi_\Gamma(t,\cdot) \in L^\infty(\Gamma, \sigma)$, this would exclude an indicator function of a subset of
$\Gamma$ that moves  in $\Gamma$ over time. %Therefore, it is appropriate to chose a continuity requirement in
%the space $L^4(\Gamma, \sigma)$ only.
\end{rem}

\begin{rem} \label{data}
	The regularity Assumption \ref{assu:poi-diri-bv}\,iii) is easily satisfied for smooth $\varphi_D$ and $\Phi^D$. In view of the fact that $D$ is a $(d-1)$-set of Jonsson/Wallin (cf. 
	\cite[Ch. II]{jons}), we refer to
	\cite[Ch.~V]{jonsson} for examples of suitable extension and trace operators.
Note that additional time regularity of the data transfers to additional regularity of the solution, cf.\ Theorem \ref{mr} and Remark \ref{r-Pi} below. 
	\end{rem}

  With Assumption \ref{assu:poi-diri-bv}, define 
  \begin{equation*}%\label{inhPot}
	  \varphi_d(t) = P^{-1}(d+\varphi_\Gamma)(t) + (\mathrm{Id} - P^{-1}\widehat{P}) \varphi^D)(t) \in W^{-1,q}. 
	  \end{equation*} 
	  Then $\varphi_d$  solves 

	  \begin{equation*} %\label{phi-d}
	  	\left\{ \begin{array}{rcll}
	  	-\mathrm{div}\,\varepsilon \nabla \varphi_d & = & d, & \text{in } \Omega, \\
	  	\varphi_d & = & \varphi_D, & \text{on } D, \\
	  	\nu \cdot (\varepsilon \nabla \varphi_d) + \varepsilon_\Gamma \varphi_d & = & \varphi_\Gamma, & \text{on } \Gamma, 
	  	\end{array} \right.
	  	\end{equation*} 
 and the split 
\begin{equation}\label{splitPot}
	\varphi = \varphi_d + \tilde{\varphi}
	\end{equation}
	gives a solution $\varphi$ of \eqref{Poisson-eq} with 
	\begin{equation}\label{tildebyu}
		\tilde{\varphi} = P^{-1}(u_1 - u_2).
		\end{equation} 
	 
	 For the quasi Fermi levels $\Phi_k$, in the following, we use the direct split 
	 \begin{equation}\label{splitPhi}
\Phi = \phi + \Phi^d,	 
		 \end{equation}
		 so that, in particular, $\phi(t) \in \mathbf{W}^{1,q}_D$ is equivalent to $\Phi(t) \in \mathbf{W}^{1,q}$ and 
$\Phi(t)\vert_D = \Phi^D(t)$.

%\begin{rem} \label{r-boundjustify}
%\renewcommand{\labelenumi}{\roman{enumi})}
%\begin{enumerate}
%\item
%\eqref{e-splitoff} and the property $\zeta \in {W}_{D}^{1,q}$ give \eqref{eq:trace2}.
%Additionally, if $d, u_1, u_2 \in L^2$, then \eqref{e-splitoff}
%and \eqref{e-linpoi} together with (\ref{eq:trace1}) imply 
%$ {\nu}\realcdot{ 
 %     \left(
 %       \varepsilon \grad \varphi
 %     \right)
 %   }
 %   + 
 %   \varepsilon_{\Neumann} \varphi 
 %    =\varphi_\Gamma $. The reader should notice that this is in accordance with \eqref{Poisson-eq}.
%\end{enumerate}
%\end{rem}

%%%%%%%%%%%%%%%%%%%%%%%%%%%%%%%%%%%%%%%%%%%%%%%%%%%%%%%%%%%%%%%%%%%%%%%%%%%%
\section[Abstract formulation of the system]
{Abstract formulation of \eqref{vanRoos}}

\label{sec:nl-reform}
In this section, we rewrite the van Roosbroeck system as a quasilinear abstract Cauchy problem for the homogeneous 
 quasi Fermi levels $\phi_1, \phi_2$, 
 \begin{equation} \label{ACP}
 %		\mathbf{(ACP)} \qquad 
 		\dot{\phi}(t) + \mathcal{A}(t,\phi(t))\phi(t)  = \mathcal{R}(t,\phi(t)) \in \mathbf X, 
 % \\
 %		  u^*(0) & =u_0 - u^D(0) \in \mathbf{TR}_p^q:=(\mathbf{W}^{-1,q}_D, \mathbf{W}^{1,q}_D)_{1-\frac{1}{p},p}.
 		\end{equation}
 %end{subequations}
with initial condition $\phi(0) = \Phi_0 - \Phi^d(0)$.
In the next subsection, we motivate and define the Banach space $\mathbf X$ 
 -- being a rather `unorthodox' one --
in which the problem is set. It becomes clear why the requirements due to the combination of non-smoothness and non-linearity
 of the system 
 do not allow us to use an $L^p$- or an $W^{-1,2}_D$-space. We then prove the preliminary properties of the space $\mathbf{X}$
 that justify its choice and are needed in the following. \\
 To derive \eqref{ACP}, we eliminate the electrostatic potential $\varphi$ from
the continuity equations. Replacing the carrier densities $u_1$ and
$u_2$ on the right hand side of Poisson's equation by \eqref{eq:densities}/\eqref{e-relation} -- thereby 
taking into account \eqref{splitPot} and \eqref{splitPhi} -- one obtains
 a nonlinear Poisson equation for $\tilde{\varphi}$. In Subsection \ref{sec:nl-poisson}, we solve this equation in its dependence of 
 prescribed quasi Fermi levels $\Phi \in \mathbf{W}^{1,q}$. This way of nesting the equations is also used in numerical
schemes for the van Roosbroeck system. It is due to Gummel \cite{gummel64} and was the first reliable numerical
technique to solve these equations for carriers in an operating semiconductor device structure.\\
Finally, in Subsection \ref{sec:der}, we derive the abstract formulation of type \eqref{ACP}. 

\subsection{Choice of the ambient space $\mathbf{X}$}

We discuss structural and regularity properties of the unknowns $u,\varphi, \Phi$ of the transient semiconductor equations in
 \eqref{vanRoos} to motivate the choice of $\mathbf{X}$.

\begin{itemize}
\item 
In view of the 
%\emph{inhomogeneous} Neumann condition and the 
jump condition on the surface $\Pi$ on the fluxes $j_k$ 
in \eqref{CuCo-eq}, it cannot be expected that $\mathrm{div}\,j_k$ is a function.
%, but an object which 'lives' partly on 
%$\Gamma \cup \Pi$, cf.\ Lemma \ref{t-propX}. 
This excludes spaces of type $L^p$, cf.\ Remark \ref{r-Pi}. 
In addition, with the choice of a space $\mathbf{X}$
that includes distributional objects, 
the inhomogeneous Neumann conditions $r^\Gamma$ in the
 current-continuity equations \eqref{CuCo-eq} and the surface recombination term $r^\Pi$ can be included in the right-hand side
 of \eqref{ACP}, cf.\ Lemma \ref{t-propX}.
\item 
For our analysis, we require an adequate parabolic theory for the divergence operators on $\mathbf X$. Due to the \emph{non-smooth geometry}, the \emph{mixed boundary conditions} and \emph{discontinous 
coefficient functions}, this is nontrivial. The first crucial 
point is that the operators
have to satisfy maximal parabolic regularity on $\mathbf X$, with a domain of definition that does not change,
 cf.\ Lemma \ref{l-maxparpert}.
\item For the handling of `squares' or other functions of gradients in the Avalanche and other recombination terms,  
 the Banach space $X$ should be sufficienlty `small' so that the parabolic time-trace space, cf. Theorem \ref{mr}, embeds
 into $W^{1,q}$, cf.\ Corollary \ref{ttrace}. This excludes spaces of type $W_D^{-1,r}$. With this strategy, at the same time,
 the space needs to be sufficiently large for the embedding $L^{q/2} \hookrightarrow X$ to hold, cf. Lemma \ref{t-propX}.
\item Finally, the dependence $\eta \mapsto A_{\eta \rho}$, cf.\ \eqref{e-ellippp}, should be well-behaved in the sense that
 it should be Lipschitz with respect to functions  
$\eta$ in the parabolic time-trace space, cf.\ Lemma \ref{t-multipl}. 
\end{itemize}
With this discussion in mind, for $q>3$
the number from  Assumption \ref{assu:iso}, we define
\begin{equation*}
X:=[L^{q}, W^{-1,q}_D]_{\frac {3}{q}} \quad\text{ and }\quad \mathbf X:=X \oplus X.
\end{equation*}
Moreover, we put $\mathcal D_{\mu}:= dom_X(A_{\mu_1}) \oplus dom_X(A_{\mu_2})$, equipped with the graph norm.

\begin{rem} \label{r-complexinterp}
The complex interpolation functor applies to real spaces in the usual sense, following \cite[Ch. 2.4.2]{amannbuch}:  the spaces are complexified, then interpolated and then the `real part' is considered.
\end{rem}

We show that $X$
%=[L^q, W^{-1,q}_D]_{\frac {3}{q}}$,
 and
$\mathbf X$, respectively, together with the occurring operators, 
possess the properties claimed in the discussion.

\begin{lem} \label{l-intreg}
Recall  that $\underline X=[\underline L^q,\underline W^{-1,q}_D]_\frac {3}{q}$. Assume that $\rho$ is an elliptic
 coefficient function, such that 
\begin{equation*} %\label{e-siso}
A_\rho:W^{1,q}_D \to W^{-1,q}_D
\end{equation*}
is a topological isomorphism. Then 
\renewcommand{\labelenumi}{\roman{enumi})}
\begin{enumerate}
\item
we have
%For $\theta \in ]0,1[\setminus \{\frac {1}{q} \}$, one has 
$dom_{\underline  W^{-1,q}_D}\bigl ((\underline A_{ \rho}+1)^{\frac {1}{2}(1- \frac {3}{q})})\bigr )=  [\underline L^q, \underline W^{-1,q}_D]_{\frac {3}{q}}$, and
\item
the embedding
\begin{equation*}% \label{e-fracpower}
\bigl (X,dom_{X}(A_{\rho})\bigr )_{\varsigma,\infty} \hookrightarrow W^{1,q}, \quad \text {if} \quad 
\varsigma \in ]\frac {1}{2}+\frac {3}{2q},1[.
\end{equation*}
\end{enumerate}
\end{lem}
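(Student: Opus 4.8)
The plan is to derive both claims from Proposition \ref{p-ausch} together with standard properties of fractional powers of positive operators and the reiteration theorem for complex interpolation. For part i), I would start from Proposition \ref{p-ausch}\,i), which tells us that $(\underline A_\rho+1)^{1/2}$ is a topological isomorphism from $\underline L^q$ onto $\underline W^{-1,q}_D$; equivalently, $\underline L^q = dom_{\underline W^{-1,q}_D}\bigl((\underline A_\rho+1)^{1/2}\bigr)$ with equivalent norms, and $\underline W^{-1,q}_D$ is the ground space, i.e.\ $dom_{\underline W^{-1,q}_D}\bigl((\underline A_\rho+1)^{0}\bigr)$. Since by Proposition \ref{p-ausch}\,ii) the operator $\underline A_\rho+1$ is positive on $\underline W^{-1,q}_D$, and by Proposition \ref{p-ausch}\,iii) it has bounded imaginary powers there, the classical result of Triebel (cf.\ \cite[Ch.\ 1.15.3]{triebel}) on the identification of complex interpolation spaces between a Banach space and the domain of a positive operator with BIP applies: for $0<\theta<1$,
\begin{equation*}
\bigl[\,\underline W^{-1,q}_D\,,\,dom_{\underline W^{-1,q}_D}\bigl((\underline A_\rho+1)^{1/2}\bigr)\,\bigr]_{\theta} = dom_{\underline W^{-1,q}_D}\bigl((\underline A_\rho+1)^{\theta/2}\bigr).
\end{equation*}
Taking $\theta = 1-\tfrac{3}{q}$ (which lies in $]0,1[$ because $q>3$), the right-hand side becomes $dom_{\underline W^{-1,q}_D}\bigl((\underline A_\rho+1)^{\frac12(1-\frac3q)}\bigr)$, and the left-hand side is $[\underline W^{-1,q}_D,\underline L^q]_{1-\frac3q}=[\underline L^q,\underline W^{-1,q}_D]_{\frac3q}=\underline X$ by symmetry of the complex interpolation functor. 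This gives i). Note that Lemma \ref{l-isocomplex} is what guarantees the hypothesis of Proposition \ref{p-ausch} (isomorphism on the complexified spaces) is actually available here from the real hypothesis.

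For part ii), the target is an embedding of a real interpolation space between $X$ and $dom_X(A_\rho)$ into $W^{1,q}$. The natural route is to interpolate the two isomorphism statements we have: on one end, $dom_X(A_\rho)$ embeds (in fact, I expect, coincides up to norm equivalence) with $dom_{\underline W^{-1,q}_D}\bigl((\underline A_\rho+1)^{\frac12(3-\frac3q)}\bigr)$ — shifting the exponent from part i) by one full power of $A_\rho$, since passing from $X$ to $dom_X(A_\rho)$ adds one power of $A_\rho$ acting in the $W^{-1,q}_D$-scale — while $W^{1,q}_D \cong dom_{\underline W^{-1,q}_D}\bigl((\underline A_\rho+1)^{1/2}\bigr)$... wait, more carefully: by Proposition \ref{p-ausch}\,i), $(\underline A_\rho+1)^{1/2}\colon \underline L^q \to \underline W^{-1,q}_D$ is an isomorphism, and the Lax–Milgram/isomorphism hypothesis $A_\rho\colon W^{1,q}_D\to W^{-1,q}_D$ combined with Proposition \ref{p-ausch}\,i) shows $(\underline A_\rho+1)$ maps $\underline W^{1,q}_D$ isomorphically onto $\underline W^{-1,q}_D$, hence $\underline W^{1,q}_D = dom_{\underline W^{-1,q}_D}(\underline A_\rho+1) = dom_{\underline L^q}\bigl((\underline A_\rho+1)^{1/2}\bigr)$. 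Thus, measuring everything as domains of fractional powers of $\underline A_\rho+1$ in the ground space $\underline W^{-1,q}_D$: $X$ sits at exponent $\frac12(1-\frac3q)$, $dom_X(A_\rho)$ at exponent $\frac12(1-\frac3q)+1 = \frac12(3-\frac3q)$, and $W^{1,q}_D$ at exponent $\frac12$. The real interpolation space $(X,dom_X(A_\rho))_{\varsigma,\infty}$ then, by the standard relation between real interpolation of fractional-power domain scales of a positive operator with BIP (which follows from part i)'s complex result plus reiteration, or directly from \cite[Ch.\ 1.15]{triebel} / standard semigroup theory), embeds continuously into $dom_{\underline W^{-1,q}_D}\bigl((\underline A_\rho+1)^{s}\bigr)$ for every $s < \frac12(1-\frac3q) + \varsigma\cdot 1 = \frac12(1-\frac3q)+\varsigma$. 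To land inside $W^{1,q}_D$ (exponent $\tfrac12$) it suffices that $\frac12(1-\frac3q)+\varsigma > \frac12$, i.e.\ $\varsigma > \frac12 + \frac{3}{2q}$, which is exactly the hypothesis. Composing with the continuous embedding $W^{1,q}_D\hookrightarrow W^{1,q}$ finishes ii).

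The main obstacle, and the step requiring care, is the bookkeeping of which scale each space lives in and verifying that $dom_X(A_\rho)$ really is $dom_{\underline W^{-1,q}_D}\bigl((\underline A_\rho+1)^{\frac12(3-\frac3q)}\bigr)$: one must check that $A_\rho$ acting on $X$ is the part in $X$ of the $\underline W^{-1,q}_D$-realization of $A_\rho$, so that taking its domain genuinely corresponds to adding one power of $\underline A_\rho+1$ in the $\underline W^{-1,q}_D$-scale rather than producing something spurious. This is where one invokes that $X=[\,\underline L^q,\underline W^{-1,q}_D\,]_{3/q}$ is an interpolation space sandwiched between $\underline W^{-1,q}_D$ and $\underline L^q$, on all of which $\underline A_\rho+1$ is consistently defined and positive, so the domains of fractional powers form a single coherent scale and the reiteration/shift identities are legitimate. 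Once that identification is in place, both i) and ii) are immediate consequences of the quoted results on positive operators with bounded imaginary powers, and everything else is routine exponent arithmetic; I would also remark explicitly that the real part is taken at the end as in Remark \ref{r-complexinterp}, so that the stated embeddings hold for the real spaces $X$, $dom_X(A_\rho)$, $W^{1,q}$.
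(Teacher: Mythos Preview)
Your approach is essentially the paper's: part i) is identical (BIP plus \cite[Ch.~1.15.3]{triebel}), and part ii) follows the same exponent-counting in the fractional-power scale, only you base everything in $\underline W^{-1,q}_D$ whereas the paper shifts to the $\underline X$-scale and writes the chain
\[
(\underline X,dom_{\underline X}(\underline A_\rho))_{\varsigma,\infty}\hookrightarrow(\underline X,dom_{\underline X}(\underline A_\rho))_{\frac12(1+\frac3q),1}\hookrightarrow dom_{\underline X}\bigl((\underline A_\rho+1)^{\frac12(1+\frac3q)}\bigr)=\underline W^{1,q}_D.
\]
Both organizations are equivalent via reiteration.

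However, your bookkeeping in part ii) contains a slip. In the $\underline W^{-1,q}_D$-scale you correctly note $\underline W^{1,q}_D=dom_{\underline W^{-1,q}_D}(\underline A_\rho+1)$, which places $\underline W^{1,q}_D$ at exponent $1$, not $\tfrac12$ (the latter is $\underline L^q$). Consequently the threshold inequality should read $\frac12(1-\frac3q)+\varsigma>1$, and solving \emph{this} gives $\varsigma>\tfrac12+\tfrac{3}{2q}$; the inequality you wrote, $\frac12(1-\frac3q)+\varsigma>\frac12$, actually yields only $\varsigma>\tfrac{3}{2q}$. Your final conclusion is correct, but the two displayed intermediate steps are inconsistent with it and should be fixed.
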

\begin{proof}
i) According to Proposition \ref{p-ausch}, $\underline A_{\rho}+1$ is a positive operator on $\underline  W^{-1,q}_D$, 
possessing bounded purely imaginary powers. This gives, according to  \cite[Ch. 1.15.3]{triebel},
\begin{equation*}% \label{e-ffractt0}
dom_{\underline  W^{-1,q}_D}\bigl ((\underline A_{ \rho}+1)^{\frac {1}{2}(1-\frac {3}{q}}))\bigr )
=[\underline  W^{-1,q}_D,dom_{\underline  W^{-1,q}_D}\bigl((\underline A_{ \rho}+1)^\frac {1}{2}\bigr )
]_{1-\frac {3}{q}} 
\end{equation*}
\begin{equation*}% \label{e-ffractt}
=[\underline  W^{-1,q}_D,\underline  L^q]_{1-\frac {3}{q}} =[\underline  L^q,\underline  W^{-1,q}_D]_{\frac {3}{q}}= \underline X.
\end{equation*}
ii) From i), it immediately follows that $\bigl (\underline A_{\rho} +1)^{\frac {1}{2}(1 + \frac {3}{q})}:
\underline W^{1,q}_D \to [\underline L^q,\underline W^{-1,q}_D]_\frac {3}{q}$ is a topological isomorphism; in other words
$dom_{\underline X}\bigl ((\underline A_{ \rho}+1)^{\frac {1}{2}(1 + \frac {3}{q})}\bigr ) =\underline W_D^{1,q}$.
Since $\underline A_\rho$ is -- by interpolation -- also a positive operator on $\underline X$, for 
$\varsigma \in ]\frac {1}{2}+\frac {3}{2q},1[$, we have
\[
(\underline X,dom_X(\underline A_\rho))_{\varsigma,\infty} \hookrightarrow (\underline X,dom_{\underline X}
(\underline A_\rho))_{\frac {1}{2}(1+\frac {3}{q}),1}
 \hookrightarrow dom_{\underline X}((\underline A_\rho+1)^{\frac {1}{2}(1+\frac {3}{q}) }) = \underline W^{1,q}_D,
\]
cf. 
\cite[Thm. 1.3.3 e)]{triebel} and \cite[Thm. 1.15.2]{triebel}). This proves the assertion in the complex case.
But $X$ is a real subspace of $\underline  X$ and $dom_{X}(A_{ \rho})$ is a real subspace of
 $dom_{\underline  X}(\underline A_{ \rho})$. So the real interpolation space 
$\bigl (X,dom_{X}(A_{ \rho})\bigr )_{\varsigma,\infty} $ must be embedded in the `real part' $W^{1,q}_D$ of 
$\underline  W^{1,q}_D$.\\
\end{proof}
\begin{cor} \label{c-interint}
Under Assumption \ref{assu:iso}, we obtain 
\begin{equation*}% \label{e-intembedd}
\bigl (\mathbf X,\mathcal D_\mu\bigr )_{\varsigma,\infty} \hookrightarrow \mathbf W_D^{1,q}, \quad \text {if} \quad 
\varsigma \in ]\frac {1}{2}+\frac {3}{2q},1[.
\end{equation*}
\end{cor}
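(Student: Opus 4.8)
The plan is to reduce Corollary \ref{c-interint} to Lemma \ref{l-intreg}\,ii) componentwise. The space $\mathbf X = X \oplus X$ and $\mathcal D_\mu = dom_X(A_{\mu_1}) \oplus dom_X(A_{\mu_2})$ are direct sums, and the real interpolation functor $(\cdot,\cdot)_{\varsigma,\infty}$ commutes with finite direct sums, so that
\[
\bigl(\mathbf X, \mathcal D_\mu\bigr)_{\varsigma,\infty} = \bigl(X, dom_X(A_{\mu_1})\bigr)_{\varsigma,\infty} \oplus \bigl(X, dom_X(A_{\mu_2})\bigr)_{\varsigma,\infty}.
\]
Hence it suffices to treat each summand separately.

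Next I would check that the hypotheses of Lemma \ref{l-intreg} are met for $\rho = \mu_k$, $k=1,2$. By Assumption \ref{assu:iso}, $A_{\mu_k}:W^{1,q}_D \to W^{-1,q}_D$ is a topological isomorphism for the chosen $q \in \,]3,4[$, which is exactly the standing hypothesis of Lemma \ref{l-intreg}. Also $q \ge 2$, so Proposition \ref{p-ausch} applies and the fractional powers used in the lemma are well-defined. Therefore Lemma \ref{l-intreg}\,ii) yields, for each $k$ and for $\varsigma \in \,]\tfrac12 + \tfrac{3}{2q}, 1[$, the embedding $\bigl(X, dom_X(A_{\mu_k})\bigr)_{\varsigma,\infty} \hookrightarrow W^{1,q}$.

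Finally I would assemble the pieces: taking the direct sum of the two embeddings from the previous step and using the identification of $\bigl(\mathbf X, \mathcal D_\mu\bigr)_{\varsigma,\infty}$ as a direct sum, we get
\[
\bigl(\mathbf X, \mathcal D_\mu\bigr)_{\varsigma,\infty} \hookrightarrow W^{1,q} \oplus W^{1,q} = \mathbf W^{1,q}_D,
\]
where the last equality is just the notational convention $\mathbf W^{1,q}_D = W^{1,q}_D \oplus W^{1,q}_D$ introduced earlier (recall that each summand lands in $W^{1,q}_D$ in the proof of Lemma \ref{l-intreg}\,ii), not merely $W^{1,q}$). This is the claimed embedding for $\varsigma \in \,]\tfrac12 + \tfrac{3}{2q}, 1[$. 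I do not expect any real obstacle here: the only point requiring a word of care is the compatibility of real interpolation with direct sums (standard, e.g.\ via the $K$-functional, which is additive over direct sums), and keeping track that the target is the Dirichlet space $\mathbf W^{1,q}_D$ rather than $\mathbf W^{1,q}$.
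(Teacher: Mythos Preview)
Your proposal is correct and is exactly the argument the paper intends: the corollary is stated without proof immediately after Lemma \ref{l-intreg}, and the only content is applying part ii) componentwise to $\rho=\mu_1,\mu_2$ via Assumption \ref{assu:iso}, together with the trivial compatibility of real interpolation with finite direct sums. Your observation that the proof of Lemma \ref{l-intreg}\,ii) actually lands in $W^{1,q}_D$ (not just $W^{1,q}$) is the right way to reconcile the lemma's stated target with the $\mathbf W^{1,q}_D$ in the corollary.
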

%\begin{proof}
%The assertion follows from Lemma \ref{l-intreg} by an interpolation principle for cross products, cf. 
%\cite[Prop. 2.3.3]{amannbuch}.
%\end{proof}
For convenience, we defined the recombination terms $r^\Gamma$ and $r^\Pi$ as $L^4(\Gamma,\sigma)$-valued 
and $L^4(\Pi,\sigma)$-valued, respectively,
since one has an intuitive understanding of this condition. Since the whole system will be considered in
the space $\mathbf X$, in the next result, we connect Assumption \ref{recombGamma} with spaces of type $\mathbf X$. 

\begin{lem} \label{t-propX}
	Let $\Gamma$ and $\Pi$ be as in Assumption \ref{spatial}. Then 
			\renewcommand{\labelenumi}{\roman{enumi})}
			\begin{enumerate}
\item
we have the embedding
$L^{\frac {q}{2} } \hookrightarrow X$, and
\item
there are continuous embeddings
\[
T^*_\Gamma \colon L^4(\Gamma,\sigma) \to X, \quad \text{and } \quad T^*_\Pi \colon L^4(\Pi,\sigma) \to X.
\]
given by the adjoints of the trace operators $T_\Gamma, T_\Pi$. 
\end{enumerate}
\end{lem}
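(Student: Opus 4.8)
\emph{Part i).} The plan is to verify the chain of continuous embeddings
\[
L^{q/2} \hookrightarrow W^{-1,q}_D \hookrightarrow X = [L^q, W^{-1,q}_D]_{3/q},
\]
so that $L^{q/2}$ interpolates into $X$. The first embedding $L^{q/2}\hookrightarrow W^{-1,q}_D$ is essentially a Sobolev embedding in dual form: since $q>3$ we have $W^{1,q'}_D \hookrightarrow W^{1,q'} \hookrightarrow L^{(q/2)'}$ (here $q' = q/(q-1) < 3/2$ so $(q'){}^* = 3q'/(3-q')$, and one checks $(q/2)' = q/(q-2) \le 3q'/(3-q')$ for $q\le 6$, which holds since $q<4$), and dualizing gives $L^{q/2} = (L^{(q/2)'})^* \hookrightarrow (W^{1,q'}_D)^* = W^{-1,q}_D$. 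For the second embedding, I would note that $[L^q, W^{-1,q}_D]_{3/q}$ lies between $L^q$ and $W^{-1,q}_D$ in the sense that $L^q \hookrightarrow X \hookrightarrow W^{-1,q}_D$ (the latter because the pair is ordered $W^{-1,q}_D \hookleftarrow L^q$ is false — rather $L^q \hookrightarrow W^{-1,q}_D$, so both endpoints embed in $W^{-1,q}_D$ and hence so does the interpolation space). Combining, $L^{q/2}\hookrightarrow W^{-1,q}_D$ and we still need $L^{q/2}\hookrightarrow X$ itself, not merely into the larger endpoint; for this I would instead use that $L^{q/2} \hookrightarrow [L^q, L^{q/2}]_{3/q}$ trivially and that $[L^q,L^{q/2}]_{3/q} \hookrightarrow [L^q, W^{-1,q}_D]_{3/q} = X$ by monotonicity of the complex interpolation functor in the second endpoint, using $L^{q/2}\hookrightarrow W^{-1,q}_D$. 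This avoids computing the interpolation exponent of $L^q$ and $L^{q/2}$ and reduces everything to the single Sobolev duality $L^{q/2}\hookrightarrow W^{-1,q}_D$.

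\emph{Part ii).} For the trace operators, the starting point is the classical trace theorem on the $(d-1)$-sets $\Gamma$ and $\Pi$ (Assumption \ref{spatial}): by \cite[Ch.~V]{jonsson}, the trace map $T_\Gamma \colon W^{1,q'}_D \to L^{q'}(\Gamma,\sigma)$ (and likewise $T_\Pi$) is bounded for suitable $q'$; more precisely, since $q>3$ we have $q' = q/(q-1)<3/2$ and the trace of $W^{1,q'}$ on a $2$-set lands in a Besov space $B^{q'-1/q'}_{q',q'}(\Gamma) \hookrightarrow L^{4/3}(\Gamma,\sigma)$ — here the key numerical check is that the trace integrability exponent exceeds $4/3 = 4'$, equivalently $q'$ is large enough, which follows from $q<4$. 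Dualizing the bounded map $T_\Gamma \colon W^{1,q'}_D \to L^{4/3}(\Gamma,\sigma)$ yields a bounded adjoint $T^*_\Gamma \colon L^4(\Gamma,\sigma) \to W^{-1,q}_D$, and similarly for $\Pi$. To land in $X$ rather than $W^{-1,q}_D$, I would exploit extra smoothness: $L^4(\Gamma,\sigma)$ is the dual of $L^{4/3}(\Gamma,\sigma)$, and the trace actually maps the \emph{smaller} space $W^{1,q'}_D$ — and by interpolation the intermediate space $[W^{1,q'}_D, L^{q'}]_{\theta}$ for $\theta$ close to $1$, which is the predual of $X$ up to the underline/complexification — boundedly into $L^{4/3}(\Gamma,\sigma)$, provided the loss of $\theta$-many derivatives still leaves the Besov trace space inside $L^{4/3}$. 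Concretely: $X^* = [L^{q'}, W^{1,q'}_D]_{3/q}$ (by duality of complex interpolation and reflexivity), a space of roughly $3/q$ derivatives in $L^{q'}$; its trace on a $2$-set sits in $B^{3/q - 1/q'}_{q',q'}(\Gamma)$, and one verifies $3/q - 1/q' > 0$ (i.e. $3/q > (q-1)/q$, i.e. $q<4$ — exactly the standing range!) and that this Besov space embeds into $L^{4/3}(\Gamma,\sigma)$ by the Sobolev embedding on the $2$-set. Dualizing $T_\Gamma \colon X^* \to L^{4/3}(\Gamma,\sigma)$ then gives the desired $T^*_\Gamma \colon L^4(\Gamma,\sigma) \to X$.

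\emph{Main obstacle.} The routine parts (Hölder/Sobolev exponent arithmetic, duality of interpolation scales) are not where the difficulty lies; the delicate point is \textbf{identifying $X^* = [L^{q'}, W^{1,q'}_D]_{3/q}$ and computing the trace space of this fractional Sobolev space on the $(d-1)$-set $\Gamma$} (and $\Pi$), including keeping track of the fact that $W^{1,q'}_D$ incorporates a vanishing-trace condition on $D$, so one must check that $\Gamma$ (resp. $\Pi$) is well-separated from $D$ or otherwise argue that the trace on $\Gamma$ is unaffected. The second subtlety is the consistency of the complexification: $X$ is defined as a real interpolation-of-complexification space (Remark \ref{r-complexinterp}), so the duality $X^* = \underline{X}^*|_{\mathrm{real}}$ and the trace theory must be applied on $\underline X$ and then restricted to real parts, exactly as in the proof of Lemma \ref{l-intreg}. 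Once $X^*$ is pinned down and the Besov embedding $B^{s}_{q',q'}(\Gamma)\hookrightarrow L^{4/3}(\Gamma,\sigma)$ with $s = 3/q - 1/q' > 0$ is in hand, the dualization to obtain $T^*_\Gamma$ and $T^*_\Pi$ is immediate.
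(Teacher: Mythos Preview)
Your plan for part ii) is essentially the paper's: identify the predual $X^* = [L^{q'},W^{1,q'}_D]_{3/q}$ via duality of complex interpolation, show that the trace on $\Gamma$ (resp.\ $\Pi$) maps $X^*$ continuously into $L^{4/3}(\Gamma,\sigma)$ (resp.\ $L^{4/3}(\Pi,\sigma)$) using that $3/q>1/q'$ precisely when $q<4$, and dualize. The paper handles $\Gamma$ by localization and bi-Lipschitz flattening (so that $[L^{q'},W^{1,q'}]_{3/q}=H^{3/q,q'}$ can be invoked locally in the half-space), and $\Pi$ via a common extension operator to $\R^3$ followed by $H^{3/q,q'}(\R^3)\hookrightarrow W^{\tau,q'}(\R^3)$ and the Jonsson--Wallin trace on $(d-1)$-sets. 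Your Besov-space formulation is a harmless variant of this.

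Part i), however, contains a genuine gap. Your pivot to the monotonicity argument relies on the claim that
\[
L^{q/2}\hookrightarrow [L^q,L^{q/2}]_{3/q}
\]
holds ``trivially''. It does not hold at all: by the Riesz--Thorin identification, $[L^q,L^{q/2}]_{3/q}=L^{p}$ with $\tfrac{1}{p}=\tfrac{1+3/q}{q}$, i.e.\ $p=\tfrac{q^2}{q+3}$, and for every $q>3$ one has $p>q/2$. On a bounded domain this gives $L^{p}\hookrightarrow L^{q/2}$, the \emph{reverse} of what you need, so the chain $L^{q/2}\hookrightarrow [L^q,L^{q/2}]_{3/q}\hookrightarrow X$ breaks at the first step. (Replacing the second endpoint $L^{q/2}$ by $L^{r}$ and solving $[L^q,L^r]_{3/q}=L^{q/2}$ forces $r=\tfrac{3q}{q+3}$, and then $L^r\hookrightarrow W^{-1,q}_D$ is exactly the \emph{critical} Sobolev embedding $W^{1,q'}\hookrightarrow L^{r'}$, so the argument can be repaired, but not as written.)

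The paper avoids this by doing for i) precisely what you do for ii): it passes to the predual, uses the extension operator and the identification $[L^{q'}(\R^3),W^{1,q'}(\R^3)]_{3/q}=H^{3/q,q'}(\R^3)$, and then the Sobolev embedding $H^{3/q,q'}(\R^3)\hookrightarrow L^{(q/2)'}(\R^3)$ (again sharp at $q$). Dualizing yields $L^{q/2}\hookrightarrow X$ directly. Since you already set up this machinery for part ii), the cleanest fix is to reuse it for part i).
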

\begin{proof}
i) According to the duality formula for interpolation \cite[Ch. 1.11.13]{triebel}, 
\[
[L^q,W^{-1,q}_D]_\theta =[L^{q'},W^{1,q'}_D]^*_\theta,
\]
and taking into account Remark \ref{r-complexinterp},
the assertion is equivalent to $[\underline L^{q'},\underline W^{1,q'}_D]_{\frac {3}{q}} \hookrightarrow \underline L^{(\frac {q}{2})' }$
. Exploiting the fact that
the spaces $\underline L^{q'}$ and $\underline W^{1,q'}_D$ admit a common extension operator to 
$\underline L^{q'}(\R^3)$ and $\underline W^{1,q'}(\R^3)$, respectively,
and the interpolation equality
\[
[\underline L^{q'}(\R^3),\underline W^{1,q'}_D(\R^3)]_{\frac {3}{q} } = \underline H^{\frac {3}{q},q'}(\R^3),
\]
one obtains,
in combination with the embedding $\underline H^{\frac {3}{q},q'}(\R^3) \hookrightarrow \underline L^{(\frac {q}{2})'}(\R^3)$, the 
first assertion.\\
ii) We prove the dual statements, i.e. the existence of trace mappings
\begin{equation} \label{e-trace}
%H^{\frac {3}{q},q'}_D \hookrightarrow H^{\frac {3}{q},q'}
T_\Gamma \colon X^*=[L^{q'}, W^{1,q'}_D]_{\frac {3}{q}}
\to L^{\frac {4}{3}}(\Gamma,\sigma), \quad \text{and} \quad
%H^{\frac {3}{q},q'}_D \hookrightarrow H^{\frac {3}{q},q'} 
T_\Pi \colon X^*
\to L^{\frac {4}{3}}(\Pi,\sigma),
\end{equation} 
thereby again taking into account Remark \ref{r-complexinterp}.
In view of $q <4$, we have the inequalities $\frac {3}{q} > \frac {1}{q'} = 1-\frac {1}{q}$
and $q' > \frac {4}{3}$. We establish the first trace mapping in \eqref{e-trace}. First, one may localize the setting.
Then, thanks to the Lipschitz property of $\partial \Omega$ in a neighbourhood of $\overline \Gamma$,  the bi-Lipschitzian boundary charts can be applied, observing that the quality of $[\underline L^{q'}, \underline W^{1,q'}_D]_{\frac {3}{q}} \hookrightarrow
[\underline L^{q'},\underline  W^{1,q'}]_{\frac {3}{q}}$  is preserved under the corresponding transformation, so that the boundary part under consideration is `flat'. Hence, $[\underline L^{q'},\underline  W^{1,q'}]_{\frac {3}{q}} = \underline H^{\frac {3}{q},q'}$ can be applied locally, as in the half space case, 
\cite[Thm. 2.10.1]{triebel}, in order to see that the trace belongs to $\underline L^{q'}(\Gamma,\sigma)\hookrightarrow
 \underline L^{\frac {4}{3}}(\Gamma,\sigma)$.\\
We now establish the second trace mapping in \eqref{e-trace}. The starting point is the observation that
the properties of $\Omega$, cf.\ Assumption \ref{spatial}, allow for a continuous extension operator 
$\mathfrak E:\underline L^{q'}(\Omega) \to 
\underline L^{q'}(\R^3)$ the restriction of which to $\underline W_D^{1,q'}(\Omega)$ provides a continuous operator into
$\underline W^{1,q'}(\R^3)$, cf.\ \cite[Lemma 3.2]{ausch}. By interpolation, this gives a continuous extension operator 
\[
\hat {\mathfrak E}:\underline X^*=[\underline L^{q'},\underline  W^{1,q'}_D]_{\frac {3}{q}} \to 
[\underline L^{q'}(\R^3),\underline  W^{1,q'}(\R^3)]_{\frac {3}{q}}=\underline H^{\frac {3}{q}, q'}(\R^3).
\] Taking $\tau \in ]\frac {1}{q'},\frac {3}{q}[\neq
\emptyset$, we have the embedding $\underline H^{\frac {3}{q},q'}(\R^3) \hookrightarrow \underline W^{\tau,q'}(\R^3)$ into the 
corresponding Sobolev-Slobodetskii
space, cf. \cite[Ch. 4.6.1]{triebel}. Now we consider $\overline \Pi$, the closure of $\Pi$, instead of $\Pi$, 
and exploit that
$\overline \Pi$  is also a $2$-set, and $\overline \Pi \setminus \Pi$ is negligible with respect to the two-dimensional
Hausdorff measure, cf. \cite[Ch. VIII.1.1]{jons}). Then we use the trace mapping 
$\underline W^{\tau,q'}(\R^3) \hookrightarrow \underline L^{q'}(\overline \Pi,\sigma)\hookrightarrow \underline L^{\frac {4}{3}}
(\overline \Pi,\sigma)$, cf. \cite[Ch. V.1.1]{jons}. Finally, the definition of the trace (cf. \cite[Ch. I.2]{jons})
 as the limit of averages (pointwise a.e.\ with respect to $\sigma$) tells us that the trace of any function $\psi \in 
H^{\frac {3}{q},q'}$
on points of $\Pi$ is independent of the extension $\hat {\mathfrak E} \psi$, because $\Omega$ is open and 
$\Pi \subset \Omega$.
\end{proof}

\subsection{The nonlinear Poisson equation}

\label{sec:nl-poisson}
The aim of this subsection is to express the dependence of the homogeneous part of the electrostatic potential 
$\tilde{\varphi}$, cf. \eqref{splitPot}, in its dependence of the homogeneous quasi Fermi levels $\phi$. With 
$u_k(t) = \mathcal{F}_k\bigl (\phi(t) + \Phi^d_k(t) + (-1)^k(\tilde{\varphi}(t) + \varphi_d(t))\bigr )$ for some 
$\varphi_d(t), \Phi^d_k(t) \in L^\infty$ depending on the data, cf.\ Subsection \ref{sec:diribv}, this means 
that we need to solve the nonlinear Poisson problem 
\begin{equation}\label{nlPoiss}
	     P \tilde{\varphi}
	     =   \mathcal{F}_1(\omega_1 - \tilde{\varphi} )
	     -     \mathcal{F}_2(\omega_2 +\tilde{\varphi})
	\end{equation}
and to quantify the dependence of the solution of given functions $\omega \in \mathbf{L}^{\infty}$. 
%The 
 %aim of this is to eliminate the electrostatic potential, goverened by Poisson's equation, completely and thus 

With this analysis, we can then consider van Roosbroeck's equations as a quasilinear nonlocal problem in the unknowns $\Phi$ only. 
\begin{thm} \label{t-potentialto}
	\renewcommand{\labelenumi}{\roman{enumi})}
For every pair $\omega = (\omega_1,\omega_2) \in \mathbf L^\infty$ there is exactly one element $\tilde \varphi \in W^{1,q}_D$ that satisfies \eqref{nlPoiss}. We write $\tilde \varphi = \mathcal S(\omega)$. Then,
 			\begin{enumerate}
 \item
the mapping $\mathcal S:\mathbf L^\infty \to W^{1,q}_D$ is
continuously differentiable,
\item
the mapping $\mathcal S$, viewed between $\mathbf L^\infty$ and $L^\infty$, is globally Lipschitzian
with Lipschitz constant not larger than $1$, and
\item
$\mathcal S:\mathbf L^\infty \to W^{1,q}_D$ is boundedly Lipschitzian.
\end{enumerate}
\end{thm}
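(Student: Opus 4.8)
\emph{Overall strategy.} The plan is to first solve \eqref{nlPoiss} in the Hilbert-space setting $W^{1,2}_D \to W^{-1,2}_D$ by combining an a~priori $L^\infty$-bound with monotone operator theory, then to bootstrap the solution into $W^{1,q}_D$ by means of Assumption~\ref{assu:iso}, and finally to read off (i)--(iii). For fixed $\omega$ write $b_\omega(\psi):=\mathcal F_2(\omega_2+\psi)-\mathcal F_1(\omega_1-\psi)$; by Assumption~\ref{assu:distri} each $\mathcal F_k$ is strictly increasing, so $b_\omega(\cdot)$ is pointwise strictly increasing, and solving \eqref{nlPoiss} is the same as finding a zero of $\psi\mapsto P\psi+b_\omega(\psi)$.

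\emph{Existence, uniqueness and $W^{1,q}_D$-regularity.} Using $\mathcal F_k(s)\to+\infty$ as $s\to+\infty$ together with monotonicity, one picks constants $c_\pm=c_\pm(\|\omega\|_{\mathbf L^\infty})>0$ with $b_\omega(c_+)\ge 0$ and $b_\omega(-c_-)\le 0$ a.e.\ on $\Omega$. To produce a solution, replace each $\mathcal F_k$ outside a large interval $[-R,R]$ (with $R>\max(c_+,c_-)+\|\omega\|_{\mathbf L^\infty}$) by a globally Lipschitz increasing extension $\mathcal F_k^R$; then $\psi\mapsto P\psi+b_\omega^R(\psi)\colon W^{1,2}_D\to W^{-1,2}_D$ is bounded, continuous, strictly monotone and coercive (coercivity from $\langle P\psi,\psi\rangle\ge\underline c\,\|\psi\|_{W^{1,2}}^2$, cf.\ Remark~\ref{r-coerc}, together with a monotone shift by $b_\omega^R(0)\in L^2$), hence bijective by the theorem of Browder--Minty. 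Testing the resulting equation with $(\psi-c_+)^+\in W^{1,2}_D$, resp.\ with $-(\psi+c_-)^-$, and using $b_\omega^R(\psi)=b_\omega(\psi)\ge b_\omega(c_+)\ge 0$ on $\{\psi>c_+\}$ (resp.\ $\le b_\omega(-c_-)\le 0$ on $\{\psi<-c_-\}$), coercivity of $\varepsilon$, and the correct sign of the Robin term on $\Gamma$, one concludes that these truncations vanish; hence $\|\psi\|_{L^\infty}\le\max(c_+,c_-)<R$, the modification is inactive, and $\psi$ solves \eqref{nlPoiss}. Then $b_\omega(\psi)\in L^\infty\hookrightarrow L^{q/2}\hookrightarrow W^{-1,q}_D$, so $\psi=-P^{-1}b_\omega(\psi)\in W^{1,q}_D$ by Assumption~\ref{assu:iso}. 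Uniqueness: since $q>3$, any solution in $W^{1,q}_D$ lies in $L^\infty\cap W^{1,2}_D$, so the difference $w$ of two solutions satisfies $\langle Pw,w\rangle+\langle b_\omega(\psi)-b_\omega(\psi'),w\rangle=0$, and coercivity of $P$ plus monotonicity of $b_\omega$ force $w=0$. This defines $\mathcal S(\omega):=\tilde\varphi$.

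\emph{Proof of (i).} Since $q>3$, $W^{1,q}_D\hookrightarrow L^\infty$, so $\mathcal G\colon\mathbf L^\infty\times W^{1,q}_D\to W^{-1,q}_D$, $\mathcal G(\omega,\psi):=P\psi-\mathcal F_1(\omega_1-\psi)+\mathcal F_2(\omega_2+\psi)$, factors through the $C^1$ Nemytskii maps $L^\infty\ni v\mapsto\mathcal F_k(v)\in L^\infty$ (here $\mathcal F_k\in C^2$ enters) composed with $L^\infty\hookrightarrow L^{q/2}\hookrightarrow W^{-1,q}_D$; hence $\mathcal G\in C^1$. Its partial derivative is $\partial_\psi\mathcal G(\omega,\psi)=P+m\,\cdot$, where $m:=\mathcal F_1'(\omega_1-\psi)+\mathcal F_2'(\omega_2+\psi)\in L^\infty$ is bounded below by a positive constant (Assumption~\ref{assu:distri}); the multiplication operator $m\,\cdot\colon W^{1,q}_D\to L^q\hookrightarrow W^{-1,q}_D$ is a compact perturbation of the isomorphism $P$, so $P+m\,\cdot$ is Fredholm of index $0$, and it is injective (test $(P+m)\psi=0$ with $\psi$, using coercivity of $P$ and $m>0$), hence a topological isomorphism $W^{1,q}_D\to W^{-1,q}_D$. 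The implicit function theorem, together with the uniqueness just established, yields $\mathcal S\in C^1(\mathbf L^\infty;W^{1,q}_D)$.

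\emph{Proof of (ii), (iii), and the main obstacle.} For $\omega,\omega'$ put $w:=\mathcal S(\omega)-\mathcal S(\omega')\in W^{1,q}_D\subset W^{1,2}_D$ and $\delta:=\|\omega-\omega'\|_{\mathbf L^\infty}$. Subtracting the two equations gives $Pw=\mathcal F_1(\omega_1-\mathcal S\omega)-\mathcal F_1(\omega_1'-\mathcal S\omega')-\mathcal F_2(\omega_2+\mathcal S\omega)+\mathcal F_2(\omega_2'+\mathcal S\omega')$; on $\{w>\delta\}$ one has $\omega_1-\mathcal S\omega<\omega_1'-\mathcal S\omega'$ and $\omega_2+\mathcal S\omega>\omega_2'+\mathcal S\omega'$, so (as $\mathcal F_k$ are increasing) the right-hand side is $\le 0$ there, while testing with $(w-\delta)^+\in W^{1,2}_D$ makes the left-hand side $\ge\underline\varepsilon\,\|\nabla(w-\delta)^+\|_{L^2}^2\ge 0$ (again the Robin term has the right sign on $\Gamma$ since $w>\delta\ge0$ there); hence $(w-\delta)^+=0$, and symmetrically $\|w\|_{L^\infty}\le\delta$, which is (ii). For (iii), on $\{\|\omega\|_{\mathbf L^\infty}\le\mathcal R\}$ part (ii) gives $\|\mathcal S\omega\|_{L^\infty}\le\|\mathcal S(0)\|_{L^\infty}+\mathcal R$, so all arguments of the $\mathcal F_k$ above stay in a fixed compact interval on which $\mathcal F_k$ is Lipschitz; combined with (ii) this bounds the displayed right-hand side in $L^\infty$, hence in $L^{q/2}\hookrightarrow W^{-1,q}_D$, by $C(\mathcal R)\,\|\omega-\omega'\|_{\mathbf L^\infty}$, and applying the bounded operator $P^{-1}$ (Assumption~\ref{assu:iso}) yields $\|w\|_{W^{1,q}_D}\le C'(\mathcal R)\,\|\omega-\omega'\|_{\mathbf L^\infty}$. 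I expect the a~priori $L^\infty$-estimate to be the technical heart: it must be uniform in $\|\omega\|_{\mathbf L^\infty}$ and compatible with the non-smooth mixed boundary conditions encoded in $P$, in particular with the correct bookkeeping of the Robin term on $\Gamma$ when testing with truncations.
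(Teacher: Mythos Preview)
Your argument is correct and, for parts (ii) and (iii), is essentially identical to the paper's: both test the difference equation with a truncation of $w=\mathcal S(\omega)-\mathcal S(\omega')$ at level $\delta=\|\omega-\omega'\|_{\mathbf L^\infty}$ and use the sign structure coming from the monotonicity of the $\mathcal F_k$ together with coercivity of $P$ (the paper packages both signs into a single two-sided truncation $h$, you do the one-sided $(w-\delta)^+$ and symmetrise). Part (iii) is then the same bootstrap via $P^{-1}\colon W^{-1,q}_D\to W^{1,q}_D$.

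There are two genuine but minor methodological differences worth noting. First, for the invertibility of the linearisation $\partial_\psi\mathcal G=P+m\,\cdot$ in (i), the paper argues by Lax--Milgram in $W^{1,2}_D$ and then bootstraps the solution of $(P+m)\psi=f$ into $W^{1,q}_D$ using $m\psi\in L^2\hookrightarrow W^{-1,q}_D$; your Fredholm-plus-injectivity argument via the compact embedding $W^{1,q}_D\hookrightarrow L^q$ is an equally clean alternative. Second, you are more explicit about \emph{existence}: you produce a solution via a Lipschitz truncation of the $\mathcal F_k$ and Browder--Minty, then remove the truncation by the a~priori $L^\infty$-bound. The paper's proof invokes the implicit function theorem directly and does not spell out a base point or continuation argument; the monotone-operator structure you exploit is, however, exactly what the paper records in the paragraph \emph{after} the proof (the operator $\mathcal P_\omega$ and its numerical use), so the underlying ideas coincide. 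One small point to tighten in your write-up of (ii): from $\|\nabla(w-\delta)^+\|_{L^2}=0$ alone you do not yet get $(w-\delta)^+=0$; you should invoke the full coercivity of $P$ on $W^{1,2}_D$ (Remark~\ref{r-coerc}), which uses either the Dirichlet part $D$ or the strictly positive Robin coefficient, via $\langle Pw,(w-\delta)^+\rangle\ge\langle P(w-\delta)^+,(w-\delta)^+\rangle$.
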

\begin{proof}
We first apply the implicit function theorem. In particular, define    
\begin{equation*} 
    \mathcal{K} \colon \mathbf{L}^{\infty} \times W^{1,q}_D \to W^{-1,q}_D 
	\end{equation*}
	by 
	\begin{equation*}
	\mathcal{K} (\omega, \tilde{\varphi}) 
    = 
    P \tilde{\varphi}
    -    \mathcal{F}_1(\omega_1 - \tilde{\varphi} )
    +     \mathcal{F}_2(\omega_2 +\tilde{\varphi}).
  \end{equation*}
We show that $\mathcal{K}$ is continuously differentiable and that the partial derivatives
with respect to $\tilde \varphi$ are  topological isomorphisms between ${W}_{D}^{1,q}$ and ${W}_{D}^{-1,q}$.
Then the level set $\mathcal{K}(\omega,\mathcal{S}(\omega)) 
    =
    0 $ implicitly defines the solution operator 
	\begin{equation}\label{defS}
	\mathcal{S} \colon \mathbf{L}^{\infty} \to W^{1,q}_D 
	\end{equation}
	of \eqref{nlPoiss} and $\mathcal{S}$ is continuously differentiable. 
	%It is sufficient to check that $\mathcal{K}_\omega$ is continuously differentiable and 
  The partial derivatives of $\mathcal{K}$ are given by
  \begin{eqnarray*}
  %  \label{eq:grad-K-varphi}
    \partial_{\tilde{\varphi}}
    \mathcal{K}(\omega,\tilde{\varphi})
    & = &
    P
    +
    \sum_{k=1}^2
     \mathcal{F}'_k
    (\omega_k 
    + (-1)^k
     \tilde{\varphi})
    \in
    \mathcal{L}(
    {W}_{D}^{1,q};
    {W}_{D}^{-1,q}
    ),
    \\
  %  \label{eq:grad-K-nlpfixed}
    \partial_{\omega_k}
    \mathcal{K}(\omega,\tilde{\varphi})
    & = &
 (-1)^k   \mathcal{F}'_k
    (\omega_k 
    + (-1)^k
    \tilde{\varphi})
    \in
    \mathcal{L}(
    L^{\infty};
    {W}_{D}^{-1,q}
    ),
  \end{eqnarray*}
  and they depend continuously on $\omega$ and $\tilde{\varphi}$. Note that here the expressions $\mathcal{F}'_k(\omega_k 
    + (-1)^k
    \tilde{\varphi}) \in L^{\infty}$ are to be understood as multiplication operators.\\
Consider the equation
\begin{equation} \label{e-laxmil}
 P \psi
    +
    \sum_{k=1}^2
     \mathcal{F}'_k(\omega_k + (-1)^k \tilde{\varphi})\psi=f \in {W}_{D}^{-1,q}.
\end{equation}                               %
Since 
\begin{equation*}
\sum_{k=1}^2 \mathcal{F}'_k(\omega_k + (-1)^k \tilde{\varphi})
\end{equation*} is a non-negative function in $L^\infty$, 
\eqref{e-laxmil} has a unique solution $ \psi \in {W}_{D}^{1,2}$
by the Lax-Milgram-Lemma. Moreover, $\sum_{k=1}^2  \mathcal{F}'_k
    (\omega_k     + (-1)^k \tilde{\varphi}) \psi$ is then contained in $L^2 \hookrightarrow 
{W}_{D}^{-1,q}$ and $P\colon{W}_{D}^{1,q}
\rightarrow {W}_{D}^{-1,q}$ is a topological isomorphism, so a 
rearrangement of terms in \eqref{e-laxmil} gives
$ \psi \in {W}_{D}^{1,q}$. It follows that $\partial_{\tilde{\varphi}} \mathcal{K}(\omega,\tilde{\varphi})$ is an 
isomorphism of ${W}_{D}^{1,q}$ and ${W}_{D}^{-1,q}$. This proves i). \\
ii) Given $\omega, \kappa \in \mathbf{L}^\infty$, consider the solutions $\tilde \varphi = \mathcal{S}(\omega) \in W^{1,q}_D$,
 $\tilde \psi = \mathcal{S}(\kappa) \in W^{1,q}_D$  -- each being even uniformly continuous. They satisfy 
	\begin{equation}\label{PoissLip}
		P(\tilde \varphi - \tilde \psi) = \mathcal{F}_1 (\omega_1 - \tilde \varphi) - \mathcal{F}_2(\omega_2 +
 \tilde \varphi) - \mathcal{F}_1(\kappa_1 - \tilde \psi) + \mathcal{F}_2(\kappa_2 + \tilde \psi) 
			\end{equation}
			in $W^{-1,q}_D \hookrightarrow W^{-1,2}_D$. Define 
			\begin{equation*}
				d = \max (\max (\Vert (\omega_1 - \kappa_1)^+\Vert_\infty, \Vert (\kappa_2 - \omega_2)^+\Vert_\infty),
 \max (\Vert (\kappa_1 - \omega_1)^+\Vert_\infty, \Vert (\omega_2 - \kappa_2)^+\Vert_\infty)),
				\end{equation*}
				and note that $d \leq \Vert \omega - \kappa \Vert_{\mathbf L^\infty}$. 
				Now let 
				\begin{equation*}
					h = \left\{ \begin{array}{ll}
					     \tilde \varphi - \tilde \psi - d, & \text{if }\tilde \varphi-\tilde \psi > d, \\
						 \tilde \varphi -\tilde \psi + d, & \text{if } \tilde \varphi - \tilde \psi < -d, \\
						 0, & \text{otherwise}.
					\end{array} \right.
					\end{equation*}
Taking into account the uniform continuity of $\tilde \varphi, \tilde \psi$, it is not hard to see that $h$ is an admissable test function
 in $W^{1,2}_D \cap L^\infty$.
% $h$ equals $h_1+h_2$ with
%$h_1= \max(0,\tilde \varphi -\tilde \psi -d)$ and $h_2= \min(0,\tilde \varphi -\tilde \psi +d)$. Standard results tell us that
%$h_1, h_2 \in W^{1,2}$. But the property $d>0$ and the continuity of $\tilde \varphi - \tilde \psi \in W_D^{1,q}$ imply that both,
%$h_1$ and $h_2$ identically vanish in a neighbourhood of $D$, and, hence, even belong to $W^{1,2}_D$. \\
Denote by $\Omega_+ = \{ x \in \Omega : h(x) > 0\}$, $\Omega_- = \{ x \in \Omega : h(x) < 0\}$ the (open) subsets of $\Omega$
 where $h$ is positive or negative, respectively. We apply \eqref{PoissLip}
 to $h$, cf. \eqref{e-poiss}, 
					\begin{equation*}%\label{h-est}
						\begin{array}{rl}
&\int_\Omega (\varepsilon \nabla h)\cdot\nabla h \,\mathrm{d}x 
+\int_\Gamma \varepsilon_\Gamma (\tilde \varphi - \tilde \psi) h \,\mathrm{d}\sigma \\
= &  \int_{\Omega_+} (\mathcal{F}_1 (\omega_1 - \tilde \varphi) - \mathcal{F}_1 (\kappa_1 - \tilde \psi)) h \,\mathrm{d}x - 
\int_{\Omega_+} (\mathcal{F}_2 (\omega_2 + \tilde \varphi) - \mathcal{F}_2 (\kappa_2 + \tilde \psi) h \,\mathrm{d}x \\
& + \int_{\Omega_-} (\mathcal{F}_1 (\omega_1 - \tilde \varphi) - \mathcal{F}_1 (\kappa_1 - \tilde \psi)) h \,\mathrm{d}x -
 \int_{\Omega_-} (\mathcal{F}_2 (\omega_2 + \tilde \varphi) - \mathcal{F}_2 (\kappa_2 + \tilde \psi) h \,\mathrm{d}x.
\end{array}
\end{equation*}
Clearly, the first addend on the left-hand-side is non-negative. Secondly, the function $(\tilde \varphi - \tilde \psi) h$
is non-negative on $\Omega$, so its trace on $\Gamma$ is also non-negative  a.e. with respect to $\sigma$. On the other hand, by the definition of $d$ and $h$
and the monotonicity of $\mathcal{F}_k$, all four terms on the right-hand-side are non-positive. It follows that
$h\equiv 0$ and thus 
\begin{equation*}%\label{lipest1}
\Vert \tilde \varphi - \tilde \psi \Vert_{L^\infty} \leq d \leq \Vert \omega -  \kappa \Vert_{\mathbf L^\infty}, 
\end{equation*} 
which proves ii).\\
iii) is a direct consequence of re-investing ii) into \eqref{PoissLip}, where 
						\begin{align*}
& \Vert \tilde \varphi - \tilde \psi \Vert_{W^{1,q}_D}\\
& \leq \Vert P^{-1} \Vert_{\mathcal{L}(L^\infty;W^{1,q}_D)} \Vert \mathcal{F}_1 (\omega_1 - \tilde \varphi) - 
\mathcal{F}_2(\omega_2 + \tilde \varphi) - \mathcal{F}_1(\kappa_1 - \tilde \psi) + \mathcal{F}_2(\kappa_2 + \tilde \psi) \Vert_{\infty} \\
& \leq C_M (\Vert \omega - \kappa \Vert_{\mathbf L^\infty} + \Vert \tilde \varphi - \tilde  \psi \Vert_\infty)\\
& \leq 2 C_M \Vert \omega - \kappa \Vert_{\mathbf L^\infty},
							\end{align*}
							where the constant $C_M > 0$ depends on the local 
Lipschitz constants of $\mathcal{F}_k$ with respect to bounded sets of parameters $\Vert \omega \Vert_{\mathbf L^\infty},
 \Vert \kappa \Vert_{\mathbf L^\infty} < M$. 
	\end{proof}

\begin{rem}\label{r-groeger}
We refer to \cite{groeger87} for a similar analysis of \eqref{nlPoiss}.
	\end{rem}
Theorem \ref{t-potentialto} is crucial for our result on well-posedness, but it also provides an adequate starting
 point for an highly effective numerical solution of the nonlinear Poisson equation. We discuss this point in some
 detail: \\
Given any $k_1 \in \mathbb{R}$, e.g.\ $k_1 = 0$, with the choice of $k_2 = \mathcal{F}_2^{-1} (\mathcal{F}_1 (k_1))$,
 the pair $k = (k_1, k_2)$ is such that $\mathcal{S}(k) = 0$. Set $K_0 = \max(\vert k_1 \vert, \vert k_2 \vert)$ 
and note that $K_0=0$ is admissible if $\mathcal{F}_1 = \mathcal{F}_2$, cf.\ the examples in Subsection \ref{relations}. 
 Then by Theorem \ref{t-potentialto} ii), for all $\omega = (\omega_1, \omega_2)$ with $\Vert \omega \Vert_\infty \leq M$,
 the set of solutions $\tilde{\varphi} = \mathcal{S}(\omega)$ is bounded via 
\begin{equation*}
	\Vert \mathcal{S}(\omega) \Vert_{L^\infty}=	\Vert \mathcal{S}(\omega)- \mathcal{S}(k)\Vert_{L^\infty}
 \leq \Vert \omega - k \Vert_{\mathbf L^\infty} \leq M + K_0. 
	\end{equation*}
We use this information in the following way:
Let $K = M + K_0$, consider the function $\varpi$ with
\[
\varpi(s)= \begin{cases} K, \quad \text{if} \quad s \ge K \\
s \quad \text{if} \quad s \in [-K,K] \\
-K \quad \text{if} \quad s \le - K,
\end{cases}
\]	
and denote the induced Nemytskii operator also by $\varpi$. Then $\tilde \varphi$ is a solution of \eqref{nlPoiss} 
if and only if it satisfies the 
equation
\begin{equation} \label{e-nonliPioss}
P\tilde \varphi   -\mathcal{F}_1(\omega_1 - \varpi(\tilde{\varphi}) )
	     +     \mathcal{F}_2(\omega_2 +\varpi(\tilde{\varphi}))=0.
\end{equation}
With this cut-off in the equation, it is straightforward to check that the associated operator
\[
\mathcal P_\omega:W^{1,2}_D \ni \tilde \psi \mapsto P\tilde \psi   -\mathcal{F}_1(\omega_1 - \varpi(\tilde{\psi}) )
	     +     \mathcal{F}_2(\omega_2 +\varpi(\tilde{\psi})) \in W^{-1,2}_D
\]
is well-defined, Lipschitzian and strongly monotonous with a monotonicity constant not smaller than the one for $P:W^{1,2}_D \to W^{-1,2}_D$ . The combination of monotonicity and Lipschitz continuity in a Hilbert space setting then provides a standard, highly efficient solution algorithm for \eqref{e-nonliPioss}, based on a contraction principle, see in particular \cite[Ch. III.3.2]{ggz}.\\
% Therefore, one can show also in the $W^{1,2}_D \leftrightarrow W^{-1,2}_D$ context, that 
%\eqref{e-nonliPioss} has exactly one solution (cf.  \cite[Ch. III.3]{ggz}) -- which, of course, is identical with 
%$\mathcal S(\omega)$. But much more is true:
% the bounded Lipschitz continuity, combined
%with the strong monotonicity enables one to exploit a standard solution algorithm for such equations -- basing on
%a \emph{contraction priniciple} and thus being highly efficient, see in particular \cite[Ch. III.3.2]{ggz} and Remark 3.5 there.\\
Finally, a last point is interesting: due to the cut-off, these considerations do not depend on the asymptotics of
the distribution functions $\mathcal{F}_k$
%, if the argument runs towards 
at $\infty$.
\subsection[Quasilinear evolution of quasi Fermi levels ]{Quasilinear evolution of quasi-Fermi levels}
\label{sec:der}

In this subsection, we derive a quasilinear abstract Cauchy problem of type \eqref{ACP} %for the evolution of the quasi-Fermi levels $\Phi_k$ 
that models the van-Roosbroeck system \eqref{vanRoos}. It is the basis of our analysis and of a functional analytic setting in which both gradient recombination and interfacial jump conditions can be realized, cf.\ the discussion at the beginning of this section. In particular, the smoothing through the Poisson equation \eqref{Poisson-eq} for the electrostatic potential can be fully exploited in this setting. We first give a pointwise reformulation of the bulk equations in \eqref{vanRoos} in terms of the evolution of the quasi Fermi levels $\Phi_k$ in \eqref{formalBulk} and then derive a suitable weak formulation in the space $\mathbf{X}$. 
%
%\newpage
With the definition \eqref{e-relation} of the quasi Fermi levels we have 
	\begin{equation}\label{phi-evo-1}
		\dot{\Phi}_k = \frac{1}{\mathcal{F}_k'(\chempot_k)} \dot{u}_k - (-1)^k \dot{\varphi}.
		\end{equation}
When recalling the split $\varphi = \tilde{\varphi} + \varphi_d$ from \eqref{splitPot}, and differentiating \eqref{tildebyu}
(formally) with respect to time, we get
 \begin{equation}\label{e-diffpoiss}
 	\dot{\varphi} = \dot{\varphi_d} + P^{-1}(\dot{u}_1 - \dot{u}_2).
 	\end{equation}
%which can be seen as a form of conservation law for the total current, compare \cite{gajewski93}.\\
According to the defintion of the current densities \eqref{eq:curr-dens}, we get 
		\begin{equation}\label{flux-1}
		\frac{1}{\mathcal{F}_k'(\chempot_k)} \mathrm{div}\,j_k = \mathrm{div}( \frac{\mathcal{F}_k}{\mathcal{F}_k'}
(\chempot_k) \mu_k \nabla \Phi_k ) - \nabla (\frac{\mathcal{F}_k}{\mathcal{F}_k'}(\chempot_k) - \chempot_k  ) \cdot \mu_k \nabla \Phi_k.		
			\end{equation}
Combining \eqref{phi-evo-1}, \eqref{e-diffpoiss} and \eqref{flux-1} with the bulk equations in \eqref{CuCo-eq}, we obtain the equations 
\begin{equation}\label{formalBulk}
	\begin{array}{rcl}
	\left(
	\begin{array}{c}
	\dot{\Phi}_1\\
	\dot{\Phi}_2
	\end{array}
	\right)  
	\!\!\!& - &\!\!\!
	\left(
	\begin{array}{cc}
	1 +P^{-1} \mathcal{F}_1'(\chempot_1 )  & 
	-P^{-1} \mathcal{F}_2'(\chempot_2 )\\
	-P^{-1} \mathcal{F}_1'(\chempot_1) & 
	1 + P^{-1} \mathcal{F}_2'(\chempot_2 )
	\end{array}
	\right)
	\left(
	\begin{array}{c}
	\mathrm{div}( \frac{\mathcal{F}_1}{\mathcal{F}_1'}(\chempot_1) \mu_1 \nabla \Phi_1 )\\
	\mathrm{div}( \frac{\mathcal{F}_2}{\mathcal{F}_2'}(\chempot_2) \mu_2 \nabla \Phi_2 )
	\end{array}
	\right) 
	\\
	\!\!\!& = &\!\!\! 
-  \left(
	\begin{array}{cc}
	1 +P^{-1} \mathcal{F}_1'(\chempot_1 )  & 
	-P^{-1} \mathcal{F}_2'(\chempot_2 )\\
	-P^{-1} \mathcal{F}_1'(\chempot_1) & 
	1 + P^{-1} \mathcal{F}_2'(\chempot_2 )
	\end{array}
	\right)	
	\left(
	\begin{array}{c}
	\nabla (\frac{\mathcal{F}_1}{\mathcal{F}_1'}(\chempot_1) - \chempot_1  ) \cdot \mu_1 \nabla \Phi_1\\
	\nabla (\frac{\mathcal{F}_2}{\mathcal{F}_2'}(\chempot_2) - \chempot_2  ) \cdot \mu_2 \nabla \Phi_2		
	\end{array}
	\right) \\
\!\!\!& &\!\!\! + 	\left(
	\begin{array}{c}
	\frac{1}{\mathcal{F}_1'(\chempot_1)}   \\
	\frac{1}{\mathcal{F}_2'(\chempot_2)} 	
	\end{array}
	\right) r^\Omega
+ 	\left(
	\begin{array}{c}
	+ 1  \\
	-1 
	\end{array}
	\right) \dot{\varphi}_d. 
	\end{array}
	\end{equation} 
in $J\times \Omega$. \\
To incorporate the boundary and interface conditions in \eqref{CuCo-eq}, we use the split 
\begin{equation*}
	\Phi_k = \Phi_k^d + \phi_k,
	\end{equation*}
cf.\ Subsection \ref{sec:diribv}. 
We can now consider the densities $u$ in \eqref{vanRoos} as functions of $\phi$ via 
\begin{equation}\label{ubyphi}
	u_k = \mathcal{F}_k (\chempot_k),  \; with \;\chempot_k = 
\phi_k + \Phi_k^d + (-1)^k \varphi_d + (-1)^k \mathcal{S}(\phi + \Phi^d + 
\check{\varphi}^d),
	\end{equation}
	where $\mathcal{S}$ taken from \eqref{defS} is the solution operator of the nonlinear Poisson problem 
\eqref{nlPoiss} and with the notation
	\begin{equation*}
		\check {\varphi}^d = 
 \left(
		\begin{array}{c}
		+1  \\
		-1 
		\end{array}
		\right)\varphi_d .
		\end{equation*}
In the following, considering $\varphi_d$  and $\Phi^d$ as fixed, for $\phi \in \mathbf{W}^{1,q}_D$, we thus define
 \begin{equation*}
 \tilde{\mathcal{F}}_k (t,\phi) = \mathcal{F}_k (\chempot_k(t))
 \end{equation*} 
 with the right-hand-side as in \eqref{ubyphi} and, correspondingly, $\tilde{\mathcal{F}}_k' (t,\phi) = \mathcal{F}_k' (\chempot_k(t) )$, and 
 \begin{equation*}
	 \eta_k(t,\phi) = \frac{\tilde{\mathcal{F}}_k (t,\phi_k)}{\tilde{\mathcal{F}}_k' (t,\phi_k)}.
	 \end{equation*}
		 As an additional shorthand, we write 
		 \begin{equation*}
		 	\left(
		 	\begin{array}{cc}
		 	1 +P^{-1} \mathcal{F}_1'(\chempot_1 )  & 
		 	-P^{-1} \mathcal{F}_2'(\chempot_2 )\\
		 	-P^{-1} \mathcal{F}_1'(\chempot_1) & 
		 	1 + P^{-1} \mathcal{F}_2'(\chempot_2 )
		 	\end{array}
		 	\right) = \mathrm{Id} + P^{-1}[\tilde{\mathcal{F}}'(t,\phi)],
			 \end{equation*}
			 for the matrix operators in \eqref{formalBulk}.\\
We can now define the abstract evolution problem \eqref{vanRoos}, in a functional analytic setting in which Neumann boundary and interfacial recombination terms appear on the right-hand-sides,
\begin{equation}\label{abstract}
	\dot{\phi}(t) + \mathcal{A}(t,\phi(t))\phi(t) = \mathcal{R}(t,\phi(t)) \in \mathbf{X} \quad \text{ for a.a. }t \in J. 
	\end{equation}
The operators $\mathcal{A} \colon \overline{J}\times \mathbf{W}^{1,q} \to \mathcal{L}(\mathcal{D}_{\mu_k},\mathbf{X})$ and $ \mathcal{R} = \mathcal{R}_{\mathrm{flux}} + \mathcal{R}_{\mathrm{rec}} + \mathcal{R}_{\mathrm{data}}$ are given by the elliptic part
	\begin{equation}\label{defAcal}
		\mathcal{A}(t,v)\phi =  (\mathrm{Id} + P^{-1}[\tilde{\mathcal{F}}'(t,v)])
	\left(
	\begin{array}{cc}
A_{\eta_1(t,v) \mu_1}   & 
	0\\
	0 & 
	A_{\eta_2(t,v) \mu_2}
	\end{array}
	\right)
	\phi,
		\end{equation} 
and the lower-order flux term $\mathcal{R}_{\mathrm{flux}} \colon J \times \mathbf{W}^{1,q}_D \to \mathbf{L}^{q/2}$ with
\begin{equation}\label{defRflux}
	\mathcal{R}_{\mathrm{flux}} (t,v) = (\mathrm{Id} + P^{-1}[\tilde{\mathcal{F}}'(t,v)])
	\left(
	\begin{array}{c}
	\nabla (\eta_1(t,v) - v_1  ) \cdot \mu_1 \nabla v_1 \\
		\nabla (\eta_2(t,v) - v_2  ) \cdot \mu_2 \nabla v_2 
	\end{array}
	\right).
	\end{equation}
In order to define the recombination term $\mathcal{R}_{\mathrm{rec}} \colon J \times 
\mathbf{W}^{1,q}_D \to \mathbf{X}$ with 
%	 \begin{equation}\label{defRrec}
%		 \mathcal{R}_{\mathrm{rec}}(t,v) = \left(
%	\begin{array}{c}
	%\frac{1}{\tilde{\mathcal{F}}_1'(t,v)} \\
	%	\frac{1}{\tilde{\mathcal{F}}_2'(t,v)}
	%\end{array} \right) \tilde{r},
%^\Omega(t,v) + \tilde{r}^\Gamma(t,v) + \tilde{r}^\Pi(t,v),
	%	 \end{equation}
%where we set $\tilde{r} (t,\phi) = r(u,\varphi,\Phi)$ 
%for $E\in \{\Omega, \Gamma, \Pi\}$ 
%with $u$ and $\varphi$ as in \eqref{ubyphi}.\\

	 \begin{equation}\label{e-defRrec}
		 \mathcal{R}_{\mathrm{rec}}(t,v) = \left(
	\begin{array}{c}
	\frac{1}{\tilde{\mathcal{F}}_1'(t,v)} \\
		\frac{1}{\tilde{\mathcal{F}}_2'(t,v)}
	\end{array} \right) \Big( \tilde{r}^\Omega(t,v) + \tilde{r}^\Gamma(t,v) + \tilde{r}^\Pi(t,v) \Big),
		 \end{equation}
		 we set $\tilde{r}^E (t,\phi) = r^E(u,\varphi,\Phi)$ for $E\in \{\Omega, \Gamma, \Pi\}$ with $u$ and $\varphi$ as in \eqref{ubyphi}.
 We consider $\mathcal{R}_{\mathrm{rec}}(t,v)$ as an element of $\mathbf{X}$ by the embeddings in Lemma \ref{t-propX}. 
	 The part of the right-hand-side in \eqref{abstract} modeling inhomogeneous data is given by $\mathcal{R}_{\mathrm{inh}}
\colon J \times \mathbf{W}^{1,q}_D \to \mathbf{X}$ with \begin{align}\label{defRinh}
	 \mathcal{R}_{\mathrm{inh}}(t,v) & = -\dot{\Phi}^d(t) + 
	 (\mathrm{Id} + P^{-1}[\tilde{\mathcal{F}}'(t,v)])
	\left(
	\begin{array}{cc}
\widehat{A}_{\eta_1(t,v) \mu_1}   & 
	0\\
	0 & 
	\widehat{A}_{\eta_2(t,v) \mu_2}
	\end{array}
	\right)
	\Phi^d \\
	& + (\mathrm{Id} + P^{-1}[\tilde{\mathcal{F}}'(t,v)])
	\left(
	\begin{array}{c}
	\nabla (\eta_1(t,v) - v_1  ) \cdot \mu_1 \nabla \Phi^d_1 \\
		\nabla (\eta_2(t,v) - v_2  ) \cdot \mu_2 \nabla \Phi^d_2 
	\end{array}
	\right)
	+ \dot{\check{\varphi}}^d.
	 \end{align}
	 
The operators $\mathcal{A}$ and $\mathcal{R}$ are analyzed further in Subsection \ref{sec:proof} below where it is shown that they 
adapt to the functional analytic setting in $\mathbf{X}$ and that they are locally Lipschitz in $v$ uniformly with respect to time. 

\begin{rem} \label{r-Boltz}
In case of Boltzmann statistics, $\mathcal F_k = \exp$ one has $\eta_k = 1$, and the main part of the parabolic operator
 in \eqref{defAcal}
simplifies to a linear one. This shows why the analysis of van Roosbroeck's system is then much easier, compare 
\cite{gajewski:groeger89}.
\end{rem}

\section[Main Result]{Main Result}
\label{sec:main}

%
%%%%%%%%%%%%%%%%%%%%%%%%%%%%%%%%%%%%%%%%%%%%%%%%%%%%%%%%%%%%%%%%%%%%%%%%%%%%%%%%%%%%%%%%%%%%%%%%%%%%%%%%%%%%% 
In this section, we state the main result on well-posedness and regularity of solutions of the van Roosbroeck system. In the 
proof, we use the concept of maximal parabolic regularity and its application to quasilinear problems. Known preliminary 
results are stated in Subsection \ref{MaxReg}. In Subsection \ref{sec:proof}, we show that due to our preliminary considerations
 in Sections \ref{sec:rigorous} and \ref{sec:nl-reform}, the abstract theory can be applied to \eqref{abstract}. 
In Subsection \ref{r-Pi}, we discuss further implications and related topics. 
\begin{thm}\label{mr}
	Under Assumptions \ref{spatial}, \ref{assu:distri}, \ref{assu:iso}, \ref{assu:recomb} and \ref{assu:poi-diri-bv}, let $3<q<4$ as in Assumption\ \ref{assu:iso} and let $s > \frac {2q}{q-3}$. 
	\begin{itemize}
		\item {\bf Local well-posedness:}
Suppose 
\begin{equation*} %\label{initial}
\phi_0 = \Phi_0 - \Phi^d(0) \in (\mathbf X,\mathcal D_\mu)_{1-\frac {1}{s},s} = \mathbf{Y}_{s,q}.
\end{equation*}
Then there is a maximal time interval $J^*=]0,T^*[$ of existence ($0<T^*\leq T$)
and a unique solution 
\begin{equation*}
	\phi \in L^s (J^*; \mathcal{D}_\mu) \cap W^{1,s}(J^*;\mathbf{X})\cap C(\overline{J^*};\mathbf{Y}_{s,q})
\hookrightarrow C(\overline{J^*};\mathbf W^{1,q})
	\end{equation*}
	of \eqref{abstract} that depends continuously on the data and initial value in the respective norms. 
	\item The {\bf electron and hole densities} and the  {\bf chemical and electrostatic potentials} associated to the solution $\phi$
 satisfy 
	\begin{equation*}
		u_k, \chi_k, \varphi \in C(J^*;W^{1,q})\hookrightarrow C(J^*;C^\beta), \qquad u > 0,
		\end{equation*} 
			for some $\beta > 0$. 
\item {\bf Regularity in time:} If the data $d,\Phi^D, \varphi_D$ and $\varphi_\Gamma$ are such that there is a $\gamma > 0$ with $\mathcal{R}_{\mathrm{inh}}(\cdot,v) \in C^\gamma(J;\mathbf{X})$ for every $v\in \mathbf{W}^{1,q}$, then 
\begin{equation*}
	\phi \in C^\gamma(J^*; \mathcal{D}_\mu) \cap C^{1+\gamma}(J^*; \mathbf{X}).
	\end{equation*}
	\end{itemize}
%\begin{align*}
%\zeta \df \varphi-{\dbvarphi}	 & \in C([0,T^*];W^{1,q}_D) \cap 
 %C^1(]0,T^*[;W^{1,q}_D),\\
  %  \qquad
 %\capitalphi \df    \Phi - \Psi	 & \in
  %  C^{1+\beta}(]0,T^*[,\mathbf X)  
%\cap C^{\beta}(]0,T^*[,\mathcal D_{\mu} ), \quad \phi(0)=\phi_0:=\Phi_0-\Psi_0,  
%	\end{align*}
%\end{itemize}
	\end{thm}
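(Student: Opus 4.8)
The plan is to recast \eqref{abstract} as a quasilinear abstract Cauchy problem $\dot\phi + \mathcal{A}(t,\phi)\phi = \mathcal{R}(t,\phi)$ in $\mathbf{X}$ and to invoke the standard local existence theory for quasilinear equations with maximal parabolic regularity (in the spirit of Prüss, Amann and Clément--Li, and its refinements for nonsmooth divergence operators). The input required is: (a) for a reference pair $(t_0,v_0)$ the linearization $\mathcal{A}(t_0,v_0)$ has maximal parabolic $L^s$-regularity on $\mathbf{X}$ with fixed domain $\mathcal{D}_\mu$; (b) $(t,v)\mapsto\mathcal{A}(t,v)$ and $(t,v)\mapsto\mathcal{R}(t,v)$ are continuous in $t$ and locally Lipschitz in $v$, where $v$ ranges over a ball in the time-trace space $\mathbf{Y}_{s,q}=(\mathbf{X},\mathcal{D}_\mu)_{1-\frac1s,s}$; (c) the initial value lies in $\mathbf{Y}_{s,q}$. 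The hypothesis $s>\frac{2q}{q-3}$ is chosen precisely so that $1-\frac1s>\frac12+\frac{3}{2q}$, hence $\mathbf{Y}_{s,q}\hookrightarrow\mathbf{W}^{1,q}$ by Corollary \ref{c-interint}; this embedding is the linchpin, since it turns "solution regularity in the trace space" into "$W^{1,q}$-control of gradients a.e.\ in time", which is exactly what makes the gradient-dependent recombination terms and the quasilinear coefficients $\eta_k(t,v)\mu_k$ admissible.

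First I would assemble the linear and structural input. For $\eta_k$ strictly positive and uniformly continuous, $A_{\eta_k\mu_k}:W^{1,q}_D\to W^{-1,q}_D$ remains an isomorphism (Remark \ref{r-kommentar}), so by Lemma \ref{l-intreg} the operators $A_{\eta_k\mu_k}$ share the domain $\mathcal{D}_\mu$ on $\mathbf{X}$; Proposition \ref{t-zwischen} supplies maximal parabolic regularity on the interpolation space $\mathbf{X}$, and the multiplier Lemma \ref{t-multipl} together with Lemma \ref{l-maxparpert} shows that $\mathcal{A}(t,v)=(\mathrm{Id}+P^{-1}[\tilde{\mathcal{F}}'(t,v)])\,\mathrm{diag}(A_{\eta_k(t,v)\mu_k})$ retains maximal parabolic regularity with constants locally uniform in $v\in\mathbf{Y}_{s,q}$ and that $v\mapsto\mathcal{A}(t,v)$ is locally Lipschitz into $\mathcal{L}(\mathcal{D}_\mu,\mathbf{X})$, uniformly in $t$. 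Here the smoothing of the nonlinear Poisson equation enters through Theorem \ref{t-potentialto}: $\mathcal{S}$ is boundedly Lipschitz $\mathbf{L}^\infty\to W^{1,q}_D$ and $1$-Lipschitz into $L^\infty$, so the $\chempot_k$ in \eqref{ubyphi} depend Lipschitz-continuously on $\phi\in\mathbf{W}^{1,q}$, and since $\mathcal{F}_k\in C^2$ with derivatives bounded away from $0$ and $\infty$ on bounded sets (Assumption \ref{assu:distri}) and $W^{1,q}\hookrightarrow C^\beta$ with $\beta=1-\frac3q$, the Nemytskii maps $\tilde{\mathcal{F}}_k'$, $1/\tilde{\mathcal{F}}_k'$, $\eta_k$ are locally Lipschitz on $\mathbf{W}^{1,q}$.

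Next I would treat the right-hand side $\mathcal{R}=\mathcal{R}_{\mathrm{flux}}+\mathcal{R}_{\mathrm{rec}}+\mathcal{R}_{\mathrm{inh}}$ as a locally Lipschitz $\mathbf{X}$-valued map, uniformly in $t$. For $\mathcal{R}_{\mathrm{flux}}$ one uses $\nabla v_k,\nabla\eta_k(t,v)\in\mathbf{L}^q$ so the gradient products lie in $\mathbf{L}^{q/2}\hookrightarrow\mathbf{X}$ by Lemma \ref{t-propX}(i), with Lipschitz bounds from Hölder's inequality and the $C^2$-regularity of the $\mathcal{F}_k$. For $\mathcal{R}_{\mathrm{rec}}$: the bulk part $\tilde r^\Omega$ maps into $L^{q/2}\hookrightarrow X$ and is locally Lipschitz by Assumption \ref{assu:recomb} (for Avalanche generation this is precisely Lemma \ref{l-Lipsch} composed with the Lipschitz maps $(u_k,\varphi,\Phi_k)\mapsto(\nabla\varphi,j_k)$ and $\phi\mapsto(u,\varphi,\Phi)$), while $\tilde r^\Gamma,\tilde r^\Pi$, valued in $L^4(\Gamma,\sigma)$ resp.\ $L^4(\Pi,\sigma)$ by Assumption \ref{recombGamma}, are pushed into $X$ by the dual trace operators $T^*_\Gamma,T^*_\Pi$ of Lemma \ref{t-propX}(ii); the prefactors $1/\tilde{\mathcal{F}}_k'$ act as $W^{1,q}$-multipliers. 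Finally $\mathcal{R}_{\mathrm{inh}}\in L^\infty(J;\mathbf{X})$ and is $v$-Lipschitz because Assumption \ref{assu:poi-diri-bv} was designed so that $\widehat{A}_{\mu_k}\Phi^d_k,\widehat{P}\varphi^D,\dot\Phi^d,\dot{\check\varphi}^d\in L^\infty(J;L^{q/2})$ and $L^{q/2}\hookrightarrow X$. With (a)--(c) verified, the abstract quasilinear theorem yields the maximal interval $J^*=]0,T^*[$ and a unique $\phi\in L^s(J^*;\mathcal{D}_\mu)\cap W^{1,s}(J^*;\mathbf{X})\hookrightarrow C(\overline{J^*};\mathbf{Y}_{s,q})\hookrightarrow C(\overline{J^*};\mathbf{W}^{1,q})$ with continuous dependence. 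The regularity of the physical quantities then follows: $\tilde\varphi=\mathcal{S}(\cdot)\in C(J^*;W^{1,q}_D)$ by Theorem \ref{t-potentialto}(i), so $\varphi=\tilde\varphi+\varphi_d$, $\chempot_k=\Phi_k+(-1)^k\varphi$, and $u_k=\mathcal{F}_k(\chempot_k)$ are all in $C(J^*;W^{1,q})\hookrightarrow C(J^*;C^\beta)$ (composition with the $C^2$-function $\mathcal{F}_k$ preserves $W^{1,q}\cap C^0$), and $u_k>0$ since $\mathcal{F}_k>0$.

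For the time regularity I would bootstrap: the maximal-regularity solution already satisfies $\phi\in C^\alpha(\overline{J_0};\mathbf{W}^{1,q})$ on compact subintervals $J_0\Subset J^*$ for some $\alpha>0$ (trace theory for $L^s(\mathcal{D}_\mu)\cap W^{1,s}(\mathbf{X})$), hence $t\mapsto\mathcal{A}(t,\phi(t))$, $\mathcal{R}_{\mathrm{flux}}(\cdot,\phi(\cdot))$ and $\mathcal{R}_{\mathrm{rec}}(\cdot,\phi(\cdot))$ are Hölder continuous in $t$; adding the hypothesis $\mathcal{R}_{\mathrm{inh}}(\cdot,v)\in C^\gamma(J;\mathbf{X})$, the full right-hand side of \eqref{abstract} is Hölder in $t$, and the $C^\gamma$-theory for linear problems with maximal parabolic regularity (freezing the now-Hölder coefficients) upgrades $\phi$ to $C^\gamma(J^*;\mathcal{D}_\mu)\cap C^{1+\gamma}(J^*;\mathbf{X})$. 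I expect the main obstacle to be the step of verifying that $\mathcal{R}_{\mathrm{rec}}$ — in particular its Avalanche and interfacial components — is a well-defined, locally Lipschitz $\mathbf{X}$-valued map uniformly in time: this is where the nonsmoothness-imposed ceiling and the nonlinearity-imposed floor on regularity must be balanced, and it is exactly what forces the choice $\mathbf{X}=[L^q,W^{-1,q}_D]_{3/q}$ and $s>\frac{2q}{q-3}$.
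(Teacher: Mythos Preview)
Your proposal is correct and follows essentially the same route as the paper: an application of the Pr\"uss quasilinear maximal-regularity theorem (Proposition~\ref{p-pruess}), with Lemmas~\ref{t-multipl} and~\ref{l-maxparpert} supplying the constant-domain and maximal-regularity input, Theorem~\ref{t-potentialto} and Lemma~\ref{t-propX} handling the Lipschitz properties of $\mathcal{A}$ and $\mathcal{R}$, and the H\"older bootstrap via the nonautonomous linear theory of da~Prato--Sinestrari/Lunardi for the third claim. One small correction: the shared domain $\mathrm{dom}_X(A_{\eta_k\mu_k})=\mathrm{dom}_X(A_{\mu_k})$ comes from Lemma~\ref{t-multipl}(ii), not from Lemma~\ref{l-intreg}.
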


%\subsection[The solution of the quasilinear parabolic system]
%{The solution of the quasilinear parabolic system}
%{\bf etwas allgemeine Vorrede}
%In this subsection we prove Theorem \ref{mr}.
\subsection[Maximal parabolic regularity]{Maximal parabolic regularity}
\label{MaxReg}
%
%%%%%%%%%%%%%%%%%%%%%%%%%%%%%%%%%%%%%%%%%%%%%%%%%%%%%%%%%%%%%%%%%%%%%%%%%%%%%%%%%%%%%%%%%%%%%%%%%%%%%%%%%%%%%%%
			The proof of Theorem \ref{mr} rests
			on the notion of \emph{maximal parabolic regularity} for a suitable linearization of the problem, which
			we recall here:
			\begin{defn} \label{d-maxreg}
			Let $1 < s < \infty$, let $Z$ be a Banach space and let $J := \left]0, T \right[$ 
			be a bounded interval. Assume that $B$ is a closed
			operator in $Z$ with dense domain $ \mathfrak D$,  equipped with the graph norm.
			We say that $B$ satisfies \emph {maximal parabolic $L^s$-regularity in $Z$}, if for any
			$f\in L^s(J; Z)$ there exists a unique function $v \in W^{1,s}(J; Z) \cap
			L^s(J; \mathfrak D)$ satisfying $ v(0) = 0$ and
			\begin{equation}\label{mpr} \dot{v} + Bv = f   \quad \text{ holds a.e. on} \; J.
			\end{equation}
                      
					\end{defn}
			\begin{rem} \label{r-ebed}
			\renewcommand{\labelenumi}{\roman{enumi})}
			\begin{enumerate}

			\item The property of maximal parabolic regularity of
			        an operator $B$ is independent of $s \in ]1, \infty [$ and
			        the choice of a bounded interval $J$, cf. \cite[Thm. 7.1/Cor. 5.4]{dore1}.
			%Thus, in the following we will say for short that $B$ admits maximal parabolic
			%        regularity on $X$.
			
			\item Observe that (cf. \cite[Ch.~4.10]{amannbuch})
			        \begin{equation*}% \label{e-embedcont}
			          W^{1,s}(J; Z) \cap L^s(J; \mathfrak D) \hookrightarrow
			                C(\overline{J}; (Z, \mathfrak D)_{1-\frac{1}{s},s}).
			        \end{equation*}
					In particular, $(Z, \mathfrak D)_{1-\frac{1}{s},s}$, is the appropriate space of 
                        initial values for \eqref{mpr}.
			\item
			If $\theta \in ]0,1-\frac {1}{s}[$, then
			\begin{equation*} %\label{e-embedhoeld}
			 W^{1,s}(J; Z) \cap L^s(J; \mathfrak D) \hookrightarrow
			                C^\beta({J}; (Z, \mathfrak D)_{\theta,1})
			\end{equation*}
			with $\beta:=1-\frac {1}{s} -\theta$, cf. \cite[Thm. 3]{amann}.
			\item 
			If $B$ satisfies maximal parabolic regularity on a Banach space $Z$, and $B_0 $ is
			relatively bounded with a sufficiently small relative bound, then $B+B_0$ also
			satisfies maximal parabolic regularity on $Z$, cf. \cite[Prop. 1.3]{a/c/f/p} or \cite[Prop. 1.5]{pruess}. 
			\item
			If $B$ satisfies maximal parabolic regularity on the complex Banach space $Z$, then $-B$ is a generator of
			an analytic semigroup on $Z$. \cite[Ch. 4]{dore1}.
			\item
			If $B_1, B_2$ satisfy maximal parabolic regularity on $Z$, then $\begin{pmatrix} B_1 & 0 \\ 
			0 & B_2 \end{pmatrix}$ satisfies maximal parabolic regularity on $\mathbf{Z}=Z \oplus Z$. 

			\end{enumerate}
			\end{rem}
We first show that the
second order divergence operators $A_\rho$ occurring in \eqref{abstract} satisfy maximal parabolic regularity:
\begin{prop} \label{t-zwischen} 
Let $\rho$ be an elliptic coefficient function on $\Omega$, and assume % $ p \in ]1,\infty[$, 
$q \in [2,\infty[$.
\renewcommand{\labelenumi}{\roman{enumi})}
\begin{enumerate}
\item
Then the operator $\underline A_\rho$ satisfies  maximal parabolic regularity in $\underline W^{-1,q}_D$ and on $\underline L^q$.
\item
If $\theta \in ]0,1[$, then it also satisfies maximal parabolic regularity in $[\underline L^q,\underline W^{-1,q}_D]_\theta.$
\end{enumerate}
\end{prop}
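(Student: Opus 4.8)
The plan is to prove all three statements in sequence — first on $\underline L^q$, then on $\underline W^{-1,q}_D$, then on the interpolation spaces $[\underline L^q,\underline W^{-1,q}_D]_\theta$ — the last two being deduced from the first by a similarity transform built from the fractional powers of $\underline A_\rho+1$ furnished by Proposition \ref{p-ausch}. Since on the bounded interval $J$ a bounded perturbation leaves maximal parabolic regularity unchanged (Remark \ref{r-ebed}\,iv)), it suffices to argue throughout for $\underline A_\rho+1$ in place of $\underline A_\rho$. On $\underline L^q$, $1<q<\infty$, the operator $\underline A_\rho$ is (the complexification of) the generator of the sub-Markovian semigroup associated with the symmetric Dirichlet form $\psi\mapsto\int_\Omega(\rho\nabla\psi)\cdot\nabla\overline\psi$ with the given mixed boundary conditions — in particular a positive contraction semigroup on every $\underline L^p$, cf.\ \cite{Ouh05}; for such operators maximal parabolic $L^s$-regularity on $\underline L^q$ is classical, and alternatively it follows from the bounded $H^\infty$-calculus of $\underline A_\rho+1$ on $\underline L^q$ from \cite{ausch} together with the UMD property of $\underline L^q$ (giving $R$-sectoriality, hence maximal parabolic regularity). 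For $q=2$ it is immediate, $-\underline A_\rho$ generating an analytic semigroup on the Hilbert space $\underline L^2$.

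To transfer this to $\underline W^{-1,q}_D$, I would use that by Proposition \ref{p-ausch}\,i) the operator $S:=(\underline A_\rho+1)^{1/2}$ is a topological isomorphism of $\underline L^q$ onto $\underline W^{-1,q}_D$ which, commuting with $\underline A_\rho$ and its resolvent, carries $dom_{\underline L^q}(\underline A_\rho)$ onto $dom_{\underline W^{-1,q}_D}(\underline A_\rho)$ and satisfies $S^{-1}(\underline A_\rho)_{\underline W^{-1,q}_D}\,S=(\underline A_\rho)_{\underline L^q}$. Maximal parabolic regularity is manifestly invariant under such a similarity — the problem $\dot v+\underline A_\rho v=f$, $v(0)=0$, in $\underline W^{-1,q}_D$ becomes, under $w:=S^{-1}v$, $g:=S^{-1}f$, the corresponding problem in $\underline L^q$, with $v\in W^{1,s}(J;\underline W^{-1,q}_D)\cap L^s(J;dom_{\underline W^{-1,q}_D}(\underline A_\rho))$ if and only if $w\in W^{1,s}(J;\underline L^q)\cap L^s(J;dom_{\underline L^q}(\underline A_\rho))$ — which gives i). For ii), I would first check, exactly as in the proof of Lemma \ref{l-intreg}\,i) (using the bounded imaginary powers on $\underline W^{-1,q}_D$ from Proposition \ref{p-ausch}\,iii) and \cite[Ch. 1.15.3]{triebel}), that
\begin{equation*}
dom_{\underline W^{-1,q}_D}\bigl((\underline A_\rho+1)^{\frac{1-\theta}{2}}\bigr)=[\underline W^{-1,q}_D,\underline L^q]_{1-\theta}=[\underline L^q,\underline W^{-1,q}_D]_\theta
\end{equation*}
with equivalent norms; composing with $(\underline A_\rho+1)^{-1/2}:\underline W^{-1,q}_D\to\underline L^q$ then shows that $S_\theta:=(\underline A_\rho+1)^{\theta/2}$ is a topological isomorphism of $\underline L^q$ onto $[\underline L^q,\underline W^{-1,q}_D]_\theta$ commuting with $\underline A_\rho$, and running the similarity argument with $S_\theta$ in place of $S$ proves ii).

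The only substantive step is the $\underline L^q$-case for $q>2$: there maximal parabolic regularity is not accessible by Hilbert-space or pure form methods and genuinely relies on the harmonic analysis of non-smooth divergence-form operators (sub-Markovianity, respectively the $H^\infty$-calculus of \cite{ausch}). The two transfer steps are soft once Proposition \ref{p-ausch} is available; the point requiring attention there is that $S$ and $S_\theta$ indeed map $dom(\underline A_\rho)$ in the source space onto $dom(\underline A_\rho)$ in the target space, which follows since $\underline A_\rho$ commutes with its own functional calculus.
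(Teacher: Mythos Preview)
Your proof is correct, but your route differs from the paper's in an interesting way. For $\underline L^q$ the paper invokes upper Gaussian kernel estimates \cite{elst} together with the Hieber--Pr\"uss theorem \cite{HiebPruess}, whereas you rely on the sub-Markovian structure (Lamberton-type arguments) or alternatively on the $H^\infty$-calculus; both are legitimate, though your second alternative would need $H^\infty$-calculus on $\underline L^q$ rather than on $\underline W^{-1,q}_D$, which is not explicitly stated in Proposition~\ref{p-ausch}. For $\underline W^{-1,q}_D$ and for the interpolation spaces the paper proceeds quite differently: it cites the direct result \cite[Ch.~11]{ausch} for the dual Sobolev case, and for ii) it appeals to a general abstract lemma \cite[Lemma~5.3]{hal/reh} asserting that maximal parabolic regularity on two nested spaces propagates to their complex interpolation spaces. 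Your approach instead exploits the specific identification of these spaces as fractional power domains (via Proposition~\ref{p-ausch}) and reduces everything to the $\underline L^q$ case by the single device of a similarity transform $S_\theta=(\underline A_\rho+1)^{\theta/2}$. This is more self-contained and avoids the non-trivial interpolation lemma, at the price of needing the precise square-root/fractional-power description of the scale; the paper's modular argument is more robust in that it would apply even without such an identification.
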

\begin{proof}
Maximal parabolic parabolic regularity in $\underline L^q$ is obtained under our supposed geometric conditions, if one uses the  upper Gaussian estimates for the semigroup kernel from  \cite{elst} and then applies \cite{HiebPruess}, compare also \cite{coulhon}. For the case $\underline W^{-1,q}_D$, see \cite[Ch. 11]{ausch}.
%\cite[Thm. 5.9]{hal/rehBedl}.\\
ii) follows from i) and the following fact, proved in \cite[Lemma 5.3]{hal/reh}: if the (complex) Banach space $Z_1$ 
embeds into the (complex) Banach space $Z_2$ and the operators $A:dom_{Z_2}(A) \to Z_2$ and $A|_{Z_1}$ satisfy maximal parabolic
regularity on $Z_1$ and $Z_2$, respectively, then $A$ also satisfies maximal parabolic regularity on every
complex interpolation space $[Z_1,Z_2]_{\theta}$. Compare also \cite[Thm. 5.19]{hal/rehBedl}.
\end{proof}
\begin{cor} \label{t-zwischenreell} 
Let $\rho$ be an elliptic coefficient function on $\Omega$, and assume  %$ p \in ]1,\infty[$, 
$q \in [2,\infty[$.
If $\theta \in ]0,1[$, then $A_\rho$ also satisfies maximal parabolic regularity in $[L^q,W^{-1,q}_D]_\theta=:Z.$
\end{cor}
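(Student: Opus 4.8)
The plan is to deduce the real statement directly from its complex counterpart, Proposition~\ref{t-zwischen}~ii), by the same complexification/conjugation device that was used in the proof of Lemma~\ref{l-isocomplex}. Write $\underline Z := [\underline L^q,\underline W^{-1,q}_D]_\theta$. By Remark~\ref{r-complexinterp}, the space $Z = [L^q,W^{-1,q}_D]_\theta$ is precisely the real part of $\underline Z$, and, since $\underline A_\rho$ is the complexification of $A_\rho$, the domain $dom_Z(A_\rho)$ is the real part of $dom_{\underline Z}(\underline A_\rho)$, with equivalent graph norms. First I would extend the $*$-operation of Lemma~\ref{l-isocomplex} (complex conjugation on $\underline L^q$, its dual version on $\underline W^{-1,q}_D$): it is a bounded, conjugate-linear involution on both endpoint spaces, so by interpolation it extends to $\underline Z$, and it restricts to $dom_{\underline Z}(\underline A_\rho)$ because, $\rho$ being real, $\underline A_\rho$ commutes with $*$ (equivalently, $\underline A_\rho^{-1}$ maps $*$-fixed elements to $*$-fixed elements). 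The $*$-fixed points are exactly the real elements.

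Next, fix $1<s<\infty$ and a real $f \in L^s(J;Z)$, regarded as an element of $L^s(J;\underline Z)$ with $f(t)^* = f(t)$ for a.a.\ $t$. By Proposition~\ref{t-zwischen}~ii), $\underline A_\rho$ satisfies maximal parabolic $L^s$-regularity on $\underline Z$, so there is a unique $\underline v \in W^{1,s}(J;\underline Z) \cap L^s(J;dom_{\underline Z}(\underline A_\rho))$ with $\underline v(0)=0$ and $\dot{\underline v} + \underline A_\rho \underline v = f$ a.e.\ on $J$. Applying $*$ pointwise in time and using that it is a bounded involution commuting with $\underline A_\rho$ and with $\partial_t$, the function $t \mapsto \underline v(t)^*$ lies in the same regularity class, vanishes at $0$, and solves $\dot w + \underline A_\rho w = f^* = f$. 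By uniqueness, $\underline v(t)^* = \underline v(t)$ for a.a.\ $t$, hence $\underline v$ is real-valued; thus $\underline v = v$ with $v \in W^{1,s}(J;Z) \cap L^s(J;dom_Z(A_\rho))$, $v(0)=0$, and $\dot v + A_\rho v = f$ a.e.\ on $J$. Uniqueness of such a real $v$ is inherited from the uniqueness of $\underline v$. This verifies Definition~\ref{d-maxreg} for $A_\rho$ on $Z$.

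I expect the only delicate point to be the bookkeeping of these identifications --- that $Z$, $dom_Z(A_\rho)$ and the associated Bochner spaces over $J$ sit inside their complexified counterparts as exactly the $*$-fixed parts, with matching (equivalent) norms, and that the $*$-operation genuinely commutes with $\underline A_\rho$ on the interpolation space rather than just on the endpoints. None of this requires a new idea beyond what is already done in Lemma~\ref{l-isocomplex}, so the corollary really is a short consequence of Proposition~\ref{t-zwischen}.
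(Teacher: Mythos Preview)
Your argument is correct and follows the same overall strategy as the paper: embed the real problem into its complexification, invoke Proposition~\ref{t-zwischen}~ii) to solve there, and then verify that the resulting complex solution is in fact real. The only difference is tactical---you obtain reality via the conjugation symmetry and uniqueness of the complex solution, whereas the paper writes out the variation-of-constants formula and observes that the semigroup $e^{-t\underline A_\rho}$ (via the resolvent) maps real elements to real elements; both routes are short and rest on the same fact that $\underline A_\rho$ commutes with conjugation.
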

\begin{proof}
We assume %$p$ and 
$q$ as fixed and define $Z:=[L^q,W^{-1,q}_D]_\theta$,  $\underline Z:=[\underline L^q,\underline W^{-1,q}_D]_\theta$.
Let $f \in L^s(J;Z)$. 
We identify an element $\mathrm z \in Z$ with an element $\underline {\mathrm z}  \in \underline Z$ by setting,
\begin{equation*} %\label{e-komplexifi}
\langle \underline  {\mathrm z}|\psi \rangle_{\underline Z}:= \langle \mathrm z|\psi_1 \rangle_Z - i \langle \mathrm z| \psi_2 \rangle_Z , 
\qquad \psi = \psi_1 + i \psi_2 \in \underline Z^*=[\underline L^{q'},\underline W^{1,q'}_D]_\theta.
\end{equation*}
Identifying $f$ in this spirit with a function $g \in  L^s(J;\underline Z)$, we are looking for a solution $v$
of the equation
\begin{equation} \label{e-paar}
\dot{v} +\underline A_\rho v =g, \quad v(0)=0,
\end{equation}
According to the maximal parabolic regularity of $\underline A_\rho$ 
on $\underline Z$, the (unique) solution of \eqref{e-paar} exists and belongs to the space 
$L^s(J;dom_{\underline Z}(\underline A_\rho) \cap W^{1,s}(J;\underline Z)$. But, according to \cite[Ch. III1.3
Prop. 1.3.1]{amannbuch}, the solution of \eqref{e-paar} is given by the variation of constants formula
\[
v(t) = \int_0^t e^{-(t-s)\underline A_\rho}g(s) \, ds.
\]
Here one observes that the semigroup operators $e^{-(t-s)\underline A_\rho}$ transform elements of $Z$ into real elements
of $dom_{\underline Z}(\underline A_\rho)$ since the resolvent also has this behaviour. Thus, $ v \in L^s(J;dom_{Z}(A_\rho))$.
But $\underline A_\rho$ acts on $dom_{Z}(A_\rho)$ as $A_\rho$; so the equation \eqref{e-paar} shows that 
$\dot{v} \in L^s(J;Z)$, proving the assertion.
\end{proof}
%
%
%
%
%%%%%%%%%%%%%%%%%%%%%%%%%%%%%%%%%%%%%%%%%%%%%%%%%%%%%%%%%%%%%%%%%%%%%%%%%%%%%%%%%%%%%%%%%%%%%%%%%%%%%%%%%%%
%
			The proof of Theorem \ref{mr} rests on the maximal parabolic regularity of the 
linearization of \eqref{abstract} and a Banach fixed point argument, which is encoded 
in the following Proposition. 
			
\begin{prop}[\cite{pruess}] 
\label{p-pruess} 
			Suppose that $B$ is a closed operator on a Banach space $Z$ with dense
			domain $\mathfrak D$, which satisfies maximal parabolic regularity on $Z$.
			Suppose further $v_0 \in (Z, \mathfrak D)_{1-\frac {1}{s},s}$ and
			$\mathcal{B}: \bar J \times (Z, \mathfrak D)_{1-\frac {1}{s},s} \to \mathcal{L}
			({\mathfrak D}, Z)$ to be continuous with $B = \mathcal{B} (0,v_0)$. Let,
			in addition, $\mathcal R : J \times (Z, \mathfrak D)_{1-\frac {1}{s},s} \to Z$ be a
			Carath\'eodory map and assume the following Lipschitz conditions on
			$\mathcal B$ and $\mathcal R$:
			\begin{itemize}
			\item[$({\bf LA})$] For every $M > 0$ there exists a constant $C_M > 0$, such
			        that for all $t \in J$
			        \[%\label{e-lipa}
			          \| \mathcal B(t,w) - \mathcal B(t, \tilde{w}) \|_{\mathcal
			                L( \mathfrak D,Z)} \le C_M \; \| w - \tilde {w}\|_{(Z, \mathfrak D)_{1-\frac {1}{s},s}} ,
			\]
			if $\|w\|_{(Z, \mathfrak D)_{1-\frac {1}{s},s}},\; \| \tilde{w}
			                \|_{(Z, \mathfrak D)_{1-\frac {1}{s},s}} \le M$.
			\item[$({\bf LB})$] $\mathcal R(\cdot, 0) \in L^s(J;Z)$, and for each $M>0$ there is a
			        function $h_M \in L^s(J)$, such that
			        \[%\label{e-lipf}
			          \| \mathcal R(t,w) - \mathcal R(t,\tilde{w})\|_Z \le h_M(t) \; \| w
					- \tilde{w} \|_{(Z, \mathfrak D)_{1-\frac {1}{s},s}}
			        \]
			        holds for a.a. $t \in J$, if
			        $\|w\|_{(Z, \mathfrak D)_{1-\frac {1}{s},s}},
			        \|\tilde{w}\|_{(Z, \mathfrak D)_{1-\frac {1}{s},s}} \leq M$.
			\end{itemize}
			Then there exists $T^* \in J \cup \{T\}$, such that the equation
			\begin{equation*} 
			  \left\{ \begin{aligned}
			        \dot{v}(t) + \mathcal B \bigl(t, v(t) \bigr) v(t) &= \mathcal R(t,v(t)),\quad
					\text{a.e. } t \in J, \\
			        v(0) &= v_0.
			        \end{aligned} \right.
			\end{equation*}
			 admits a unique solution $v$ satisfying
			\begin{equation*}
			 v \in W^{1,s}(0,T^*;Z) \cap L^s(0,T^*;\mathfrak  D).
			\end{equation*}
The solution depends continuously on the initial condition in $(Z, \mathfrak D)_{1-\frac {1}{s},s}$ and the maximal time of existence $T^*$ is characterized by either $T^*=T$ or 
		\begin{equation*}
		\Vert v(t) \Vert_{(Z,\mathfrak D)_{1-\frac {1}{s},s}} \to +\infty \quad \text{as } t \to T^*. 	
			\end{equation*}
			\end{prop}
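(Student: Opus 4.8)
The plan is to prove Proposition~\ref{p-pruess} by the standard Banach fixed point argument for quasilinear parabolic problems, carried out in the maximal-regularity space. Write $Z_\gamma:=(Z,\mathfrak D)_{1-\frac{1}{s},s}$ and, for $0<a\le T$, $\mathbb E_a:=W^{1,s}(0,a;Z)\cap L^s(0,a;\mathfrak D)$; recall $\mathbb E_a\hookrightarrow C([0,a];Z_\gamma)$ by Remark~\ref{r-ebed}\,ii), with an embedding constant that can be taken independent of $a\le T$ on the subspace of functions vanishing at $t=0$. Since $B$ has maximal parabolic regularity, the solution operator $f\mapsto w$ of $\dot w+Bw=f$, $w(0)=0$, is bounded from $L^s(0,a;Z)$ to $\mathbb E_a$ with a constant $C_0$ independent of $a\le T$. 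Let $v_*\in\mathbb E_T$ solve $\dot v_*+Bv_*=0$, $v_*(0)=v_0$; then $v_*\in C([0,T];Z_\gamma)$ with $v_*(0)=v_0$, so $\|v_*\|_{L^s(0,a;\mathfrak D)}\to0$ and $\sup_{t\le a}\|v_*(t)-v_0\|_{Z_\gamma}\to0$ as $a\to0$. The key reformulation is to split $\mathcal B(t,v(t))v(t)=Bv(t)+(\mathcal B(t,v(t))-B)v(t)$, so that a $v\in\mathbb E_a$ with $v(0)=v_0$ solves the Cauchy problem iff $\dot v+Bv=G(v)$, where $G(v)(t):=(B-\mathcal B(t,v(t)))v(t)+\mathcal R(t,v(t))\in Z$ for a.a.\ $t$; that $G(v)\in L^s(0,a;Z)$ follows from the Carath\'eodory property and $(\mathbf{LA})$, $(\mathbf{LB})$, since $v(t)\in\mathfrak D$ a.e.\ and $B-\mathcal B(t,v(t))\in\mathcal L(\mathfrak D,Z)$.

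Next I would run the fixed point argument on a small ball around $v_*$. Fix $M:=\sup_{t\le T}\|v_*(t)\|_{Z_\gamma}+1$ and let $C_M,h_M$ be the data from $(\mathbf{LA})$, $(\mathbf{LB})$ at level $M$. Using continuity of $\mathcal B$ at $(0,v_0)$, choose $\delta>0$ with $\|\mathcal B(t,w)-B\|_{\mathcal L(\mathfrak D,Z)}\le\frac{1}{4C_0}$ whenever $t\le\delta$ and $\|w-v_0\|_{Z_\gamma}\le\delta$. Then choose a small $R>0$, and then a small $a\le\delta$, such that every $v$ in $\Sigma:=\{v\in\mathbb E_a:v(0)=v_0,\ \|v-v_*\|_{\mathbb E_a}\le R\}$ satisfies $\|v(t)\|_{Z_\gamma}\le M$ and $\|v(t)-v_0\|_{Z_\gamma}<\delta$, and such that $C_0C_MR$, $\|v_*\|_{L^s(0,a;\mathfrak D)}$, $\|\mathcal R(\cdot,0)\|_{L^s(0,a;Z)}$ and $\|h_M\|_{L^s(0,a)}$ are all as small as the estimates below require. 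On the complete metric space $\Sigma$ define $\Psi(v)$ to be the solution of $\dot z+Bz=G(v)$, $z(0)=v_0$. For self-mapping, $\Psi(v)-v_*$ solves the linear equation with right-hand side $G(v)$ and zero initial value, so $\|\Psi(v)-v_*\|_{\mathbb E_a}\le C_0\|G(v)\|_{L^s(0,a;Z)}$; using $\|(B-\mathcal B(t,v(t)))v(t)\|_Z\le\frac{1}{4C_0}\|v(t)\|_{\mathfrak D}$ and $\|\mathcal R(t,v(t))\|_Z\le\|\mathcal R(t,0)\|_Z+M\,h_M(t)$ one gets $\Psi(\Sigma)\subset\Sigma$. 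For contraction, $\Psi(v)-\Psi(\tilde v)$ solves the linear equation with zero initial value and right-hand side $G(v)-G(\tilde v)=(B-\mathcal B(t,v))(v-\tilde v)+(\mathcal B(t,\tilde v)-\mathcal B(t,v))\tilde v+(\mathcal R(t,v)-\mathcal R(t,\tilde v))$; up to the embedding constant of $\mathbb E_a\hookrightarrow C([0,a];Z_\gamma)$ the three summands are bounded in $L^s(0,a;Z)$ by $\frac{1}{4C_0}\|v-\tilde v\|_{\mathbb E_a}$, by $C_M\bigl(R+\|v_*\|_{L^s(0,a;\mathfrak D)}\bigr)\|v-\tilde v\|_{\mathbb E_a}$ (using $(\mathbf{LA})$ and $\|\tilde v\|_{L^s(0,a;\mathfrak D)}\le R+\|v_*\|_{L^s(0,a;\mathfrak D)}$), and by $\|h_M\|_{L^s(0,a)}\|v-\tilde v\|_{\mathbb E_a}$ (using $(\mathbf{LB})$), so $\Psi$ is a strict contraction for the chosen parameters. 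Its unique fixed point is a solution on $[0,a]$; a contraction/Gronwall argument on small intervals gives uniqueness among \emph{all} solutions, which then patch into one solution on the union of existence intervals.

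For the maximal interval, let $T^*$ be the supremum of the $a$ for which a solution on $[0,a]$ exists; it is positive by the above, and by uniqueness these fit together to a single solution $v$ on $[0,T^*[$. If $T^*<T$ and $\limsup_{t\to T^*}\|v(t)\|_{Z_\gamma}<\infty$, one applies the local result afresh at a time $t_0<T^*$ close to $T^*$ with initial value $v(t_0)$: since a bound on $\|v(t_0)\|_{Z_\gamma}$ gives a lower bound on the guaranteed interval of existence, the solution extends strictly past $T^*$, a contradiction; hence either $T^*=T$ or $\|v(t)\|_{Z_\gamma}\to\infty$ as $t\to T^*$. (The restart uses maximal parabolic regularity of $\mathcal B(t_0,v(t_0))$ as principal part, an input beyond the stated hypothesis on $\mathcal B(0,v_0)$; in the application to \eqref{abstract} it holds for all admissible coefficient operators by Corollary~\ref{t-zwischenreell}.) Continuous dependence on $v_0$ follows because the fixed point depends Lipschitz-continuously on $v_0$ through $v_*$, combined with a continuation estimate on compact subintervals of $[0,T^*[$.

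I expect the main obstacle to be the contraction estimate for the principal-part difference $(\mathcal B(t,\tilde v(t))-\mathcal B(t,v(t)))\tilde v(t)$: $(\mathbf{LA})$ only controls it by the $Z_\gamma$-distance of $v$ and $\tilde v$ times the \emph{full} $\mathfrak D$-norm of $\tilde v$, which is merely $L^s$ in time, so the needed smallness must be extracted simultaneously from the shortness of the interval (making $\|v_*\|_{L^s(0,a;\mathfrak D)}$ small) and from a radius $R$ chosen small relative to $C_M$; these are in turn coupled to the absorption of $B-\mathcal B(\cdot,v)$ via the relative bound $\frac{1}{4C_0}$, so the order in which $M,\delta,R,a$ are fixed has to be respected throughout. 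Everything else is routine once the maximal-regularity estimate is in hand.
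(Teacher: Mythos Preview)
The paper does not give its own proof of this proposition: it is quoted verbatim from \cite{pruess} and used as a black box in the proof of Theorem~\ref{mr}. So there is no ``paper's proof'' to compare against beyond the reference itself.

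That said, your argument is the standard one that underlies \cite{pruess} (and the closely related Cl\'ement--Li and Amann formulations): reduce to a fixed-point problem for $\Psi$ in a ball of $\mathbb E_a$ around the reference solution $v_*$, absorb the principal-part perturbation $B-\mathcal B(t,v)$ by continuity at $(0,v_0)$, and use $(\mathbf{LA})$, $(\mathbf{LB})$ together with smallness of $a$ and $R$ for the remaining terms. The estimates you indicate are the right ones, and your identification of the delicate term $(\mathcal B(t,\tilde v)-\mathcal B(t,v))\tilde v$ is exactly where the coupling of $R$, $a$ and $C_M$ matters.

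You have also spotted a genuine issue with the statement as written: the blow-up alternative requires restarting the argument at $(t_0,v(t_0))$, which needs maximal parabolic regularity of $\mathcal B(t_0,v(t_0))$, not just of $B=\mathcal B(0,v_0)$. In \cite{pruess} this is handled either by assuming it for all coefficient operators or by a perturbation argument (small $\|\mathcal B(t,w)-B\|_{\mathcal L(\mathfrak D,Z)}$ preserves MPR). As you note, in the application to \eqref{abstract} this holds for all admissible coefficients by Lemma~\ref{l-maxparpert}, so the gap is harmless there.
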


%%%%%%%%%%%%%%%%%%%%%%%%%%%%%%%%%%%%%%%%%%%%%%%%%%%%%%%%%%%%%%%%%%%%%%%%%%%%%%%%%%%%%%%%%%%%%%%%%%%%%%%%%%%%%
%
\subsection{Proof of  Theorem \ref{mr}}\label{sec:proof}
%
%%%%%%%%%%%%%%%%%%%%%%%%%%%%%%%%%%%%%%%%%%%%%%%%%%%%%%%%%%%%%%%%%%%%%%%%%%%%%%%%%%%%%%%%%%%%%%%%%%%%%%%%%%%%%%%%%%
 As a next step, we prove the first part of Theorem \ref{mr}.
%It includes the mathematical rigorous formulation of \eqref{e-firstt0} -- \eqref{e-diagstrom01} thereby 
%expressing $u_1,u_2, \varphi$ via ...%\eqref{e-chempto} \eqref{e-splittoff}, \eqref{e-zeta}.
The proof is an application of Proposition \ref{p-pruess}. Some preliminary observations:
\begin{lem} \label{t-multipl}
	Recall $X=[L^q,H^{-1,q}_D]_{\frac {3}{q}}$. Assume that $\rho$ is an elliptic coefficient function, such that 
\[
A_\rho:W^{1,q}_D \to W^{-1,q}_D
\]
is a topological isomorphism. 
\renewcommand{\labelenumi}{\roman{enumi})}
			\begin{enumerate}
\item
Then the (linear) mapping
\begin{equation*}% \label{e-cont1}
W^{1,q} \ni \eta \mapsto A_{\eta \rho} \in \mathcal L(dom_{X}(A_\rho);X)
\end{equation*}
is well-defined and continuous with norm $c \|\eta\|_{W^{1,q}_D}$, where the constant $c$
depends only on $\Omega$, $D$ and $\rho$. In particular, $dom_X(A_\rho) \subseteq  dom_X(A_{\eta \rho})$.
\item
Assume that the function $\eta \in W^{1,q}$ admits a strictly positive lower bound.
Then $dom_X(A_{\eta \rho})= dom_X(A_{\rho})$ and the corresponding graph norms are
equivalent.
\end{enumerate}
\end{lem}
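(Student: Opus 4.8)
The plan is to analyze the multiplication operator $\eta \mapsto A_{\eta\rho}$ by comparing it to the fixed operator $A_\rho$ at the level of the elliptic form and then transferring to the interpolation space $X$ via the fractional-power description from Lemma \ref{l-intreg}. First I would verify that for $\eta \in W^{1,q}$ and $\psi \in W^{1,q}_D$, the product $\eta\rho\nabla\psi$ lies in $L^q(\Omega;\R^3)$: this is the key pointwise computation, using that $W^{1,q}(\Omega) \hookrightarrow C^\beta(\overline\Omega) \hookrightarrow L^\infty$ for $q > 3$, so $A_{\eta\rho}\psi = -\mathrm{div}(\eta\rho\nabla\psi) \in W^{-1,q}_D$ with $\Vert A_{\eta\rho}\psi\Vert_{W^{-1,q}_D} \le c\Vert\eta\Vert_{W^{1,q}}\Vert\psi\Vert_{W^{1,q}_D}$. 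To get the stronger mapping property into $X$ with domain $dom_X(A_\rho)$, one writes, for $\psi \in dom_X(A_\rho)$, the product rule $A_{\eta\rho}\psi = \eta A_\rho\psi - \rho\nabla\eta\cdot\nabla\psi$ in the sense of distributions (testing against $C^\infty_D$): the first term $\eta A_\rho\psi$ is a product of an $L^\infty$ function with an element of $X$, and the second term $\rho\nabla\eta\cdot\nabla\psi \in L^{q/2} \hookrightarrow X$ by Lemma \ref{t-propX}(i), using $\nabla\eta \in L^q$ and $\nabla\psi \in L^q$ (the latter because $dom_X(A_\rho) \hookrightarrow W^{1,q}_D$ by Lemma \ref{l-intreg}(ii), since $1 \in (X,dom_X(A_\rho))_{\varsigma,\infty}$ for appropriate $\varsigma$, or more directly since $dom_X(A_\rho) = dom_X((\underline A_\rho+1)^{\frac12(1+3/q)})\cap X = W^{1,q}_D$).

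For the bound on $\Vert\eta A_\rho\psi\Vert_X$ I need that multiplication by $\eta \in W^{1,q}$ is a bounded operator on $X = [L^q, W^{-1,q}_D]_{3/q}$ with norm controlled by $\Vert\eta\Vert_{W^{1,q}}$. This follows by interpolation: multiplication by $\eta$ is bounded on $L^q$ with norm $\Vert\eta\Vert_\infty \le c\Vert\eta\Vert_{W^{1,q}}$, and it is bounded on $W^{-1,q}_D$ with norm $\le c\Vert\eta\Vert_{W^{1,q}}$ (this last point is the duality of the fact that multiplication by $\eta$ maps $W^{1,q'}_D$ to itself boundedly, which holds because $\eta\nabla\vartheta + \vartheta\nabla\eta \in L^{q'}$ using $\eta \in L^\infty$, $\nabla\eta \in L^q$, $\vartheta \in L^{q''}$ via Sobolev embedding $W^{1,q'} \hookrightarrow L^{q''}$ with $\frac1{q''} = \frac1{q'} - \frac13$ and Hölder — since $q > 3$ gives $q' < 3/2$ so this embedding is available), and then complex interpolation gives boundedness on $X$ with the product bound. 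Assembling these pieces yields $\Vert A_{\eta\rho}\psi\Vert_X \le c\Vert\eta\Vert_{W^{1,q}}\Vert\psi\Vert_{dom_X(A_\rho)}$, which is (i); linearity in $\eta$ is obvious, and $dom_X(A_\rho) \subseteq dom_X(A_{\eta\rho})$ is immediate from the mapping property. The constant depends only on $\Omega$, $D$, $\rho$ through the Sobolev constants, the ellipticity/boundedness bounds of $\rho$, and the isomorphism constant of $A_\rho$.

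For (ii), assume in addition $\eta \ge \eta_- > 0$. Then $\eta\rho$ is itself an elliptic coefficient function, and by Remark \ref{r-kommentar}(ii) (the $\eta$ strictly positive and — here we additionally have $\eta$ uniformly continuous since $W^{1,q} \hookrightarrow C^\beta$ — so $A_{\eta\rho}: W^{1,q}_D \to W^{-1,q}_D$ is again a topological isomorphism). Therefore part (i) applies symmetrically with the roles of $\rho$ and $\eta\rho$ interchanged: using the coefficient $\eta^{-1} \in W^{1,q}$ (note $\eta^{-1} \in W^{1,q}$ because $\nabla(\eta^{-1}) = -\eta^{-2}\nabla\eta \in L^q$ thanks to the lower bound on $\eta$ and $\eta^{-1} \in L^\infty$), we get $A_\rho = A_{\eta^{-1}(\eta\rho)}$ bounded from $dom_X(A_{\eta\rho})$ into $X$. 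Hence $dom_X(A_{\eta\rho}) \subseteq dom_X(A_\rho)$, and combined with the inclusion from (i) the two domains coincide with equivalent graph norms. The main obstacle I anticipate is the $W^{-1,q}_D$-boundedness of multiplication by $\eta$ — making the duality argument precise requires being careful that $W^{1,q'}_D$ is stable under multiplication by $W^{1,q}$ functions, i.e. controlling the cross term $\vartheta\nabla\eta$, which is exactly where the restriction $q < 4$ (equivalently the Sobolev exponent bookkeeping, $q' > 4/3$) and $q > 3$ both get used; everything else is routine bookkeeping with interpolation and the product rule.
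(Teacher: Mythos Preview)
Your argument is correct and, for part (ii), literally matches the paper's proof: apply Remark \ref{r-kommentar}(ii) to see that $A_{\eta\rho}$ is again an isomorphism on $W^{1,q}_D$, then swap $\rho \leftrightarrow \eta\rho$ and $\eta \leftrightarrow \eta^{-1}$ in (i).

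For part (i) the paper simply cites \cite[pp.~1384/1385]{hal/reh} for the estimate $\Vert A_{\eta\rho}\psi\Vert_X \le c\Vert\eta\Vert_{W^{1,q}}\Vert\psi\Vert_{dom_X(A_\rho)}$, so you have essentially reconstructed the referenced argument: the product-rule splitting $A_{\eta\rho}\psi = \eta A_\rho\psi - \rho\nabla\eta\cdot\nabla\psi$, the interpolation of the multiplier $\eta$ between $L^q$ and $W^{-1,q}_D$ (via duality and the stability of $W^{1,q'}_D$ under multiplication by $W^{1,q}$), and the embedding $L^{q/2}\hookrightarrow X$ for the cross term. One small slip: you write $dom_X(A_\rho) = W^{1,q}_D$, but this is only an embedding $dom_X(A_\rho)\hookrightarrow W^{1,q}_D$ (the domain of the full power sits inside the domain of the fractional power $\frac12(1+3/q)<1$, which equals $W^{1,q}_D$ by Lemma \ref{l-intreg}); the embedding is all you use, so the argument stands. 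Also, the restriction $q<4$ is not actually needed for the multiplier estimate on $W^{1,q'}_D$ --- your H\"older bookkeeping only uses $q>3$ --- so that remark is superfluous here.
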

\begin{proof}
i) in \cite[pp. 1384/1385]{hal/reh}, it is proved that
\begin{equation} \label{e-estJDE}
\|\underline A_{\eta \rho} \psi \|_{\underline X} \le   c \|\eta\|_{W^{1,q}_D}\|\psi
\|_{dom_{\underline X}(\underline A_\rho)}, \quad \psi \in dom_{\underline X}(\underline A_\rho),
\end{equation}
for some constant $c >0$.
The proof immediately carries over to the case of real spaces.\\
ii) The properties of $\eta $ guarantee that also 
\[
A_{\eta \rho}:W^{1,q}_D \to W^{-1,q}_D
\]
is a toplogical isomorphism, cf.\ Remark \ref{r-kommentar}. Thus, 
the result is obtained by replacing 
 $\rho$ by $\eta \rho$ in i) and, afterwards, $\eta$ by $\frac {1}{\eta}$.
\end{proof}

\begin{lem} \label{l-maxparpert}
Assume that $f_1,f_2, \eta_1, \eta_2  \in W^{1,q}$ and suppose that $\eta_1, \eta_2$
are bounded functions with strictly positive lower bounds. 
			\renewcommand{\labelenumi}{\roman{enumi})}
			\begin{enumerate}
\item
Then
\begin{equation} \label{e-maxparpert}
dom_{\mathbf X}\left(
(\mathrm{Id} + P^{-1}[f])
\left(
\begin{array}{cc}
A_{\eta_1 \mu_1} & 0 \\
0 &  A_{\eta_2 \mu_2}
\end{array}
\right) \right) =
dom_{\mathbf X} \left(
\begin{array}{cc}
A_{ \mu_1} & 0\\
0 &  A_{\mu_2}
\end{array}
\right) ,
\end{equation}
\item
and, moreover, the operator
\[
(\mathrm{Id} + P^{-1}[f])
\left(
\begin{array}{cc}
A_{\eta_1 \mu_1} & 0 \\
0 &  A_{\eta_2 \mu_2}
\end{array}
\right)
\]
has  maximal parabolic regularity on $\mathbf X$.
\end{enumerate}
\end{lem}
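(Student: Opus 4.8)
The plan is to treat the operator $\mathcal A := (\mathrm{Id}+P^{-1}[f])\,\mathcal A_0$ with $\mathcal A_0 := \operatorname{diag}(A_{\eta_1\mu_1},A_{\eta_2\mu_2})$, first identifying its domain with $\mathcal D_\mu$, and then obtaining maximal parabolic regularity from that of $\mathcal A_0$ via a relatively compact perturbation argument.

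For part i) I would first note that $\mathrm{Id}+P^{-1}[f]$ is a \emph{bounded} operator on $\mathbf X$. Since $q>3$, the functions $f_k\in W^{1,q}$ lie in $C(\overline\Omega)$, so multiplication by $f_k$ is bounded on $L^q$ and — by duality, multiplication by $f_k$ being bounded on $W^{1,q'}_D$ — on $W^{-1,q}_D$; by interpolation it is bounded on $X=[L^q,W^{-1,q}_D]_{3/q}$, and hence $[f]$ is bounded on $\mathbf X$. Moreover $X\hookrightarrow W^{-1,q}_D$ and $P^{-1}$ maps $W^{-1,q}_D$ into $W^{1,q}_D\hookrightarrow X$ by Assumption \ref{assu:iso}, so $P^{-1}[f]$, and therefore $\mathrm{Id}+P^{-1}[f]$, is bounded on $\mathbf X$. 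On the other hand, Lemma \ref{t-multipl} ii) (applicable because $A_{\mu_k}\colon W^{1,q}_D\to W^{-1,q}_D$ is an isomorphism by Assumption \ref{assu:iso} and $\eta_k$ has a strictly positive lower bound) gives $dom_X(A_{\eta_k\mu_k})=dom_X(A_{\mu_k})$ with equivalent graph norms, i.e. $dom_{\mathbf X}(\mathcal A_0)=\mathcal D_\mu$. Since composing on the left with a bounded operator defined on all of $\mathbf X$ does not change the domain, $dom_{\mathbf X}(\mathcal A)=dom_{\mathbf X}(\mathcal A_0)=\mathcal D_\mu$, which is \eqref{e-maxparpert}.

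For part ii), the first step is that $\mathcal A_0$ itself has maximal parabolic regularity on $\mathbf X$: each $\eta_k\mu_k$ is again an elliptic coefficient function (as $\eta_k$ is bounded with a strictly positive lower bound), so Corollary \ref{t-zwischenreell}, applied with $\theta=\tfrac 3q\in\,]0,1[$ (here $3<q<4$), yields maximal parabolic regularity of $A_{\eta_k\mu_k}$ on $X=[L^q,W^{-1,q}_D]_{3/q}$, and Remark \ref{r-ebed} vii) promotes this to the block-diagonal operator $\mathcal A_0$ on $\mathbf X=X\oplus X$. The second step is to view $\mathcal A$ as the perturbation $\mathcal A_0+T$ with $T:=(P^{-1}[f])\mathcal A_0$, defined on $dom_{\mathbf X}(\mathcal A_0)=\mathcal D_\mu$, and to invoke the perturbation stability of maximal parabolic regularity, Remark \ref{r-ebed} iv).

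The main obstacle is this last step, since $T$ is \emph{not} small in operator norm, so Remark \ref{r-ebed} iv) does not apply naively; the remedy is to show that $T$ is $\mathcal A_0$-compact, and therefore $\mathcal A_0$-bounded with relative bound $0$ — which is small enough. For the compactness I would factor $T$ through $\mathbf W^{1,q}_D$: the map $\mathcal A_0\colon\mathcal D_\mu\to\mathbf X$ is bounded, $[f]\colon\mathbf X\to\mathbf X$ is bounded, $P^{-1}\colon\mathbf X\to\mathbf W^{1,q}_D$ is bounded (exactly as in part i)), and $\mathbf W^{1,q}_D\hookrightarrow\hookrightarrow\mathbf X$ is compact because $W^{1,q}_D\hookrightarrow\hookrightarrow L^q\hookrightarrow X$. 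Hence $T$ maps graph-norm-bounded subsets of $\mathcal D_\mu$ into relatively compact subsets of $\mathbf X$; by the classical fact that a relatively compact perturbation of a closed operator has relative bound $0$, for every $\varepsilon>0$ there is $C_\varepsilon$ with $\|Tv\|_{\mathbf X}\le\varepsilon\|\mathcal A_0 v\|_{\mathbf X}+C_\varepsilon\|v\|_{\mathbf X}$ for all $v\in\mathcal D_\mu$, and Remark \ref{r-ebed} iv) then gives maximal parabolic regularity of $\mathcal A=\mathcal A_0+T$ on $\mathbf X$. One minor point to keep track of throughout is that the continuity of the $f_k$, the gain of regularity by $P^{-1}$ into $W^{1,q}_D$, and the index $3/q\in\,]0,1[$ all rely on $3<q<4$, exactly as assumed.
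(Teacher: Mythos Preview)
Your argument is correct and follows essentially the same route as the paper: both identify $dom_{\mathbf X}(\mathcal A_0)=\mathcal D_\mu$ via Lemma \ref{t-multipl}, establish maximal parabolic regularity of $\mathcal A_0$ on $\mathbf X$, and then treat $T=P^{-1}[f]\,\mathcal A_0$ as a relatively compact perturbation so that Remark \ref{r-ebed} iv) applies. Your handling of part i) is in fact slightly more direct than the paper's: you simply observe that $\mathrm{Id}+P^{-1}[f]$ is everywhere defined and bounded on $\mathbf X$, so left-composition with it cannot change the domain, whereas the paper deduces domain equality from the relative compactness of $T$ together with \cite[Ch.~IV.1.3]{kato}. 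One point you should make explicit, however: the implication ``$\mathcal A_0$-compact $\Rightarrow$ $\mathcal A_0$-bound $0$'' is \emph{not} a free fact in arbitrary Banach spaces; the paper explicitly uses the reflexivity of $\mathbf X$ (as a complex interpolation space between reflexive spaces) and cites \cite{bind} for precisely this step.
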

\begin{proof}
By Lemma \ref{t-multipl}, one has 
\[
dom_{\mathbf X} \left(
\begin{array}{cc}
A_{ \mu_1} & 0\\
0 &  A_{\mu_2}
\end{array}
\right) =
dom_{\mathbf X} \left(
\begin{array}{cc}
A_{ \eta_1 \mu_1} & 0\\
0 &  A_{\eta \mu_2}
\end{array}
\right), 
\]
and it is clear that the functions $f_k$ act as continuous
 multiplication operators on $X$. Moreover, $P^{-1}:X \to X$ is compact. Hence, the operator
\begin{equation} \label{e-relcomp}
P^{-1} [f]
\left(
\begin{array}{cc}
A_{\eta_1 \mu_1} & 0\\
0 &  A_{\eta_2 \mu_2}
\end{array}
\right)
\end{equation}
is relatively compact with respect to $\left(
\begin{array}{cc}
A_{\eta_1 \mu_1} & 0\\
0 & A_{\eta_2 \mu_2}
\end{array}
\right)$. This implies \eqref{e-maxparpert}, cf. \cite[Ch. IV.1.3]{kato}.\\
ii) The operator $\left(
\begin{array}{cc}
A_{\eta_1 \mu_1} & 0\\
0 &  A_{\eta_2 \mu_2}
\end{array}
\right)$ satisfies maximal parabolic regularity on $\mathbf X$, cf. Proposition \ref{t-zwischen}  and
Remark \ref{r-ebed}. As established in i), \eqref{e-relcomp} is relatively compact with respect to
 $\left(
\begin{array}{cc}
A_{\eta_1 \mu_1} & 0\\
0 &  A_{\eta_2 \mu_2}
\end{array}
\right)$.  Using the reflexivity of $\mathbf X$,  this implies that \eqref{e-relcomp} is relatively
 bounded with respect to   $\left(
\begin{array}{cc}
A_{\eta_1 \mu_1} & 0\\
0 &  A_{\eta_2 \mu_2}
\end{array}
\right)$, and the relative bound may be taken arbitrarily small, cf. \cite{bind}. Having this at hand,
a suitable perturbation theorem applies, cf.\ Remark \ref{r-ebed}.
\end{proof}

\begin{cor}\label{ttrace}
	Let $s,q$ and $\mathbf{Y}_{s,q}$ as in Theorem \ref{mr}. Then, for every function $v \in L^s(J;\mathcal{D}_{\mu_k}) \cap W^{1,s}(J;\mathbf{X})$, by Remark \ref{r-ebed}, we have $v \in C(\overline{J}; \mathbf{Y}_{s,q})$. Moreover, by Corollary \ref{c-interint}, 
	\begin{equation*} 
	(\mathbf X,\mathcal D_\mu)_{1-\frac {1}{s},s} = \mathbf{Y}_{s,q}
	\hookrightarrow (\mathbf X,\mathcal D_\mu)_{1-\frac {1}{s},\infty} \hookrightarrow \mathbf W^{1,q}_D.
	\end{equation*}
	\end{cor}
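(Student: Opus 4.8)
The plan is to chain three ingredients, all already available in the excerpt. First I would apply the embedding from Remark \ref{r-ebed}\,ii) with $Z = \mathbf X$ and $\mathfrak D = \mathcal D_\mu$; here $\mathcal D_\mu$ is, with its graph norm, the domain of the closed, densely defined block-diagonal operator on $\mathbf X$ with entries $A_{\mu_1},A_{\mu_2}$ (closedness and density follow since, by Proposition \ref{t-zwischen}, each $A_{\mu_k}$ has maximal parabolic regularity on $X$, so Definition \ref{d-maxreg} applies). This yields
\[
L^s(J;\mathcal D_\mu)\cap W^{1,s}(J;\mathbf X)\hookrightarrow C\bigl(\overline J;\,(\mathbf X,\mathcal D_\mu)_{1-\frac1s,s}\bigr),
\]
and the target space is precisely $\mathbf Y_{s,q}$ by the notation fixed in Theorem \ref{mr}. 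This gives the first assertion, $v\in C(\overline J;\mathbf Y_{s,q})$.

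It remains to justify the displayed chain of embeddings. The inclusion $(\mathbf X,\mathcal D_\mu)_{1-\frac1s,s}\hookrightarrow(\mathbf X,\mathcal D_\mu)_{1-\frac1s,\infty}$ is the standard monotonicity of the real interpolation functor in its second index (valid since $s<\infty$). For the second inclusion I would invoke Corollary \ref{c-interint} with $\varsigma:=1-\frac1s$, which requires $\varsigma\in\,]\frac12+\frac{3}{2q},1[$. This is exactly where the hypothesis $s>\frac{2q}{q-3}$ of Theorem \ref{mr} enters: it gives $\frac1s<\frac{q-3}{2q}=\frac12-\frac{3}{2q}$, hence $1-\frac1s>\frac12+\frac{3}{2q}$, while trivially $1-\frac1s<1$. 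Composing the two inclusions with the definition $\mathbf Y_{s,q}=(\mathbf X,\mathcal D_\mu)_{1-\frac1s,s}$ produces $\mathbf Y_{s,q}\hookrightarrow\mathbf W^{1,q}_D$, as claimed.

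There is no real obstacle here: the corollary is a bookkeeping consequence of the preceding structural results, the substance having been carried by Lemma \ref{l-intreg} and Corollary \ref{c-interint} (identification of the domains of fractional powers of $\underline A_\rho$ and the resulting Sobolev-type embedding). The only subtlety worth flagging is that Corollary \ref{c-interint} is valid only on the \emph{open} $\varsigma$-interval, so the \emph{strict} inequality $s>\frac{2q}{q-3}$ is genuinely needed; I would simply note that this is part of the standing hypotheses of Theorem \ref{mr}.
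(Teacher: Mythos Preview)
Your proposal is correct and follows exactly the approach the paper intends: the corollary has no separate proof in the paper because the argument is already encoded in the statement via the references to Remark~\ref{r-ebed}\,ii) and Corollary~\ref{c-interint}. You have spelled out precisely those two ingredients, including the verification that $s>\frac{2q}{q-3}$ yields $1-\frac1s\in\,]\frac12+\frac{3}{2q},1[$, which the paper leaves to the reader.
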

Now we are in the position to show Theorem \ref{mr} by applying Proposition \ref{p-pruess}:\\
From Lemmas \ref{t-multipl} and \ref{l-maxparpert} and Corollary \ref{ttrace}, it follows that the operator $\mathcal{A}$ in \eqref{defAcal} is well-defined. In particular, for given $v\in \mathbf{Y}_{s,q} \hookrightarrow \mathbf{W}^{1,q}_D$, $\eta_k(t,v) \in W^{1,q}$ is bounded from above and below by positive constants. Moreover, by Lemma \ref{l-maxparpert},  $\mathcal A(0,\phi(0))$
satisfies maximal parabolic regularity in $\mathbf X$. 
Secondly, using Lemma \ref{t-multipl}, it is not hard to see that $\mathbf{(LA)}$ in Proposition \ref{p-pruess}
 also holds, with the following example of an explicit estimate:
\begin{align*}
	 & \Vert \mathcal{A}(t,v) - \mathcal{A}(t,w) \Vert_{\mathcal{L}(\mathcal{D}_{\mu_k},\mathbf{X})} \\
	& \leq C \max_k \left[ \Vert P^{-1} \Vert_{\mathcal{L}(L^\infty,X)} L_{\tilde{\mathcal{F}}_k'(t)} \Vert v_k - w_k 
\Vert_{L^\infty} C_{\eta_k(t)} \Vert v_k \Vert_{W^{1,q}_D} \right]\\
	&  \quad + C \max_k \left[ (1+ \Vert P^{-1} \Vert_{\mathcal{L}(L^\infty,X)} C_{\tilde{\mathcal{F}}_k'(t)} \Vert w_k 
\Vert_{L^\infty}) L_{\eta_k(t)} \Vert v_k - w_k \Vert_{W^{1,q}_D})  \right] \\
	& \leq C \Vert v - w \Vert_{\mathbf{Y}_{s,q}},
	\end{align*}
	where $L_f$ is a local Lipschitz constant and $C_f$ is a local bound on the real-valued function $f$ and 
$C>0$ is a generic constant that, in particular, contains embedding constants and the constant in \eqref{e-estJDE}. 
Here, we implicitly used the Lipschitz property of $\mathcal{S}: \mathbf{L}^{\infty} \to W_D^{1,q}$, Thm. \ref{t-potentialto}
to have Lipschitz dependence of the coefficient functions $\tilde{\mathcal{F}}_k(t,\cdot), \eta_k(t,\cdot)$ of $v,w$. \\
For the right-hand-side $\mathcal R_{\mathrm{flux}}$ in \eqref{defRflux}, we analogously obtain $\mathbf{(LB)}$ in Proposition \ref{p-pruess}
 by the embedding $L^{q/2} \hookrightarrow X$ in Lemma \ref{t-propX}. \\
For the right-hand-side $\mathcal R_{\mathrm{rec}}$ in \eqref{e-defRrec}, Lipschitz-dependence follows from Assumptions \ref{assu:recomb} and 
\ref{recombGamma}, Lemma \ref{l-Lipsch} and the embeddings in Lemma \ref{t-propX}.  \\
The remaining term $\mathcal{R}_{\mathrm{inh}}$ in \eqref{defRinh} is treated analogously, taking into account Assumptions
 \ref{assu:poi-diri-bv} on the data.  
This proves the first part of Theorem\ \ref{mr}. The second part of Theorem \ref{mr} follows directly from the relations
 \ref{ubyphi} and \ref{tildebyu} of $\phi$ and $u,\varphi, \chi$, together with Thm. \ref{t-potentialto}. Spatial H\"older
 regularity is a consequence of the standard embedding $W^{1,q} \hookrightarrow C^\beta$ for $q>3$. 
The third part of Theorem \ref{mr} is a direct consequence of well-known theory for nonautonomous parabolic problems,
 cf.\ \cite[Thm. 4.3]{prato} and compare also \cite[Cor. 6.1.6]{luna}.

\subsection[Concluding remarks]{Concluding remarks}\label{r-Pi}
%
%%%%%%%%%%%%%%%%%%%%%%%%%%%%%%%%%%%%%%%%%%%%%%%%%%%%%%%%%%%%%%%%%%%%%%%%%%%%%%%%%%%%%%%%%%%%%%%%%
We conclude with a few remarks on direct extensions and open problems associated with the main result. \\
The equations in two spatial dimensions can be analyzed in exactly the same way, leading to an analogous result. Assumption \ref{assu:iso} that restricts the geometric setting and coefficients can then be dropped in the sense that for all bounded, measurable and elliptic coefficient functions, there exists a suitable exponent $q>2$, cf. \cite{jonsson}. \\
	Note that if $r^\Pi \neq 0$, the solution $\phi$ in the main result Theorem\ \ref{mr} will in general not be twice (weakly) differentiable and the regularity in Theorem\ \ref{mr} is optimal in this sense. If $r^\Pi = 0$ and the setting is smooth, e.g.\ $D=\partial \Omega$, the material coefficients $\mu_k, \varepsilon, \varepsilon_\Gamma$ and the boundary and initial data are smooth, then it is straightforward to obtain higher spatial regularity and a strong solution of \eqref{vanRoos} from our method by using elliptic regularity in $L^p$ and a boot-strap argument.\\
The Poisson equation \eqref{Poisson-eq} for the electrostatic potential is sometimes considered on a larger domain than the current-continuity equation \eqref{CuCo-eq}, %with setting the electron and hole densities to zero on the additional part,
 cf. \cite{vanroos2d}. This extension is also possible with our analysis. \\% -- the Poisson equation on 
%larger domain (including oxide) where the carrier densities in fact vanish on a subdomain.
%Since this brings analytically no novelty, but is technically very involved, 
%we did not carry out this here again.
%\begin{rem} \label{r-02} 
%The requirements on the intial values for the (homogenized) quasi Fermi levels
%can be considerably relaxed if one passes to spaces with temporal weights, compare \cite{koehne}.
%Since this would again cause much technical effort we did not carry out this here.
%\end{rem}
%\begin{rem} \label{r-002}
%Of course, 
Finally, it would be interesting to identify the interpolation space
%highly satisfactory if one could identify the 
$[L^q,W^{-1,q}_D]_\tau$ %-- used in our context for the treatment of the parabolic system -- as the (dual) 
with a dual space of Bessel potentials $H^{-\tau,q}_D = \Bigl (H^{\tau,q'}_D \bigr )^*$. 
This is known for more specific geometries, i.e. if $\Omega$ is a Lipschitz domain, 
$D$ is the closure of its interior (within $\partial \Omega$), and the boundary of $D$ (within $\partial \Omega)$ is locally bi-Lipschitzian diffeomorphic to the unit interval,
see \cite{ggkr} and \cite[Ch. 5]{haller}. Under our more general Assumption \ref{spatial},
the proof seems to be a very hard task. 

\small
\noindent
{\sc K. Disser,\\
Weierstrass Institute for Applied Analysis and Stochastics,
Mohrenstr.~39, 
10117 Berlin, 
Germany}  and \\
{\sc TU Darmstadt, IRTG 1529, 
Schlossgartenstr.~7, 
64289 Darmstadt, 
Germany}  \\
{\em E-mail address}\/: {\bf disser@wias-berlin.de, kdisser@mathematik.tu-darmstadt.de} 

\mbox{}

\noindent
{\sc J. Rehberg, \\
Weierstrass Institute for Applied Analysis and Stochastics,
Mohrenstr.~39, 
10117 Berlin, 
Germany}  \\
{\em E-mail address}\/: {\bf rehberg@wias-berlin.de}

\end{document}